\newcommand{\sset}{\mathrm{sSet}}
\newcommand{\stinfty}{\omega \mathrm{Gpd}}
\newcommand{\algkan}{\mathrm{AlgKan}}
\newcommand{\stcom}{\mathrm{sTCom}}
\newcommand{\crcom}{\mathrm{CrCom}}
\newcommand{\ch}{\text{Ch}_\mathbb{Z}^+}
\newcommand{\sabgroup}{\mathrm{sAbGrp}}
\newcommand{\qsset}{\mathcal{sSet}}
\newcommand{\qstcom}{\mathcal{sTCom}}
\newcommand{\C}{\mathcal{C}}
\newcommand{\D}{\mathcal{D}}
\newcommand{\lf}{\mathcal{L}}
\newcommand{\rf}{\mathcal{R}}
\renewcommand{\phi}{\varphi}
\newcommand{\stalg}{\text{St}_{\text{Alg}}}
\newcommand{\ualg}{\text{U}_{\text{Alg}}}
\newcommand{\st}{\text{St}}
\newcommand{\ust}{\text{U}_{\st}}
\newcommand{\abtcom}{\text{Ab}_{\text{sT}}}
\newcommand{\utcom}{\text{U}_{\text{sT}}}
\newcommand{\abcomplex}{\text{Ab}_{\text{Cr}}}
\newcommand{\ucomplex}{\text{U}_{\text{Cr}}}
\newcommand{\doldnormalizer}{\text{N}_\mathbb{Z}}
\newcommand{\doldsset}{\Gamma_\mathbb{Z}}
\newcommand{\nonabdoldnormalizer}{\text{N}_{\crcom}}
\newcommand{\nonabdoldsset}{\Gamma_{\crcom}}
\newcommand{\tz}{\tilde{\mathbb{Z}}}
\newcommand{\tabcomplex}{\tilde{\text{Ab}}_{\text{Cr}}}
\newcommand{\freecoalg}{\mathcal{F}_{\st}}
\begin{document}
\author{Kimball Strong}
\address{Department of Mathematics, Cornell University}
\eaddress{ks2424@cornell.edu}
\subjclass{18N30, 18N10, 55U10}
\keywords{strictification, infinity-groupoid, omega-groupoid, comonadic}
\thanks{
The author would like to thank his advisor, Inna Zakharevich. 
He would also like to thank Charles Rezk for his insights and encouragement, 
and for originally posing the question answered by the title of this paper.
The author was partially supported by NSF grant DMS-2052977 during the writing of this paper.}
\title{Strictification of $\infty$-groupoids is comonadic}
\maketitle
\begin{abstract}
	We investigate the universal strictification adjunction from 
	weak infininity-groupoids (modeled as simplicial sets) to 
	``strict infinity-groupoids,'' more commonly called ``omega-groupoids.'' 
	Modeling these with simplicial T-complexes, 
	we prove that any simplicial set can be recovered 
	up to weak homotopy equivalence as the totalization 
	of its canonical cosimplicial resolution induced 
	by this adjunction. 
	We explain how this generalizes the fact due to Bousfield and Kan 
	that the homotopy type of a simply connected space can be recovered 
	as the totalization of its canonical cosimplicial resolution induced 
	by the free simplicial abelian group adjunction. 
	Furthermore, we leverage this result to show that 
	this strictification adjunction induces a comonadic adjunction 
	between the quasicategories of simplicial sets and omega-groupoids.
\end{abstract}

\tableofcontents
\section{Introduction}	
A basic goal of classical algebraic topology is to construct 
algebraic invariants of homotopy types: 
functors $F: \text{Ho}(\text{Top}) \to \mathcal{A}$ 
where $\text{Ho}(\text{Top})$ is the category of topological spaces 
localized at the weak equivalences and 
$\mathcal{A}$ is some category of algebraic objects, e.g. groups or rings. 
Grothendieck's \textit{Homotopy Hypothesis} is the statement that 
there is a \textit{complete} such invariant generalizing the notion of groupoid, 
which he called an $\infty$-groupoid. 
By ``complete'' we mean that there is a notion of ``weak equivalence'' 
for $\infty$-groupoids, and that there is an equivalence of categories
$$ \text{Ho}(\text{Top}) \simeq \text{Ho}(\infty\text{-Groupoid}) $$
The idea of an $\infty$-groupoid is based on axiomatizing 
the algebraic structure that the points, paths, homotopies, 
homotopies between homotopies, etc. in a topological space carry. 
Unfortunately, this information is too unwieldy to work with directly---
Grothendieck provided a complete definition, 
but no one has yet been able to prove or disprove 
that it leads to the desired equivalence of categories. 
Various alternate definitions along the same lines have been proposed, 
see for instance \citep{Cisinski_2006, Henry_2016}.
\\
\indent The enormous success of simplicial methods in homotopy theory led to a solution of a different sort:
taking ``$\infty$-groupoid'' to mean ``Kan complex.'' 
Kan complexes certainly satisfy the condition of modeling the homotopy theory of spaces.
If one is interested not necessarily in understanding \
Grothendieck's algebraic vision of ``$\infty$-groupoids'' 
and is mainly looking for some convenient object with which to do homotopy theory, 
there is no reason to seek anything else. 
However, Kan complexes do not entirely fit the bill 
of axiomatizing the algebraic data in a space: 
they are not particularly algebraic 
(in the sense of being equipped with operations satisfying certain relations). 
One can define ``algebraic Kan complexes,'' 
in which horn filling is an operation (rather than a property); 
in \cite{Nikolaus2011} it is shown that these model spaces. 
This still falls somewhat short of the original vision---
Kan complexes effectively hide much of the complicated nature of $\infty$-groupoids 
by relegating it to simplicial combinatorics. 
For instance: $S^2$ is the ``free $\infty$-groupoid on a single $2$-cell.'' 
Interpreting ``free $\infty$-groupoid on a $2$-cell'' in Kan complexes 
gives a model for the $2$-sphere, and while in principle you can compute 
anything you like from this, nothing is  ``intrinsically obvious.'' 
By contrast, in the weak $3$-groupoid model 
(which one can use to model the homotopy $3$-type of $S^2$), 
you can obtain that $\pi_3(S^2) \cong \mathbb{Z}$ directly from the axioms: 
the generator comes from a coherence $3$-cell which expresses 
homotopy-commutativity of composition of $2$-cells 
(this is an axiomatization of the Eckmann--Hilton argument). 
Essentially, more homotopical information is encoded 
``by hand'' into the axioms of a weak $3$-groupoid than in Kan complexes. 
\\
\indent Unfortunately, encoding homotopical information directly into axioms 
for weak $n$-groupoids becomes infeasible as $n$ gets larger. 
Fortunately, one can still retain a significant amount of information 
by working with ``\textit{strict}'' $\infty$-groupoids, 
which we for clarity shall refer to exclusively as $\omega$-groupoids, 
which are much easier to define and manipulate:
\begin{definition}
An \textbf{$\omega$-groupoid} is a sequence of sets
\begin{center}
\begin{tikzcd}[sep = huge]
	X_0 
		\ar[r, "{\id}"] 
	& X_1 
		\ar[r, "{\id}"] 
		\ar[l, "s", shift left = 2,bend left = 15] 
		\ar[l,"t" swap, shift right = 2, bend right = 15] 
	& X_2 
		\ar[r, "{\id}"] 
		\ar[l, "s", shift left = 2, bend left = 15] 
		\ar[l,"t" swap, shift right = 2, bend right = 15]   
	& \cdots 
		\ar[l, "s", shift left = 2, bend left = 15] 
		\ar[l,"t" swap, shift right = 2, bend right = 15] 
\end{tikzcd}
\end{center}
such that each diagram
\begin{center}
\begin{tikzcd}[sep = huge] 
	X_i 
		\ar[r, "{\id^k}"]
	& X_{i+k} 
		\ar[l, "s^k", shift left = 2, bend left = 15] 
		\ar[l,"t^k" swap, shift right = 2, bend right = 15] 
\end{tikzcd}
\end{center}
is equipped with the structure of a groupoid, and such that these are compatible in the sense that 
\begin{center}
\begin{tikzcd}[sep = huge]
	X_i \
		\ar[r, "{\id^k}"] 
	& X_{i+k} 
		\ar[r, "{\id^j}"] 
		\ar[l, "s^k", shift left = 2, bend left = 15] 
		\ar[l,"t^k" swap, shift right = 2, bend  right = 15]  
	& X_{i+k+j} 
		\ar[l, "s^j", shift left = 2, bend left = 15] 
		\ar[l,"t^j" swap, shift right = 2, bend  right = 15] 
\end{tikzcd}
\end{center}
is a strict $2$-groupoid. 
A map between $\omega$-groupoids is a map of diagrams which 
preserves all the groupoidal structure. 
The resulting category we notate as $\stinfty$.
\end{definition}
One can define a functor $\text{Strict}: \sset \to \stinfty$, 
but it is well known that $\omega$-groupoids are not a complete invariant for homotopy types 
(see \citep{Ara_2013} for an overview of how much homotopical information they can model; 
essentially it is a mixture of the fundamental group and higher homological information). 
They nonetheless provide a useful concept while trying to understand 
the general problem of constructing algebraic invariants, 
in particular the problem of giving a convenient definition of $\infty$-groupoids. 
We think of the functor $\text{Strict}: \sset \to \stinfty$ as giving the 
``strictification'' of an $\infty$-groupoid 
(presented up to homotopy equivalence by a simplicial set). 
It has a right adjoint $U: \stinfty \to \sset$.
\\
\indent The main goal of this paper is to examine this strictification functor, 
and in particular to prove that it induces a comonadic adjunction of quasicategories. 
One can think of these results as providing a complete algebraic model for homotopy types---
coalgebras in $\stinfty$ are a working model for $\infty$-groupoid.
The weak (and somewhat imprecise) form of what we will prove is:
\begin{theorem}
	The homotopy type of a space $X$ is determined by 
	the homotopy type of its strictification $\text{Strict}(X)$, 
	along with a natural coalgebra structure over the comonad 
	induced by the adjunction $\text{Strict} \dashv U$.
\end{theorem}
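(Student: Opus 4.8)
The plan is to read this ``weak form'' as the statement that the canonical comparison functor $\Phi\colon \qsset \to \operatorname{coAlg}_{\bot}(\qstcom)$, $X \mapsto \bigl(\text{Strict}(X), \text{Strict}(\eta_X)\bigr)$, is fully faithful — where $\bot = \text{Strict}\circ U$ is the induced comonad on $\qstcom$, $\eta$ is the unit of $\text{Strict}\dashv U$, and $\operatorname{coAlg}_{\bot}(\qstcom)$ is the quasicategory of $\bot$-coalgebras — and to derive this formally from the totalization theorem already established; the sharper statement advertised in the abstract is that $\Phi$ is an equivalence. The first step is bookkeeping: promote the $1$-categorical adjunction $\text{Strict}\dashv U$ to an adjunction of quasicategories $\text{Strict}\colon\qsset\rightleftarrows\qstcom\colon U$, which I take to be in place from the construction of $\qstcom$, so that $\bot$, its coalgebras, and $\Phi$ make sense $\infty$-categorically and the induced comonad agrees with the naive one.

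Since $\qsset$ is presentable it admits all totalizations, so $\Phi$ has a right adjoint $\Psi$ (the $\infty$-categorical dual of the classical fact that a monadic comparison functor acquires a left adjoint when its target is suitably cocomplete). A standard computation identifies $\Psi\Phi(X)$, naturally in $X$, with the totalization of the cosimplicial object $[n]\mapsto (U\text{Strict})^{n+1}X$ — exactly the canonical cosimplicial resolution of $X$ induced by $\text{Strict}\dashv U$ — under which the unit $X\to\Psi\Phi(X)$ becomes the coaugmentation. The totalization theorem says precisely that this coaugmentation is a weak homotopy equivalence; hence the unit of $\Phi\dashv\Psi$ is an equivalence, i.e.\ $\Phi$ is fully faithful. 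This is the theorem as stated: $X$ is recovered, naturally and with all its mapping spaces intact, from $\text{Strict}(X)$ together with its coalgebra structure. (One can equally run the argument on mapping spaces directly: resolve $\Phi(Y)$ by cofree coalgebras, use the two adjunctions to rewrite $\operatorname{Map}_{\operatorname{coAlg}_{\bot}(\qstcom)}(\Phi X,\Phi Y)$ as $\operatorname{Tot}_n\operatorname{Map}_{\qsset}(X,(U\text{Strict})^{n+1}Y)$, and collapse it via the theorem to $\operatorname{Map}_{\qsset}(X,Y)$.)

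For the sharper statement — $\Phi$ an equivalence, i.e.\ $\text{Strict}$ comonadic — I would invoke the $\infty$-categorical Barr--Beck--Lurie theorem applied to $\text{Strict}^{\mathrm{op}}$ (a right adjoint): it is enough that $\text{Strict}$ be conservative and that $\qsset$ admit, and $\text{Strict}$ preserve, totalizations of $\text{Strict}$-split cosimplicial objects. Conservativity is an immediate corollary of the totalization theorem: if $\text{Strict}(f)$ is an equivalence then $(U\text{Strict})^{n+1}(f)$ is an equivalence for every $n$ (every functor of quasicategories preserves equivalences), hence so is $\operatorname{Tot}\bigl((U\text{Strict})^{\bullet+1}f\bigr)$, and naturality of the coaugmentation forces $f$ itself to be an equivalence. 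Existence of the totalizations comes for free from presentability of $\qsset$.

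The remaining clause is where I expect the genuine difficulty to lie. As a left adjoint $\text{Strict}$ need not preserve limits, so one must show by hand that it preserves the totalization of a $\text{Strict}$-split cosimplicial object; the idea is to use the splitting — an extra codegeneracy in $\qstcom$ — to present such a totalization, after passing through $U$, as an absolute limit that $\text{Strict}$ does commute with, once more leaning on the totalization theorem for the control over $U\text{Strict}$. Together with the previous steps this gives comonadicity of $\text{Strict}$ and hence $\qsset \simeq \operatorname{coAlg}_{\bot}(\qstcom)$. In short: once the $\infty$-adjunction is in place and the comonad is identified with the naive one, the totalization theorem does essentially all of the work for the stated theorem, with conservativity a quick byproduct; the extra effort for full comonadicity is concentrated entirely in the preservation-of-split-totalizations clause.
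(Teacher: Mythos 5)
Your formal skeleton is sound and, on the comonadicity layer, close to the paper's: you read the weak form as full faithfulness of the comparison functor $\Phi$ into $\bot$-coalgebras, which follows once the unit $X \to \Psi\Phi(X) \simeq \holim\,\bigl((\ust\st)^{\bullet+1}(X)\bigr)$ is an equivalence. The genuine gap is that you treat that last statement --- the totalization theorem --- as ``already established,'' when it is the entire mathematical content of the claim being proved; nothing in your write-up addresses \emph{why} the coaugmented cosimplicial resolution converges back to $X$. The paper proves this (Theorem \ref{thm:main-theorem-basic}) by a decidedly non-formal argument: reduce to connected $X$; form the fibre sequence $\tilde{X} \to X \to X_{\le 1}$ with $X_{\le 1}$ the $1$-truncation and $\tilde{X}$ the universal cover; show that $\ust\st$ carries such fibre sequences to fibre sequences; note that for the $1$-type the unit is already a weak equivalence (strict $\infty$-groupoids model $1$-types); identify $\ust(\st(\tilde{X}))$ with $\tz[\tilde{X}]$ in the simply connected case via the crossed-complex/chain-complex comparison, so that the resolution of $\tilde{X}$ becomes the Bousfield--Kan $\mathbb{Z}$-resolution, whose convergence for nilpotent spaces is the classical input; then compare the fibre sequences of totalizations. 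Without some version of this (or another convergence argument), your proposal amounts to ``the main theorem implies the main theorem.''

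Two smaller points on the formal layer. First, your derivation of conservativity from the totalization theorem (levelwise equivalence of resolutions plus naturality of the coaugmentation) is valid and arguably cleaner than the paper's, which instead uses the identification $\pi_n(\ust(\st(X))) \cong H_n(\tilde{X})$ for $n \ge 2$ together with the classical recognition theorem that a map inducing isomorphisms on $\pi_0$, $\pi_1$, and homology of universal covers is a weak equivalence. Second, the step you flag as the ``genuine difficulty'' --- showing that $\st$ preserves $\st$-split totalizations --- is not needed on the paper's route: it invokes a lemma asserting that for a conservative left adjoint $L \dashv R$ of quasicategories, comonadicity is equivalent to $X \to \lim_\Delta (RL^{\bullet+1}X)$ being an equivalence for every $X$. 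So conservativity plus the totalization theorem already yield the strong form, and the effort you reserve for split totalizations should instead go into the convergence argument itself.
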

The stronger form is 
\begin{theorem}
	The strictification functor is comonadic on the level of quasicategories: 
	that is, it induces an equivalence of quasicategories between spaces 
	and the quasicategory of coalgebras for the comonad $\text{Strict} \circ U$ on $\stinfty$.
\end{theorem}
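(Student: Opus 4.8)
The plan is to exhibit the comparison functor to coalgebras as an equivalence of quasicategories; this will follow from the recovery theorem together with one further property of $\text{Strict}$, which I isolate below. Write $F = \text{Strict}\colon \qsset \to \qstcom$ for strictification at the level of quasicategories, where $\qsset$ is the quasicategory of spaces and $\qstcom$ that of strict $\infty$-groupoids; let $G = U$ be its right adjoint and $T = FG$ the induced comonad on $\qstcom$. Let
\[
K\colon \qsset \longrightarrow \operatorname{coAlg}_T(\qstcom),\qquad X\longmapsto\bigl(\text{Strict}(X),\ \text{its canonical $T$-coalgebra structure}\bigr),
\]
so that $U_T\circ K = F$ for the forgetful functor $U_T$. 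Since $\qsset$ is complete, $K$ admits a right adjoint $K^R$, given by a totalization, $K^R(Z)\simeq\Tot\bigl([n]\mapsto (GF)^n\,G\,U_T Z\bigr)$ (with its canonical cosimplicial structure), and satisfying $K^R\circ(\text{cofree coalgebra functor})\simeq G$. Comonadicity of $\text{Strict}$ is exactly the assertion that $K$ is an equivalence, which amounts to showing that the unit $\id\to K^R K$ and the counit $KK^R\to\id$ of the adjunction $K\dashv K^R$ are both equivalences.

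The unit is handled immediately by the work already done. Unwinding the definitions, $K^R K(X)\simeq\Tot\bigl([n]\mapsto (GF)^{n+1}X\bigr)$ is exactly the totalization of the canonical cosimplicial resolution of $X$ induced by $\text{Strict}\dashv U$, and the unit component $X\to K^R K(X)$ is its coaugmentation; so the unit is an equivalence by the recovery theorem. As a byproduct $K$ is fully faithful, hence conservative, and since $U_T$ is conservative so is $\text{Strict}=U_T\circ K$; thus, for a reader who prefers to phrase the rest via the $\infty$-categorical comonadic Barr--Beck theorem, every hypothesis other than preservation of split totalizations is now in place, and what follows supplies exactly that.

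The outstanding step---the counit $\varepsilon_Z\colon KK^R(Z)\to Z$ being an equivalence for every $T$-coalgebra $Z$---is the main obstacle. Unwinding once more: applying $\text{Strict}$ to the cosimplicial object $C^\bullet=\bigl([n]\mapsto (GF)^n\,G\,U_T Z\bigr)$ underlying $K^R(Z)$ yields the cofree cobar resolution $[n]\mapsto T^{n+1}U_T Z$ of the coalgebra $Z$, which carries extra codegeneracies coming from the counit of the comonad $T$ and is therefore split, with totalization $U_T Z$; and since $U_T$ is conservative, $\varepsilon_Z$ is an equivalence if and only if the canonical comparison $\text{Strict}\bigl(\Tot C^\bullet\bigr)\to\Tot\,\text{Strict}(C^\bullet)$ is an equivalence---equivalently, if and only if $\text{Strict}$ preserves the totalization of the $\text{Strict}$-split cosimplicial object $C^\bullet$. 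To prove this I would follow the pattern of Lurie's proof of the $\infty$-categorical Barr--Beck theorem, exploiting the splitting of $\text{Strict}(C^\bullet)$ together with conservativity of $U$---which should itself be immediate from the construction of the quasicategory (or of a model structure) on strict $\infty$-groupoids---and the identity $K^R\circ(\text{cofree})\simeq G$ that pins down $K^R$ on cofree coalgebras. I expect this verification, rather than the conservativity of $U$, to carry the genuine content; it is the analogue in the present setting of the convergence input behind the classical Bousfield--Kan result, with $\text{Strict}$ in place of the free simplicial abelian group functor.
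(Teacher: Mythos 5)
Your overall strategy is the same as the paper's: both arguments run through the $\infty$-categorical Barr--Beck formalism with Theorem \ref{thm:main-theorem-basic} (convergence of the unit resolution $X \to \holim \st^\bullet(X)$) as the essential input. Two remarks on where you diverge. First, your derivation of conservativity of $\st$ is a genuine (small) improvement: the paper proves Lemma \ref{lem:strictification-conservative} directly via Lemma \ref{lem:homology-strictification-isomorphism} and a Whitehead-type theorem (a map inducing isomorphisms on $\pi_0$, $\pi_1$, and homology of universal covers is a weak equivalence), whereas you observe that full faithfulness of the comparison functor $K$ --- which is exactly the unit statement, i.e.\ the recovery theorem --- already forces $\st = U_T \circ K$ to be conservative since the forgetful functor $U_T$ from coalgebras is. That shortcut is valid and avoids the classical input entirely.

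Second, and this is where the proposal stops short: the counit step. You correctly reduce it to showing that $\st$ preserves the totalization of the $\st$-split cosimplicial object $C^\bullet = (\rf\lf)^\bullet \rf\, U_T Z$, but then defer the verification to ``following the pattern of Lurie's proof,'' which is precisely the step that carries categorical content and is not a formality. The paper closes this gap not by carrying out that verification but by citing a packaged lemma (\cite{Holmberg-Peroux_2020}, Prop.\ 6.1.4): for an adjunction $L \dashv R$ of quasicategories with $L$ \emph{conservative}, comonadicity is equivalent to the convergence of the unit resolutions $X \to \lim_\Delta R L^{\bullet+1} X$ for all $X$. You have already established both hypotheses of that lemma (conservativity of $\st$, by your byproduct observation, and convergence, by the recovery theorem), so your argument is completable by a citation rather than by reproducing Lurie's split-totalization analysis. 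Note also a small misdirection in your closing paragraph: the conservativity relevant to the comonadic Barr--Beck theorem is that of the \emph{left} adjoint $\st$, not of $U$; conservativity of $U$ is indeed immediate from the model structure on $\stcom$ (weak equivalences are detected on underlying simplicial sets), but it is not the hypothesis that does the work here.
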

These appear more precisely as Theorems 
\ref{thm:main-theorem-basic} and \ref{thm:main}, respectively.
\\
\indent In relation to previous work, this is a generalization of 
the main result on homotopy categories in \citep{Blomquist_Harper_2019}, 
in which the authors prove an analagous result 
for simply connected spaces and chain complexes. 
As simply connected chain complexes are equivalent to 
simply connected strict $\infty$-groupoids, 
restricting our result to the simply connected case recovers their main result.\footnote{
	One advantage of the approach by the authors there 
	is that they work directly with a simplicially enriched category of coalgebras, 
	whereas we utilize the Barr-Beck-Lurie theorem, 
	which lets us avoid explicitly constructing a category of coalgebras.
}
Our Theorem \ref{thm:main-theorem-basic} is a direct generalization 
of the fact that the $\mathbb{Z}$-completion 
(in the sense of Bousfield-Kan) of a nilpotent space 
is itself to the non-nilpotent case. 
Another closely related result is in \citep{Rivera_Wierstra_Zeinalian_2022}, 
where the authors succeed in giving an equivalence of homotopy theories between 
localizations of spaces at algebraically closed fields 
and simplicial coalgebras (with no simple connectedness assumptions). 
\\
\indent In order to arrive at our result we will actually do 
very little directly with $\omega$-groupoids, 
or with topological spaces. Instead, we will work with simplicial models: 
instead of topological spaces we will work with simplicial sets, 
and instead of $\omega$-groupoids we will work with simplicial $T$-complexes, 
which are a certain sort of Kan complex which form a category 
equivalent to the category of $\omega$-groupoids.
\section{Technical background}
\subsection{Algebraic Kan complexes}
The category of simplicial sets models the homotopy theory of $\infty$-groupoids 
under the model structure in which the fibrant-cofibrant objects are Kan complexes 
and weak equivalences are those maps which induce isomorphisms 
on all homotopy groups, for any choice of basepoint.
As the composition operation in a Kan complex is a relation rather than a function 
(that is, compositions are asserted to exist rather than given as the output of a composition function), 
it is difficult to work directly with Kan complexes to provide 
a ``strictification'' of their $\infty$-groupoid structure. 
Therefore, we work with the notion of an 
\textbf{algebraic Kan complex,} due to \citep{Nikolaus2011}.
\begin{definition}[\citep{Nikolaus2011}, definition 3.1]
	The category $\text{AlgKan}$ is defined as follows: 
\begin{itemize}
	\item The objects are simplicial sets $X$ such that for every diagram 
\begin{center}
	\begin{tikzcd}
	\Lambda^n_k \ar[d, "\iota^n_k"] \ar[r, "h"] & X \\
	\Delta^n 
	\end{tikzcd}
\end{center}

there is a \emph{chosen} horn filler $\fill_X(h): \Delta^n \to X$ 
which makes the diagram commute. 
In other words, objects are pairs $(X, \fill_X)$ 
where $\fill_X$ is a function from horns of $X$ to horn fillers. 
For a horn $h: \Lambda^n_k \to X$, 
we call $d_k(\fill_X(h))$ the \emph{composition} of the horn $h$.
We call the simplices in the image of $\fill_X$ \emph{distinguished fillers}.
\item The morphisms are morphisms of underlying simplicial sets 
which preserve the fillings: 
a morphism $f : (X, \fill_X) \to (Y, \fill_Y)$ 
is a map of simplicial sets $f : X \to Y$ 
such that for any horn $h: \Lambda^n_k \to X$, 
$$f(\fill_X(h)) = \fill_Y(f \circ h)$$
\end{itemize}	
\end{definition}
\begin{example}
	Denote by $|-|$ and $S_\bullet(-)$ the functors for the 
	geometric realization of a simplicial set and the singular simplicial set 
	of a topological space, respectively. 
It is straightforward to functorially equip the singular simplicial set 
of a topological space with choices of horn fillers: 
for each of the horn inclusions $\Lambda^n_k \hookrightarrow \Delta^n$, 
fix a retract $R^n_k: |\Delta^n| \twoheadrightarrow |\Lambda^n_k|$. 
Then for any topological space $X$ and any diagram 
\begin{center}
\begin{tikzcd}
	\Lambda^n_k 
		\ar[d, "\iota^n_k"] 
		\ar[r, "f"] 
	& S_\bullet(X) \\
	\Delta^n 
\end{tikzcd}
\end{center}
We have an adjoint diagram 
\begin{center}
\begin{tikzcd}
	{| \Lambda^n_k |} 
		\ar[d, "|\iota^n_k|"] 
		\ar[r,"|f|"] 
	& X \\
	{| \Delta^n |}
\end{tikzcd}
\end{center}
And the adjoint of the map $|f| \circ R^n_k$ gives us the horn filler. 
This equips each horn with a choice of horn filler, 
defining a functor $AlgS_\bullet$, such that we have a factorization 
\begin{center}
\begin{tikzcd}
	 \text{Top} 
	 	\ar[rd, "AlgS_\bullet"] 
	 	\ar[rr, "S_\bullet"] 
	 & & \text{sSet} \\
	 & \text{AlgKan} 
	 	\ar[ru, "U_A"]
\end{tikzcd}
\end{center}
Where $U_{A}$ is the evident forgetful functor. 
In \citep{Nikolaus2011}, it is shown that the functor 
$U_A$ is the right adjoint of a Quillen equivalence, 
and thus the category $\text{AlgKan}$ models 
the homotopy theory of $\infty$-groupoids. 
\end{example}
\subsection{Simplicial T-complexes}
Suppose that we have an algebraic Kan complex $(X, \fill_X)$, 
and two composable simplices $f,g \in X_1$. Then the diagram 
\begin{center}
\begin{tikzcd}
	& \bullet 
		\ar[rd, "g"] \\
	\bullet 
		\ar[ru, "f"] 
	& & \bullet 
\end{tikzcd}
\end{center}
Can be canonically filled to obtain the composition $gf$. We can then consider the diagram
\begin{center}
\begin{tikzcd}
	& \bullet 
		\ar[rd, "g"] \\
	\bullet \ar[rr, "gf"] 
	& & \bullet 
\end{tikzcd}
\end{center}
And again use our choice of horn filler 
to obtain a composition $g^{-1}(gf)$. 
Now, if we working in a strict groupoid, 
we would have that this is equal to $f$. 
However, in the context of a weak $\infty$-groupoid 
we can conclude that $g^{-1}(gf)$ is homotopic to $f$, 
but not necessarily equal to it. 
If we wish to model strict groupoids, therefore, 
we must impose some sort of compatibility conditions on our fillers.
\\
\indent Suppose that we have a horn $h: \Lambda^n_k \to X$, 
and its composition filler is $\fill_X(h): \Delta^n \to X$. 
Then, if we are trying to model strict higher groupoids, 
a reasonable thing to assert is that $\fill_X(h)$ 
is the composition filler all of its horns, not just the $k$th.  
In the above example, this would mean that since we have a filler 
\begin{center}
\begin{tikzcd}
	& \bullet 
		\ar[rd, "g"] \\
	\bullet 
		\ar[rr, "gf" ] 
		\ar[ru, "f"] 
	& & \bullet 
\end{tikzcd}
\end{center}
The composition filler for $g^{-1}(gf)$ must be the again 
the same $2$-simplex, so that $f = g^{-1}(gf)$. 
This motivates the following definition, 
which appeared originally in \citep{Dakin_1977}. 
\begin{definition}[\citep{Dakin_1977}, Definition 1.1]
	A \textbf{Simplicial T-Complex} is a simplicial set $X$ 
	equipped with a set of marked simplices, 
	which we refer to as ``thin.'' 
	These are not required to form a subcomplex, 
	but must satisfy the following axioms:
	\begin{enumerate}
		\item Every degenerate simplex is thin. 
		\item For each horn $h: \Lambda^n_k \to X$, 
		there is a \emph{unique} filler $C: \Delta^n \to X$ such that
		the image of the nondegenerate $n$-simplex is thin in $X$. 
		Given such an extension, we refer to the $k$th boundary 
		of this filler as the \emph{composition} of the horn $h$. 
		\item If $h: \Lambda^n_k \to X$ is a horn with all 
		nondegenerate $(n-1)$ simplices thin, 
		then the composition of $h$ is thin as well.
	\end{enumerate}
	A map of simplicial $T$-complexes is a map on underlying simplicial sets 
	which sends thin simplices to thin simplices. 
	We denote the resulting category by $\stcom$.
\end{definition} 
Condition $1$ imposes that the composition of a simplex 
with degenerate simplices is the original simplex. 
Condition $2$ is the compatibility condition we reasoned out above. 
Condition $3$ is slightly more subtle, and is best illuminated by example: 
suppose we have a diagram of three composable $1$-simplices:
\begin{center}
\begin{tikzcd}[sep = large]
	& \bullet 
		\ar[rd, "g"] 
	& & \bullet \\
	\bullet 
		\ar[ru, "f" ] 
	& & \bullet 
		\ar[ru, "h" ]
\end{tikzcd}
\end{center}
Then we can fill in the horn defined by $f$ and $g$, 
and the horn defined by $g$ and $h$, giving us:
\begin{center}
\begin{tikzcd}[sep = large]
	& \bullet \ar[rd, "g"] 
		\ar[rr, "hg"] 
	& & \bullet \\
	\bullet \ar[ru, "f" ] 
		\ar[rr, "gf"] 
	& & \bullet \ar[ru, "h" ]
\end{tikzcd}
\end{center}
Then we can furthermore fill in the horn defined by $f$ and $hg$, giving us
\begin{center}
\begin{tikzcd}[sep = huge]
	& \bullet \ar[rd, "g", near end]
		\ar[rr, "hg"] & & \bullet \\
	\bullet \ar[ru, "f" ] 
		\ar[rr, "gf"] 
		\ar[rrru, dashed, "(hg)f" {pos=0.4} ] 
	& & \bullet \ar[ru, "h" ]
\end{tikzcd}
\end{center}
At this point, we have a horn $\Lambda^3_1$ where every face is thin. 
By axiom $3$, the filler is thin, and therefore, looking at face $1$, 
which we get by filling, we have $(hg)f = h(gf)$. 
So condition $3$ is a uniqueness condition that ensures 
a composition is uniquely defined regardless of 
the order we choose to do compositions (that is, fill horns) in. 
Concretely, if we have already obtained 
every $n-1$-simplex of an $n$-horn via composition of the $n-2$ simplices, 
then the additional $n-1$ simplex we get should also express composition. 
\\
\indent Another simpler motivation is this: suppose that we have a map 
$\partial \Delta^n \to X$ where each face is thin. 
Then we may look at any of the sub-horns, 
$\Lambda^n_k \hookrightarrow \partial \Delta^n \hookrightarrow X$, 
and obtain a composition of this sub-horn, giving us a thin filler 
$\Lambda^n_k \hookrightarrow \Delta^{n}  \to X$. 
Imposing condition (2) tells us that the composition must be thin, 
and by uniqueness of thin fillers, we must have that 
it is equal to the $k$th face of our original map 
$\partial \Delta^n \to X$. 
This implies that our thin filler is in fact 
a filler for every sub-horn of this map $\partial \Delta^n \to X$. 
In particular, we get that the simplicial $T$ complex formed by 
taking the $n$-skeleton of $X$ and iteratively throwing in 
all thin fillers of horns is $(n+1)$-coskeletal, 
much in the way the nerve of an ordinary strict $1$-category is $2$-coskeletal. 
\\
\indent Of course, the best motivation for simplicial $T$-complexes 
is simply that they are the correct simplicial analogue 
of strict $\infty$-groupoids, which is a consequence of \citep{Ashley_78}:
\begin{theorem}
	There is an equivalence of categories $\stcom \rightleftarrows \stinfty$.
\end{theorem}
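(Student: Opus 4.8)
The plan is to build an adjoint equivalence from a nerve functor $N\colon\stinfty\to\stcom$ and a ``fundamental strict $\infty$-groupoid'' functor $\rho\colon\stcom\to\stinfty$, and then to check that the unit and counit are isomorphisms. First observe that the packaging above is just the usual globular one: the groupoid structure on the pair $X_i,X_{i+1}$ is composition of $(i{+}1)$-cells along $i$-cells, the structure on the pair $X_i,X_{i+k}$ is composition along $i$-cells, and the strict $2$-groupoid compatibility encodes functoriality of $s,t$ together with the interchange laws, invertibility being built in everywhere. So I may freely use the standard calculus of strict $\omega$-groupoids: whiskering, the interchange/Eckmann--Hilton relations, and the folding operations that collapse a higher cell to a globular one along a chosen pair of its faces. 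Define $N(G)_n=\Hom_{\stinfty}(\mathcal{O}_n,G)$, where $\mathcal{O}_n$ is the free strict $\infty$-groupoid generated by the $n$-simplex $\Delta^n$ (the groupoidal analogue of Street's $n$-th oriental), with faces and degeneracies induced by those of $\mathcal{O}_\bullet$; declare $x\colon\mathcal{O}_n\to G$ to be \emph{thin} when the folding operation collapses the image of the unique top generating $n$-cell of $\mathcal{O}_n$ to an identity cell of $G$.

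It then remains to verify the $T$-complex axioms for $(N(G),\mathrm{thin})$. Axiom (1) is immediate, since a degenerate simplex factors through some $\mathcal{O}_{n-1}$ and so its top cell folds to an identity. For axiom (2), a horn $h\colon\Lambda^n_k\to N(G)$ is a compatible family of $(n{-}1)$-cells of $G$ (one for each face $\neq k$, together with all their iterated faces), and a filler is a choice of the missing $(n{-}1)$-cell and of an $n$-cell subject to the globular source/target identities; imposing that the $n$-cell fold to an identity converts those identities into an explicit whiskered-composite formula for the missing face, which exists and is unique precisely because $G$ is a groupoid. This is the computation ``$g^{-1}(gf)=f$'' from the motivating discussion, carried out in every dimension. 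Axiom (3) is the assertion that when in addition every nondegenerate $(n{-}1)$-face of $h$ is thin, the composite face produced by that formula is again an identity after folding; verifying this is a mechanical but somewhat lengthy Eckmann--Hilton-style bookkeeping with the interchange laws. Since a map of strict $\infty$-groupoids preserves identities, hence thinness, $N$ is a functor.

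For the reverse direction, given $(X,\mathrm{thin})$ let $\rho(X)_n\subseteq X_n$ be the \emph{globular} simplices, those $x$ with $\partial_i x$ degenerate for all $i\neq0$; the simplicial identities together with axiom (1) show $\partial_0 x$ and $\partial_1 x$ are again globular, so these restrict to source and target maps, and degeneracies supply identity cells. Composition of $n$-cells along an $(n{-}j)$-cell is defined exactly as in the motivating discussion: assemble the data into a horn, take its \emph{unique thin} filler, and return the appropriate face. The unit laws follow from axiom (1), associativity from axiom (3) --- which was designed precisely to make the composite independent of the order in which horns are filled --- and invertibility of each cell by filling the horn expressing its inverse and invoking axiom (2). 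This produces $\rho(X)\in\stinfty$, functorially in $X$.

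Finally the equivalence. The counit $\rho N(G)\to G$ is an isomorphism essentially by inspection: the globular simplices of $N(G)$ are the homomorphisms out of the free groupoid on a single cell, which are exactly the cells of $G$, and horn-filling composition matches globular composition by the formula of axiom (2). The unit $X\to N\rho(X)$ is the crux, and amounts to a nonabelian Dold--Kan statement: every simplex of a $T$-complex is determined by, and can be reassembled from, its globular ``folded'' components together with the thin structure. One proves this by induction on skeleta, using the unique-thin-filler axiom (2) to reconstruct an arbitrary $n$-simplex from its faces and its top fold, and checking that the resulting bijection is simplicial and preserves thinness. I expect this last isomorphism to be the main obstacle: it needs a careful inductive decomposition of simplices of a $T$-complex into globular data with precise tracking of faces, degeneracies, and the thin structure --- the nonabelian analogue of recovering a simplicial abelian group from its normalized chains. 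A clean fallback is to factor the equivalence through crossed complexes: Ashley's theorem (building on Dakin) gives $\stcom\simeq\crcom$ and the Brown--Higgins equivalence between crossed complexes and strict $\omega$-groupoids gives $\crcom\simeq\stinfty$, which isolates the hard bookkeeping inside the already-established nonabelian Dold--Kan correspondence for crossed complexes.
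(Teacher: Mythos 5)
The paper does not actually prove this statement: it records it as a known result, obtained by composing Ashley's nonabelian Dold--Kan equivalence $\stcom \simeq \crcom$ with the Brown--Higgins equivalence $\crcom \simeq \stinfty$ (stated separately as the theorem on crossed complexes versus strict $\infty$-groupoids). Your ``clean fallback'' at the end is therefore exactly the paper's route, and as a citation-level argument it is fine. Your primary, direct route --- a nerve via groupoidal orientals and an inverse built from simplices with prescribed degenerate faces --- is a genuinely different and more self-contained plan, but as written it has two real gaps rather than just lengthy bookkeeping.

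First, your $\rho(X)_n$, defined as the simplices with $\partial_i x$ degenerate for all $i\neq 0$, leaves only \emph{one} potentially nondegenerate face, so every cell you produce has an identity for its source (or target). That is precisely the crossed-complex normalization of the data, not the underlying globular set of a strict $\infty$-groupoid: a general $n$-cell $a\colon f\Rightarrow g$ with both $f$ and $g$ nonidentity corresponds to a simplex with \emph{two} nondegenerate faces. Recovering arbitrary globular cells from the one-nondegenerate-face cells is the content of the Brown--Higgins folding equivalence $\crcom\simeq\stinfty$, which is itself a nontrivial theorem; your construction silently reintroduces it. Second, the unit isomorphism $X\cong N\rho(X)$ --- reconstructing an arbitrary simplex of a $T$-complex from its globular components and the thin structure --- is not a finishing detail but \emph{is} Ashley's theorem; the inductive decomposition you gesture at is where all the work lives (and the verification of axiom (3) for the nerve by ``Eckmann--Hilton bookkeeping'' is likewise unestablished). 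So the direct route is a plausible outline with its two hardest steps asserted rather than proved; the argument only closes if you fall back on the Ashley and Brown--Higgins results, which is what the paper does.
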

We will say more about this equivalence in Section \ref{sec:doldkan}.
\subsection{Crossed complexes}
We have already defined the category $\stcom$ of simplicial T-complexes 
and the category $\stinfty$ of $\omega$-groupoids. 
We now define a third category, the category $\crcom$ 
of crossed complexes, which are a sort of 
``nonabelian chain complex'' in a way which we will clarify shortly.
\begin{definition}[\citep{Brown_Higgins_Sivera_2011}, Definition 7.1.9]
	A \textbf{crossed complex} $C$ is a sequence of sets 
	\begin{center}
	\begin{tikzcd}
		C_0 
			\ar[r, "{\id}"] 
		& C_1 
			\ar[l, "s", shift right = 2, bend right = 15, swap] 
			\ar[l, "t", shift left = 2, bend left = 15] 
		& C_2 
			\ar[l, "\delta_2"] 
		&  C_3 
			\ar[l,"\delta_3"] 
		& \cdots 
			\ar[l]
	\end{tikzcd}
	\end{center}
	Such that:
	\begin{enumerate}
		\item The diagram
		\begin{center}
		\begin{tikzcd}
		C_0 
			\ar[r, "{\id}"]
		& C_1 
			\ar[l, "s", shift right = 2, bend right = 15, swap] 
			\ar[l, "t", shift left = 2, bend left = 15]
		\end{tikzcd}
		\end{center}
		forms a groupoid, 
		which we will abuse notation by referring to simply as $C_1$.
		\item Each $C_i$ for $i \ge 2$ is a skeletal module 
		over the groupoid $C_1$: that is, a family of groups of the form 
		$$C_i = \coprod_{c \in C_0} C_i(c)$$
		where each $C_i(c)$ is a group, equipped with morphisms
		$$
		\phi_\ell: C_i(s(\ell)) \to C_i(t(\ell))
		$$
		for each $\ell \in C_1$, 
		satisfying that for composable $\ell$ and $p$ in $C_1$, 
		$$
		\phi_{\ell \circ p} = \phi_\ell \circ \phi_p
		$$
		and that $\phi_{\id_x} = \id_{C_i(x)}$. 
		Further, each $C_i(c)$ is abelian for $i>2$. 
		From now on we shall generally suppress the 
		$ c \in C_0$ from our notation when our meaning is clear, 
		saying for example ``$C_i$ is abelian for $i > 2$.''
		\item For $i > 2$, the maps $\delta_i$ are families of maps of groups 
		$\delta_i : C_i \to C_{i-1}$, 
		satisfying $\delta_{i-1} \circ \delta_i = 0$.
		\item $\delta_2$ is a family of maps of groups 
		$\delta_2(c) : C_2(c) \to \Aut(c)$, 
		where by $\Aut(c)$ we mean the automorphism group of $c$ in the groupoid $C_1$.
		\item The action of $C_1$ on $C_i$ is compatible with the 
		$\delta_i$ in the sense that for $i > 2$, 
		$\ell \in C_1$, and $a \in C_i(x)$
		$$\phi_\ell \circ \delta_i = \delta_{i-1} \circ \phi_\ell$$
		\item For any $a \in C_2$, $\delta_2(a)$ acts 
		by conjugation by $a$ on $C_2$ and trivially on $C_i$ for $i > 2$.
	\end{enumerate}
\end{definition}
To a crossed complex $C$, we can associate homology groups:
\begin{definition}
	For $C \in \crcom$ and $c \in C_0$, 
	define $\pi_0$, $\pi_1$, and $H_n$ for $n \ge 2$ by the following:
	\begin{itemize}
		\item $\pi_0(C)$ is $\pi_0$ of the groupoid $C_1$. 
		\item $\pi_1(C,c)$ is $\text{Aut}(C_1,c)/\delta_2(C_2(c))$
		\item For $n \ge 2$, $H_n(C,c)$ is $\text{ker}(\delta_n)(c)/\text{im}(\delta_{n+1})(c)$.
	\end{itemize}	 
	Note that $H_2$ is always abelian although $C_2$ may not be, 
	as the condition that $\delta(C_2)$ acts on itself by conjugation 
	means that $\text{ker}(\delta_2)$ commutes with everything in $C_2$.
\end{definition}
Our interest in $\crcom$ is motivated by the following theorem, 
which is the primary content of \citep{Brown_1981}:
\begin{theorem}\label{crossed-strict-equivalence}
	There is an equivalence of categories $\crcom \rightleftarrows \stinfty$.
\end{theorem}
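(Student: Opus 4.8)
The plan is to prove this by exhibiting mutually inverse functors $\lambda\colon \stinfty \to \crcom$ and $\rho\colon \crcom \to \stinfty$, following \cite{Brown_1981}; I will sketch the shape of the argument rather than the (substantial) bookkeeping.

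First I would construct $\lambda$. After replacing a strict $\infty$-groupoid $G$ by its underlying globular $\omega$-groupoid (a routine translation from the Definition of strict $\infty$-groupoid above, where one reads off single-step source and target maps and the groupoid laws in each adjacent pair of dimensions), set $C_0 = G_0$, $C_1 = G_1$ with its groupoid structure, and for $n \ge 2$ let $C_n$ be the set of $n$-cells whose $(n-1)$-dimensional source is an identity cell (an iterated degeneracy of a $0$-cell). Such a cell has a well-defined $0$-source and $0$-target, giving the decomposition $C_n = \bigoplus_{c \in C_0} C_n(c)$; the abelian group operation on $C_n(c)$ is the codimension-$n$ composition of $G$ applied to these ``based'' cells, and the operator $\phi_\ell$ for $\ell \in C_1$ is whiskering a based cell on one side by $\ell$ and on the other by $\ell^{-1}$. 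Taking $\delta\colon C_n \to C_{n-1}$ to be the target map, the crossed-complex axioms reduce to strictness: $\delta^2 = 0$ follows from the globular identities once the source is constant; the Peiffer rule in dimension $2$ and the commutativity of $C_n$ for $n > 2$ are precisely the strict interchange law for based cells --- i.e.\ an on-the-nose Eckmann--Hilton argument --- and $\delta$-equivariance is immediate. Functoriality is automatic because maps of strict $\infty$-groupoids preserve every operation used.

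Next I would construct $\rho$. Given a crossed complex $C$, I would build a strict $\infty$-groupoid whose $n$-cells record an element of $C_n$ together with a coherent frame of lower cells realizing its source and target (equivalently: the free strict $\infty$-groupoid on the underlying graded set, modulo the relations imposed by $\delta$, the $C_1$-action, and the group laws of the $C_n$), with source and target forgetting the top-dimensional datum and the higher compositions built from the group operations and module actions of $C$. One must then verify $\rho C$ satisfies the Definition, i.e.\ the groupoid laws in each adjacent pair of dimensions and the strict $2$-groupoid compatibility on each triple; the interchange laws hold exactly because of the Peiffer identity and the abelianness of $C_n$ for $n>2$. The argument of \cite{Brown_1981} organizes this directly; an alternative that tames the combinatorics routes $\rho$ through cubical $\omega$-groupoids with connections, using the ``folding map'', but either way the point is that a crossed complex carries just enough data to reconstitute a strict higher-groupoid.

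Finally I would produce the two natural isomorphisms. The isomorphism $\lambda\rho \cong \id$ is the gentle half: by construction the based $n$-cells of $\rho C$ are a copy of $C_n$ and whiskering matches the module action, so $\lambda$ reads $C$ back verbatim. The main obstacle is $\rho\lambda \cong \id$: one must show that an arbitrary strict $\infty$-groupoid $G$ is recovered from the crossed complex $C = \lambda G$, i.e.\ that every $n$-cell of $G$ has a canonical normal form expressing it as a whiskered composite of a based cell with degeneracies of its boundary, and that composition in $G$ is determined thereby. This is where strictness is indispensable: it is the statement that in dimensions $\ge 2$ the higher-groupoid structure of $G$ carries no information beyond what $\lambda G$ already records, proved by induction on dimension via an Eckmann--Hilton/collapse argument. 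I expect essentially all the difficulty to be concentrated in this normal-form step; the remaining verifications are diagram chases.
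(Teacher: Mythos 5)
The paper does not actually prove this statement: it is quoted as ``the primary content of \cite{Brown_1981}'' and used as a black box, so there is no in-paper argument to compare yours against. Your sketch is a faithful outline of the argument that citation points to. The functor $\lambda$ you describe is exactly the classical one: $C_n(p)$ the $n$-cells whose $(n-1)$-source is the iterated identity on $p$, $\delta$ the top target (with $\delta^2=0$ because $t_{n-2}x = t_{n-2}s_{n-1}x = \mathrm{id}_p$), the $C_1$-action by conjugation/whiskering, and the Eckmann--Hilton interchange argument giving commutativity in degrees $>2$ and the Peiffer relation in degree $2$. You also correctly identify where all the content lives: the natural isomorphism $\rho\lambda \cong \mathrm{id}$, i.e.\ the fact that a \emph{strict} $\infty$-groupoid is entirely reconstructible from its based cells via a normal-form/whiskering decomposition. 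Two caveats. First, as written this is an architecture rather than a proof --- the normal-form step is named but not carried out, and it is genuinely nontrivial; in \cite{Brown_1981} it is in fact handled by the detour through cubical $\omega$-groupoids with connections and the folding map, which you mention only as an ``alternative'' but which is essentially the route the cited proof takes. Second, a small point of bookkeeping: the group $C_2(c)$ is in general nonabelian (that is forced by the conjugation action of $\delta(C_2)$ in the crossed-complex axioms), so the operation you install there should not be described as an abelian group operation; this matches an inconsistency already present in the paper's own Definition of crossed complex. Since the paper supplies nothing beyond the citation, your sketch is strictly more informative than the paper's treatment, and its gaps are exactly the gaps the paper delegates to the literature.
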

Thus, both $\stcom$ and $\crcom$ can be used to analyze $\stinfty$, 
and we will use both towards proving our main theorem.
\section{The Dold-Kan correspondences: abelian and nonabelian}\label{sec:doldkan}
We use $\sabgroup$ to denote the category of simplicial abelian groups 
and $\ch$ to denote the category of nonnegatively graded 
chain complexes of abelian groups. 
In this section, we will recall the adjoint pairs in the following diagram:
\begin{center}
\begin{equation}\label{eqn:functors-square}
\begin{tikzcd}[sep = huge]
		 \stcom 
		 	\ar[rr, bend left = 20, "\abtcom"]  
		 	\ar[dd, bend left = 20, "\nonabdoldnormalizer"]
		 & &  \sabgroup 
		 	\ar[ll, bend left = 20, "\utcom"] 
		 	\ar[dd, bend left = 20, "\doldnormalizer"] \\ \\
		\crcom 
			\ar[rr, bend left = 20, "\abcomplex"] 
			\ar[uu, bend left = 20, "\nonabdoldsset"]
		& &  \ch 
			\ar[ll, bend left = 20, "\ucomplex"] 
			\ar[uu, bend left = 20, "\doldsset"]
\end{tikzcd}
\end{equation}
\end{center}
Here the vertical edges are equivalences of categories, 
the left facing arrows are left adjoints, 
and the right facing arrows are the corresponding right adjoints. 
The right edge is the best known: it is the Dold-Kan correspondence. 
The left edge is a sort of nonabelian version, due to \citep{Ashley_78}. 
The functors on the top and bottom edges we believe are known, 
with the right adjoint appearing in \citep{Brown_Higgins_Sivera_2011},\footnote{
	Where it is called $\Theta$ and defined more generally 
	for chain complexes with a groupoid of operators.}
but lacking a comprehensive reference we fully define them here. 
The top and bottom edges each form adjunctions, 
but not equivalences of categories or homotopy theories. 
However, they induce equivalences of homotopy theories of $1$-connected objects, 
a fact which will be key to us. 
The main result of this section which we will use in proving 
our main theorem is the following.
\begin{theorem}\label{thm:simply-connected-crossed-complex}
	There are adjoint pairs of functors as in 
	Diagram \ref{eqn:functors-square} above satisfying: 
	\begin{enumerate}
		\item The left and right edges are equivalences.
		\item For $A \in \sabgroup$, 
		the underlying simplicial set of $\utcom(A)$ 
		is the same as the underlying simplicial set of $A$.
		\item For $C \in \crcom$ with $C$ $1$-reduced 
		(that is, $C_0$ and $C_1$ are singletons), 
		we have that the natural map $C \to \ucomplex(\tilde{\abcomplex}(C))$ 
		induces an isomorphism, where $\tilde{\abcomplex}(C)$ 
		is the reduced chain complex coming from $\abcomplex(C)$.
		\item The diagram is commutative in the sense that there is a natural isomorphism 
		$\utcom \circ \doldsset \cong \nonabdoldsset \circ \ucomplex$.
	\end{enumerate}
\end{theorem}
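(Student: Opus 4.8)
The plan is to pin down the four adjoint pairs explicitly and then deduce the four properties, reducing everything to the classical Dold--Kan correspondence and to Ashley's nonabelian analogue of it. I would start with the vertical edges, which yield property (1) immediately. The right edge is the classical Dold--Kan correspondence: $\doldnormalizer$ is Moore normalization and $\doldsset$ is its inverse, with the familiar formula $\doldsset(C)_n = \bigoplus_{[n] \twoheadrightarrow [k]} C_k$. The left edge is Ashley's theorem \cite{Ashley_78} (equivalently, the composite of the equivalence $\crcom \simeq \stinfty$ of Theorem~\ref{crossed-strict-equivalence} with the equivalence $\stcom \simeq \stinfty$ already cited): $\nonabdoldnormalizer$ sends a simplicial $T$-complex to its associated crossed complex of thin-based simplices, and $\nonabdoldsset$ is the nerve construction $\nonabdoldsset(C)_n = \Hom_{\crcom}(\pi(\Delta^n), C)$, where $\pi(\Delta^n)$ is the free crossed complex on the $n$-simplex, with an $n$-simplex declared thin precisely when the corresponding morphism annihilates the top-dimensional generator $\iota_n$ of $\pi(\Delta^n)$. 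Since these are equivalences it is worth recording at the outset that each vertical functor is adjoint on both sides, so the left/right labelling is a matter of convention.

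Next I would define the horizontal pairs. For $\ucomplex \colon \ch \to \crcom$, send a chain complex $D$ to the crossed complex whose object set is the underlying set of $D_0$, whose groupoid of arrows is $D_1$ with an arrow $x$ running from $a$ to $a + \partial x$, and whose modules in degree $\ge 2$ are the constant abelian groups $D_i$ with trivial $D_1$-action and $\delta = \partial$; the crossed-complex axioms all collapse to $\partial^2 = 0$. Its left adjoint $\abcomplex$ is the evident linearization (replace the object groupoid by the free abelian group on $\pi_0$, abelianize in degree $1$, and pass to coinvariants of the module actions in higher degrees), and the composite of the unit with the projection $\abcomplex(C) \to \tilde{\abcomplex}(C)$ is the natural map appearing in (3). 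For the top edge I would define $\utcom \colon \sabgroup \to \stcom$ directly: it sends a simplicial abelian group $A$ to its underlying simplicial set equipped with the thin structure in which $a \in A_n$ is thin iff its image under the projection $A_n \twoheadrightarrow \doldnormalizer(A)_n$ onto the top summand in the Dold--Kan decomposition vanishes. Verifying Dakin's three axioms is routine from that decomposition, and with this definition property (2) holds by construction; the left adjoint $\abtcom$ then exists by the adjoint functor theorem, or by an explicit "free simplicial abelian group modulo thin relations" formula.

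Property (3) is then an unwinding: if $C$ is $1$-reduced then $C_0$ and $C_1$ are trivial, so $C$ is nothing but a chain complex concentrated in degrees $\ge 2$ (in degree $2$ the group is abelian directly from the module axiom, or from the Peiffer identity together with triviality of $C_1$); its reduced linearization $\tilde{\abcomplex}(C)$ is that same chain complex and $\ucomplex$ returns it unchanged, so the natural map $C \to \ucomplex(\tilde{\abcomplex}(C))$ is an isomorphism. Property (4) is where the actual work lies: I would construct a natural isomorphism of simplicial sets $\Hom_{\crcom}(\pi(\Delta^\bullet), \ucomplex(C)) \cong \doldsset(C)_\bullet$ through the chain
\[
\Hom_{\crcom}(\pi(\Delta^n), \ucomplex(C)) \cong \Hom_{\ch}(\abcomplex(\pi(\Delta^n)), C) \cong \Hom_{\ch}(\doldnormalizer(\mathbb{Z}\Delta^n), C) \cong \doldsset(C)_n,
\]
the first isomorphism being the $\abcomplex \dashv \ucomplex$ adjunction, the middle one using that the linearization of the free crossed complex on $\Delta^n$ is the normalized cellular chain complex of $\Delta^n$ (i.e.\ $\abcomplex \circ \pi \cong \doldnormalizer \circ \mathbb{Z}(-)$, checked on representables), and the last one combining the Dold--Kan adjunction with the free/forgetful adjunction and Yoneda. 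One then checks that under this identification the top generator $\iota_n$ corresponds exactly to the top normalized component, so the two thin structures match; combined with $\doldsset \doldnormalizer \cong \id$ this gives $\utcom \circ \doldsset \cong \nonabdoldsset \circ \ucomplex$.

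The main obstacle is precisely this comparison in (4): identifying the nerve of a crossed complex of the form $\ucomplex(C)$, as a simplicial set with thin structure, with the Dold--Kan object $\doldsset(C)$. This requires a sufficiently explicit handle on $\pi(\Delta^n)$ and on Ashley's thin simplices to see that face and degeneracy operators, as well as the markings, transport correctly along the above isomorphisms; everything else is either formal adjunction-chasing or routine verification of the defining axioms, though one should also be careful about the handedness of the nonabelian Dold--Kan adjunction so that the stated left/right adjoint assignments along all four edges are mutually consistent.
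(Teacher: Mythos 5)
Your treatment of items (1)--(3) matches the paper's: the right edge is classical Dold--Kan, the left edge is Ashley's equivalence, $\utcom$ is defined so that (2) holds by construction (your ``image in $\doldnormalizer(A)_n$ vanishes'' is exactly the paper's ``sums of degenerate simplices''), and (3) is the same unwinding of what $1$-reducedness does to $\abcomplex(C)$ in degrees $0$ and $1$. Where you genuinely diverge is item (4). The paper builds the isomorphism $\utcom \circ \doldsset \cong \nonabdoldsset \circ \ucomplex$ by hand: it uses the explicit inductive description of the $n$-simplices of the nerve of a crossed complex as tuples $(x_0,\dots,x_n;\alpha)$, identifies the $n$-simplices of $\doldsset(A_\bullet)$ with chain maps $\doldnormalizer(\mathbb{Z}[\Delta^n]) \to A_\bullet$, defines $\eta(x) = (\eta(d_0x),\dots,\eta(d_nx);\alpha)$ by induction on dimension, and then checks well-definedness, compatibility with the thin structures, injectivity and surjectivity separately. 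Your route is instead a formal adjunction chase, $\Hom_{\crcom}(\pi(\Delta^n), \ucomplex(C)) \cong \Hom_{\ch}(\abcomplex(\pi(\Delta^n)), C) \cong \Hom_{\ch}(\doldnormalizer(\mathbb{Z}[\Delta^n]), C) \cong \doldsset(C)_n$, which concentrates all of the content into the single computation $\abcomplex(\pi(\Delta^n)) \cong \doldnormalizer(\mathbb{Z}[\Delta^n])$ (a Brown--Higgins-type statement that linearizing the fundamental crossed complex of $\Delta^n$ gives its normalized cellular chains) together with a check that the top generators, hence the thin markings, correspond. This is cleaner, gives naturality in $C$ for free, and makes bijectivity automatic; the cost is that the cosimplicial naturality of the middle isomorphism and the matching of thin structures must still be verified by an explicit handle on $\pi(\Delta^n)$, which is roughly where the paper's hands-on induction lives anyway. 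Two small cautions. First, your parenthetical description of $\abcomplex$ in degree $0$ as ``the free abelian group on $\pi_0$'' should be the free abelian group on the object set $C_0$ (as in the paper's definition); otherwise $\abcomplex(\pi(\Delta^n))_0$ would be $\mathbb{Z}$ rather than $\mathbb{Z}^{n+1}$ and your middle isomorphism would fail in degree $0$. Second, you are right to flag handedness: in this paper $\abcomplex$ and $\abtcom$ are the left adjoints (despite a mislabeling in the prose around diagram (1)), so your use of the $\abcomplex \dashv \ucomplex$ adjunction in the first isomorphism is in the correct direction.
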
	 
The proof is spread throughout this section.
\subsection{The right edge: the Dold-Kan correspondence}
For $A \in \sabgroup$, denote by $D(A_n)$ the subgroup of $A_n$ generated by the degenerate simplices.
\begin{definition}[The functor $\doldnormalizer$]
	The functor $\doldnormalizer: \sabgroup \to \ch$ is defined by 
	$\doldnormalizer(A)_n = A_n/D(A_n)$, 
	with differential the alternating sum of the face maps.
\end{definition}	
The Dold-Kan correspondence is the following well-known result:
\begin{theorem}[The Dold-Kan Correspondence]
	There is an equivalence of categories 
	$\doldnormalizer: \sabgroup \rightleftarrows \ch : \doldsset $, 
	where for $C_\bullet \in \ch$, 
	$$
		\doldsset(C_\bullet)_n := \bigoplus_{[n] \twoheadrightarrow [k]} C_k		
	$$
\end{theorem}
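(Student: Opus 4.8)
The plan is to prove this by the classical route: establish a natural direct sum decomposition of every simplicial abelian group indexed by the surjections in $\Delta$, which simultaneously endows the putative inverse $\doldsset$ with its simplicial structure and supplies both of the natural isomorphisms that witness the equivalence.

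First I would reconcile the quotient description of $\doldnormalizer$ with the \emph{normalized} subcomplex $\mathrm{N}(A)_n := \bigcap_{i=0}^{n-1}\ker(d_i)\subseteq A_n$, with differential $(-1)^n d_n$. The composite $\mathrm{N}(A)_n \hookrightarrow A_n \twoheadrightarrow A_n/D(A_n)$ is an isomorphism of chain complexes; this is proven by filtering $D(A_n)=\sum_i s_i A_{n-1}$ by $D^{(j)}=\sum_{i\le j} s_i A_{n-1}$ and peeling off one stage at a time using the simplicial identities, which also yields the direct sum $A_n = \mathrm{N}(A)_n \oplus D(A_n)$. After this reduction the goal is to produce natural isomorphisms $\doldnormalizer\circ\doldsset \cong \id_{\ch}$ and $\doldsset\circ\doldnormalizer \cong \id_{\sabgroup}$.

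The heart of the argument is the decomposition lemma: for every $A\in\sabgroup$ and every $n$, the canonical map
$$
\bigoplus_{\eta\colon [n]\twoheadrightarrow [k]} \mathrm{N}(A)_k \longrightarrow A_n, \qquad (\eta, x)\mapsto \eta^*(x),
$$
with the sum over all surjections in $\Delta$ (including $\id_{[n]}$), is an isomorphism. I would prove this by induction on $n$, using $A_n=\mathrm{N}(A)_n\oplus D(A_n)$ together with the inductive hypothesis applied to the degeneracies and the simplicial identities relating the $s_i$ and $d_j$ to organize $D(A_n)$ as the sum of the nonidentity summands. This already shows that $\doldsset$, \emph{as a graded abelian group}, is correct: $\doldsset(\mathrm{N}(A))_n = A_n$ naturally. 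The remaining work — which I expect to be the main obstacle — is to define the simplicial operators on $\doldsset(C)$ directly from $C\in\ch$ and verify the simplicial identities. Concretely, for $\alpha\colon[m]\to[n]$ and a summand indexed by a surjection $\eta\colon[n]\twoheadrightarrow[k]$, factor $\eta\alpha = \epsilon\eta'$ uniquely into a surjection $\eta'\colon[m]\twoheadrightarrow[\ell]$ followed by an injection $\epsilon\colon[\ell]\hookrightarrow[k]$, and set $\alpha^*$ on that summand to be the identity into the $\eta'$-summand if $\epsilon=\id$, the differential $\delta\colon C_k\to C_{k-1}$ into the $\eta'$-summand (with the appropriate sign) if $\epsilon$ is the coface omitting the top vertex, and zero otherwise. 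Checking that this assignment is functorial in $[m]$ is a finite but delicate case analysis driven by the uniqueness of epi--mono factorizations in $\Delta$ and by $\delta^2=0$.

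Finally, with $\doldsset\colon\ch\to\sabgroup$ in hand, $\doldnormalizer\circ\doldsset\cong\id_{\ch}$ is read off directly: on the $\eta=\id$ summand $C_n$ each $d_i$ with $i<n$ acts by zero (the coface $\delta^i$ is never the top one for $i<n$), so this summand lies in $\mathrm{N}(\doldsset(C))_n$; by the direct sum structure it exhausts it, and $d_n$ restricted there is exactly $\delta$ up to sign. And $\doldsset\circ\doldnormalizer\cong\id_{\sabgroup}$ is precisely the decomposition lemma, once one checks that the isomorphism there is not merely an isomorphism of graded groups but of simplicial abelian groups — which follows from the same simplicial identities used to prove the lemma. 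Naturality in $A$ and $C$ is immediate from the constructions, completing the equivalence.
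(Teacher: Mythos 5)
Your outline is a correct rendition of the classical proof of the Dold--Kan correspondence: the identification of the quotient $A_n/D(A_n)$ with the normalized subcomplex, the direct-sum decomposition $A_n\cong\bigoplus_{\eta\colon[n]\twoheadrightarrow[k]}\mathrm{N}(A)_k$, and the epi--mono factorization defining the simplicial operators on $\doldsset(C)$ are exactly the standard argument. The paper does not prove this theorem itself but defers to the cited reference, which proceeds along the same lines, so there is nothing to reconcile.
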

For a reference, see \citep{Matthew}. 
We will need the following fact about this adjunction:
\begin{lemma}\label{lem:simplices-in-doldsset}
	For $A_\bullet \in \ch$, the $n$-simplices of $\doldsset(A_\bullet)$ 
	are in bijection with chain complex maps 
	$\doldnormalizer(\mathbb{Z}[\Delta^n]) \to A_\bullet$. 
	Under this identification, the subgroup generated by the 
	degenerate simplices consists of those maps which send 
	the unique nondegenerate $n$-simplex to $0$. 
\end{lemma}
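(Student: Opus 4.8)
The plan is to unwind the adjunction $\doldnormalizer \dashv \doldsset$ together with the standard Yoneda-type description of simplices. First I would recall that for any simplicial set $Y$, the $n$-simplices of $Y$ are in natural bijection with simplicial maps $\Delta^n \to Y$, and that the free simplicial abelian group functor $\mathbb{Z}[-]$ is left adjoint to the forgetful functor $\sabgroup \to \sset$. Hence for $A_\bullet \in \ch$,
\[
  \doldsset(A_\bullet)_n \;=\; \Hom_{\sset}\mleft(\Delta^n, \doldsset(A_\bullet)\mright) \;\cong\; \Hom_{\sabgroup}\mleft(\mathbb{Z}[\Delta^n], \doldsset(A_\bullet)\mright).
\]
Now applying the adjunction $\doldnormalizer \dashv \doldsset$ of the Dold--Kan correspondence to the right-hand side gives
\[
  \Hom_{\sabgroup}\mleft(\mathbb{Z}[\Delta^n], \doldsset(A_\bullet)\mright) \;\cong\; \Hom_{\ch}\mleft(\doldnormalizer(\mathbb{Z}[\Delta^n]), A_\bullet\mright),
\]
which is exactly the claimed bijection. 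I would make sure to note naturality in $[n]$, so that this is an isomorphism of simplicial sets (with the simplicial structure on the right coming from precomposition with $\doldnormalizer(\mathbb{Z}[\Delta^\bullet])$); this is needed to make sense of ``degenerate simplices'' on the right-hand side.

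For the second assertion, I would trace through what the above bijection does to a degenerate simplex. A simplex $\sigma \in \doldsset(A_\bullet)_n$ is degenerate iff it is in the image of some $s_i \colon \doldsset(A_\bullet)_{n-1} \to \doldsset(A_\bullet)_n$, i.e. iff the corresponding map $\mathbb{Z}[\Delta^n] \to \doldsset(A_\bullet)$ factors through $\mathbb{Z}[s_i] \colon \mathbb{Z}[\Delta^n] \to \mathbb{Z}[\Delta^{n-1}]$ followed by some map $\mathbb{Z}[\Delta^{n-1}] \to \doldsset(A_\bullet)$. Transporting across the adjunction, $\sigma$ lies in the subgroup generated by degeneracies iff the corresponding chain map $f \colon \doldnormalizer(\mathbb{Z}[\Delta^n]) \to A_\bullet$ factors through one of the maps $\doldnormalizer(\mathbb{Z}[s_i]) \colon \doldnormalizer(\mathbb{Z}[\Delta^n]) \to \doldnormalizer(\mathbb{Z}[\Delta^{n-1}])$. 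The key computational input, which I would state and verify directly, is the well-known description of $\doldnormalizer(\mathbb{Z}[\Delta^n])$: it is the chain complex with $\doldnormalizer(\mathbb{Z}[\Delta^n])_k$ free on the nondegenerate $k$-simplices of $\Delta^n$, and in particular $\doldnormalizer(\mathbb{Z}[\Delta^n])_n \cong \mathbb{Z}$, generated by the unique nondegenerate $n$-simplex $\iota_n$. Each degeneracy map $\doldnormalizer(\mathbb{Z}[s_i])$ kills $\iota_n$ (since $s_i \iota_{n-1}$ is a degenerate $n$-simplex, hence $0$ in the normalized complex), and conversely the collection of these degeneracies spans all of $\doldnormalizer(\mathbb{Z}[\Delta^n])$ in degree $n$ once one also includes lower degrees. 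Thus $f$ factors through a degeneracy iff $f(\iota_n) = 0$, which is the desired statement.

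The main obstacle — really the only nontrivial point — is pinning down the module structure of $\doldnormalizer(\mathbb{Z}[\Delta^n])$ and checking that ``in the image of degeneracies'' translates cleanly to ``vanishes on $\iota_n$'': one must be slightly careful that the subgroup $D(A_n)$ generated by degenerate simplices really is detected by a single condition on $\iota_n$, rather than requiring compatibility across several degeneracy maps. This follows because $\doldnormalizer(\mathbb{Z}[\Delta^n])_n$ is rank one on $\iota_n$ while all the degeneracies $s_i$ have $\iota_n$ outside their image in top degree, so any chain map sending $\iota_n \mapsto 0$ automatically factors (as a chain map) through the quotient by the span of the degeneracies, which is precisely $\doldnormalizer$ applied to the nondegenerate $n$-skeleton; I would spell this out using that $\doldnormalizer$ is additive and exact on the relevant short exact sequences of simplicial abelian groups. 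Everything else is formal manipulation of adjunctions.
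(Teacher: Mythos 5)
Your proof of the first assertion is correct and is essentially the paper's own argument unpacked: the paper invokes ``the general fact about adjunctions on $\sset$ defined by cosimplicial objects,'' and your chain of identifications $\doldsset(A_\bullet)_n \cong \Hom_{\sabgroup}(\mathbb{Z}[\Delta^n],\doldsset(A_\bullet)) \cong \Hom_{\ch}(\doldnormalizer(\mathbb{Z}[\Delta^n]),A_\bullet)$ is exactly that fact made explicit (note that since $\doldnormalizer$ and $\doldsset$ are quasi-inverse equivalences, the adjunction you use is available in the direction you need). For the second assertion the paper simply defers to its reference, so you are attempting more than the paper does; the forward inclusion in your argument is fine: each $\doldnormalizer(\mathbb{Z}[\sigma^i])$ kills $\iota_n$ because $\sigma^i\circ\iota_n$ is a degenerate simplex of $\Delta^{n-1}$, so every element of the subgroup generated by degenerate simplices corresponds to a chain map vanishing on $\iota_n$.

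The gap is in the converse, which you correctly flag as the only nontrivial point but do not actually close. The phrase ``factors through the quotient by the span of the degeneracies'' does not parse: the maps $\doldnormalizer(\mathbb{Z}[s_i])$ point \emph{out of} $\doldnormalizer(\mathbb{Z}[\Delta^n])$, so they have no span inside it to quotient by. What the hypothesis $f(\iota_n)=0$ actually gives is that $f$ kills the subcomplex generated by $\iota_n$ (namely $\mathbb{Z}\iota_n$ in degree $n$ and $\mathbb{Z}\,d\iota_n$ in degree $n-1$), and it is not obvious that the resulting quotient complex is covered by the codegeneracies; that is precisely the content of the Dold--Kan unit computation, not a formal consequence of rank counting in top degree. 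The clean way to finish is to use the explicit formula $\doldsset(A_\bullet)_n = \bigoplus_{\eta\colon [n]\twoheadrightarrow[k]} A_k$: under your bijection the summand indexed by $\eta$ consists of the maps $\doldnormalizer(\mathbb{Z}[\Delta^n])\xrightarrow{\doldnormalizer(\mathbb{Z}[\eta])}\doldnormalizer(\mathbb{Z}[\Delta^k])\to A_\bullet$, the subgroup generated by degenerate simplices is exactly the sum of the summands with $\eta\neq\mathrm{id}$, and evaluation at $\iota_n$ restricts to the identity on the summand indexed by $\mathrm{id}_{[n]}$ and to zero on all others; hence its kernel is exactly the degenerate subgroup. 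Without some version of this decomposition (or an equivalent induction), your argument that vanishing on $\iota_n$ forces membership in the degenerate subgroup does not go through.
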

\begin{proof}
	The bijection follows from a general fact about adjunctions on 
	$\sset$ being defined by cosimplicial objects. 
	The second fact is less trivial; see the above reference for details. 
\end{proof}
\subsection{The left edge: the nonabelian Dold-Kan correspondence}
In \citep{Ashley_78}, the following is proved: 
\begin{theorem}\label{thm:simplicial-crossed-equivalence}
	There is an equivalence of categories 
	$\nonabdoldnormalizer :  \crcom \rightleftarrows \stcom : \nonabdoldsset$.
\end{theorem}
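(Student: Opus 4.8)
The plan is to produce the two functors explicitly and then check that the unit and counit of the resulting adjunction are natural isomorphisms, with the simplicial $T$-complex axioms playing the structural role that the simplicial identities play in the classical Dold--Kan theorem.

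First I would construct the nerve $\nonabdoldsset \colon \crcom \to \stcom$. Let $\Pi\Delta^n$ denote the fundamental crossed complex of the standard $n$-simplex equipped with its skeletal filtration; then $[n] \mapsto \Pi\Delta^n$ is a cosimplicial object in $\crcom$, and one sets $\nonabdoldsset(C)_n := \Hom_{\crcom}(\Pi\Delta^n, C)$, with simplicial operators induced by the cofaces and codegeneracies of $\Pi\Delta^\bullet$. To mark the thin simplices one declares an $n$-simplex $f \colon \Pi\Delta^n \to C$ thin exactly when it is degenerate, or $n \ge 2$ and the value of $f$ on the top $n$-cell of $\Pi\Delta^n$ is the one forced by the values of $f$ on the faces through the groupoid action, the module structure, and the boundary map of $C$ (so that in low dimensions the thin $2$-simplices are the ``commuting'' ones and the thin $1$-simplices are exactly the degenerate ones). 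Verifying the three axioms of a simplicial $T$-complex for this marking is the first substantive task: existence and uniqueness of thin horn-fillers amounts, after unwinding the combinatorics of $\Pi\Delta^n$, to the statement that a compatible family of faces of a prospective simplex of $C$ extends to a \emph{unique} crossed-complex morphism, which one reads off from $\delta^2 = 0$ and the module and conjugation axioms; the third axiom then says that these forced top-cell values agree across different horns on the same boundary.

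Second I would define the normalization $\nonabdoldnormalizer \colon \stcom \to \crcom$ by hand, in direct analogy with the normalized chain complex. Put $(\nonabdoldnormalizer X)_0 = X_0$ and $(\nonabdoldnormalizer X)_1 = X_1$, with the groupoid structure given by composition of thin $2$-simplices: identities are degeneracies, composition is the face opposite the filled horn, and associativity together with the inverse laws is the $3$-simplex computation illustrated after the definition of $\stcom$. For $n \ge 2$ set $(\nonabdoldnormalizer X)_n = \{\, x \in X_n : d_i x \text{ is degenerate for } 1 \le i \le n \,\}$, graded over $X_0$ by the common base vertex; the group operation (nonabelian only when $n = 2$) is the face read off from the unique thin filler of an $(n+1)$-horn assembled from the two given simplices and suitable degeneracies, the $(\nonabdoldnormalizer X)_1$-action is obtained by transporting along thin fillers, and the differential is $d_0$. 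That this data satisfies the crossed-complex axioms --- $\delta^2 = 0$, the conjugation action of $\delta$, and abelianness in degrees $> 2$ --- again reduces to repeated use of uniqueness of thin fillers, with the Eckmann--Hilton symmetry in high degrees coming from the fact that the operation can be carried out along two independent directions.

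Third, I would exhibit the unit $\eta \colon \id_{\crcom} \Rightarrow \nonabdoldnormalizer \circ \nonabdoldsset$ and the counit $\eps \colon \nonabdoldsset \circ \nonabdoldnormalizer \Rightarrow \id_{\stcom}$ and show each is an isomorphism. For $\eta$ the content is that the normalized crossed complex of the nerve of $C$ recovers $C$; this rests on a nonabelian analogue of Lemma~\ref{lem:simplices-in-doldsset}: the crossed complex $\Pi\Delta^n$ is ``free on a single top $n$-cell relative to its boundary'', so the $n$-simplices of $\nonabdoldsset(C)$ with $d_1 x, \dots, d_n x$ degenerate correspond bijectively and structure-compatibly with $C_n$; naturality is then formal. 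For $\eps$ one checks that sending $x \in X_n$ to the crossed-complex map $\Pi\Delta^n \to \nonabdoldnormalizer X$ recording the faces of $x$ is a bijection $X_n \xrightarrow{\sim} \nonabdoldsset(\nonabdoldnormalizer X)_n$ preserving thinness, by induction on dimension: a simplicial $T$-complex is reconstructed from any of its skeleta together with its thin fillers and is then coskeletal (as observed after the definition of $\stcom$), and $\Pi\Delta^\bullet$ obeys the same recursion, so an isomorphism on $n$-skeleta propagates one dimension higher. I expect this last bookkeeping --- matching the ``normalization of $\Pi\Delta^n$'' with a single free generator while faithfully tracking the groupoid-of-operators and crossed-module structure in dimensions $1$ and $2$ --- to be the main obstacle, and it is exactly where the $T$-complex axioms (uniqueness of thin fillers, and the third axiom) are indispensable, substituting for the Eilenberg--Zilber decomposition used in the classical theorem; everything else is careful but essentially mechanical transcription.
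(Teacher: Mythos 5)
Your outline is essentially correct, but it is worth saying up front that the paper does not actually prove this theorem: it defers entirely to \cite{Ashley_78} and only records an explicit description of the nerve functor (there called $\nonabdoldnormalizer$, going from $\crcom$ to $\stcom$ --- so your naming of the two functors is reversed relative to the theorem statement, though the paper's own diagram (1) uses your convention). What you have written is a reconstruction of Ashley's argument, and your abstract nerve $\Hom_{\crcom}(\Pi\Delta^\bullet, C)$ is exactly the paper's concrete description: a crossed-complex map out of $\Pi\Delta^n$ amounts to a tuple $(x_0,\dots,x_n;\alpha)$ of compatible faces together with a top-dimensional element $\alpha$ whose boundary $\delta(\alpha)$ equals the prescribed signed/conjugated combination of the faces, which is the formula displayed in the paper's proof. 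Two points in your sketch need tightening. First, in $(\nonabdoldnormalizer X)_n$ the condition ``$d_i x$ is degenerate for $1\le i\le n$'' is too weak once $n\ge 3$ (a degenerate $(n-1)$-simplex need not be constant), so the summands would not be graded over $X_0$ as you claim; you need those faces to be \emph{totally} degenerate, i.e.\ iterated degeneracies of a single vertex. Second, the thin simplices of the nerve are not those whose top cell is ``forced by the faces'' --- the faces only determine $\delta(\alpha)$, not $\alpha$ itself --- but those with $\alpha$ equal to the identity element; with that definition the unique-thin-filler axiom becomes the statement that a horn of the nerve extends uniquely to a simplex with $\alpha=0$, which is where $\delta^2=0$ and the module axioms enter as you say. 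With these repairs the remaining steps (the groupoid structure on $X_1$ via thin $2$- and $3$-simplices, the crossed-module axioms from uniqueness of thin fillers, and the folding argument identifying $X_n$ with tuples of faces plus a normalized element for the counit) are the standard ones and do go through; they are genuinely lengthy, which is presumably why the paper cites Ashley rather than reproving the result.
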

\begin{proof}
	For a full proof, see \citep{Ashley_78}. 
	We describe here just the functor $\nonabdoldnormalizer: \crcom \to \stcom$, 
	as this is the only particular we will make use of: 
	for $C \in \crcom$, the $n$-simplices of $\nonabdoldnormalizer(C)$ are defined inductively as follows:
	\begin{itemize}
		\item For $n = 0$, they are simply $C_0$.
		\item For $n = 1$, they are simply $C_1$, 
			with boundary maps $d_0$ and $d_1$ defined as 
			the source and target maps of the groupoid $C_1$. 
		\item For $n \ge 2$ they are tuples $x = (x_0,...,x_n;\alpha)$ 
		where $x_i \in \nonabdoldnormalizer(C)_{n-1}$, $d_i(x_j) = d_{j-1}(x_i)$ 
		for $i < j$, and $\alpha \in C_n(d_1\cdots d_{n-1}(x_n))$. 
		Further, they must satisfy
		$$
			\delta(\alpha) = 
			\begin{cases}
				x_2x_0x_1^{-1} & \text{if }n=2 \\
				\phi_{p^{-1}}(x_0)x_2x_1^{-1}x_3^{-1} & \text{if } n = 3 \\
				\phi_{p^{-1}}(x_0) \sum_{i = 1}^n (-1)^i x_i & \text{if } n \ge 4
			\end{cases}		
		$$
		Here $p = d_2 d_3\cdots d_{n-1}(x_n)$, 
		and $\phi$ is the action of $C_1$ on $C_n$ for $n \ge 2$. 
		The boundary maps are defined as $d_i(x) = x_i$. 
		The degeneracy maps are defined inductively as follows: for $x = (x_0,\dots,x_n;\alpha)$
		$$s_i(x) = (s_{i-1}x_0,...,s_{i-1}x_{i-1},x,x,s_{i+1}x_i,...,s_i x_{n};e)$$
		Where $e$ is the identity element of (the appropriate component of) $C_{n+1}$.
	\end{itemize}
	For a simplex $x = (x_0,...,x_n;\alpha) \in \nonabdoldnormalizer(C)$, 
	we will call $\alpha$ as the \textit{automorphism element} of the simplex, 
	and refer to it by $\text{aut}(x)$. 
	One can think of the simplices of $\nonabdoldnormalizer(C)$ as being 
	inductively built out of compatible automorphism elements. 
	A simplex is thin precisely when $\alpha$ is the identity element.
\end{proof}
We have now finished verifying item (1) of 
Theorem \ref{thm:simply-connected-crossed-complex}. 
This equivalence of categories respects the natural 
homotopical structures on the categories, 
in particular the fundamental algebraic invariants:
\begin{lemma}\label{lem:weak-equiv-t-complex}
	For $C \in \crcom$ and $x \in C_0$, there are natural isomorphisms 
	$$\pi_0(C) \cong \pi_0(\nonabdoldnormalizer(C)) 
	\quad \pi_1(C,x) \cong \pi_1(\nonabdoldnormalizer(C),x) 
	\quad H_n(C,x) \cong \pi_n(\nonabdoldnormalizer(C),x)$$
	For $n \ge 2$.
\end{lemma}
\begin{proof}
	In \citep{Brown_Higgins_1991} they prove the analogous statement 
	for the the classifying space of a crossed complex. 
	Since the classifying space of a crossed complex $C$ 
	is defined to be the geometric realization of $\nonabdoldnormalizer(C)$, 
	and geometric realization preserves homotopy groups, the result follows. 
\end{proof}
\subsection{The top edge: the adjunction $\stcom \rightleftarrows \sabgroup$}
We first define the inclusion functor $\utcom : \sabgroup \to \stcom$.  
\begin{definition}[The functor $\utcom$]
	For $A$ a simplicial abelian group, 
	$\utcom(A)$ is the simplicial $T$-complex whose 
	underlying simplicial set is the underlying simplicial set of $A$, 
	and whose thin simplices are the sums of degenerate simplices. 
	In \citep{Ashley_78}, it is proven that 
	this gives a simplicial $T$-complex structure. 
	As maps of simplicial abelian groups preserve both sums and degenerate simplices, 
	this is functorial.
\end{definition}
We now define the left adjoint $\abtcom: \stcom \to \sabgroup$. 
Recall $\mathbb{Z}[-]$, the free simplicial abelian group functor, 
given by applying the free abelian group functor to every level of a simplicial set.
\begin{definition}[The functor $\abtcom$]
	Let $X \in \stcom$. 
	Let $|X|$ be its underlying simplicial set, 
	and $\eta$ the natural map of simplicial sets $|X| \to \mathbb{Z}[|X|]$. 
	We define
	$$\abtcom(X) = \frac{\mathbb{Z}[|X|]}
	{\langle \fill(\eta\circ h) - \eta \circ \fill(h) | h : \Lambda^n_k \to |X|\rangle}$$
	Where $\fill(h)$ denotes the 
	unique thin filler of a horn in a simplicial $T$-complex.
\end{definition}	
\begin{theorem}
	$\abtcom$ is the left adjoint to $\utcom$.
\end{theorem}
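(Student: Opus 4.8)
The plan is to exhibit directly a bijection, natural in $X\in\stcom$ and $A\in\sabgroup$,
$$\Hom_{\sabgroup}\bigl(\abtcom(X),A\bigr)\;\cong\;\Hom_{\stcom}\bigl(X,\utcom(A)\bigr),$$
by cutting the free--forgetful adjunction $\mathbb{Z}[-]\dashv|-|$ between $\sset$ and $\sabgroup$ down to the relevant sub-Hom-sets on either side. By construction $\abtcom(X)$ is the quotient of the free simplicial abelian group $\mathbb{Z}[|X|]$ by the smallest simplicial subgroup $K$ containing all the relators $r_h:=\text{fill}(\eta\circ h)-\eta\circ\text{fill}_X(h)$, one for each horn $h\colon\Lambda^n_k\to|X|$; here $\eta\colon|X|\to\mathbb{Z}[|X|]$ is the unit, $\text{fill}_X(h)$ is the unique thin filler of $h$ in the $T$-complex $X$, and $\text{fill}(\eta\circ h)$ is the unique thin filler of $\eta\circ h$ in the $T$-complex $\utcom(\mathbb{Z}[|X|])$. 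So a morphism $\abtcom(X)\to A$ is the same as a morphism $\bar f\colon\mathbb{Z}[|X|]\to A$ of simplicial abelian groups with $\bar f(r_h)=0$ for every $h$ --- vanishing on the bare generators is enough, since a simplicial homomorphism killing $r_h$ automatically kills all images of $r_h$ under the structure maps and hence all of $K$ --- equivalently a map of simplicial sets $f\colon|X|\to|A|$ whose linear extension kills every $r_h$. On the other side, a morphism $X\to\utcom(A)$ in $\stcom$ is exactly a map of simplicial sets $f\colon|X|\to|A|$ carrying thin simplices of $X$ to thin simplices of $\utcom(A)$. Thus everything reduces to showing that, for $f\colon|X|\to|A|$, the conditions ``$\bar f$ kills every $r_h$'' and ``$f$ preserves thinness'' are equivalent.

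The mechanism for this is that any morphism $g\colon Y\to Z$ of simplicial $T$-complexes preserves unique thin fillers: if $\psi=\text{fill}_Y(k)$ then $g(\psi)$ is thin and restricts to $g\circ k$ on the horn, so $g(\psi)=\text{fill}_Z(g\circ k)$ by the uniqueness clause in Axiom~2. Applying this to the $\stcom$-morphism $\utcom(\bar f)\colon\utcom(\mathbb{Z}[|X|])\to\utcom(A)$ (a morphism because $\utcom$ is a functor), and using $\bar f\circ\eta=f$, gives $\bar f\bigl(\text{fill}(\eta\circ h)\bigr)=\text{fill}_A(f\circ h)$ and hence $\bar f(r_h)=\text{fill}_A(f\circ h)-f(\text{fill}_X(h))$. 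So ``$\bar f$ kills every $r_h$'' is literally the statement that $f(\text{fill}_X(h))=\text{fill}_A(f\circ h)$ for every horn $h$ of $|X|$. Granting that: every thin simplex $t\in X_n$ of positive dimension is the unique thin filler of each of its own horns, so $f(t)=\text{fill}_A\bigl(f\circ(t|_{\Lambda^n_k})\bigr)$ is a thin filler in $\utcom(A)$, hence thin; conversely, if $f$ preserves thinness then $f(\text{fill}_X(h))$ is thin and fills $f\circ h$, so it must be the thin filler $\text{fill}_A(f\circ h)$. Hence the two conditions on $f$ define the same subset of $\Hom_{\sset}(|X|,|A|)$, and naturality in $X$ and $A$ is inherited from $\mathbb{Z}[-]\dashv|-|$; this is the desired adjunction $\abtcom\dashv\utcom$.

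I do not expect a genuine obstacle here: the argument is formal and rests only on uniqueness of thin fillers (Axiom~2) and the functoriality of $\utcom$ established just above. The two points that warrant a careful sentence are (i) that the quotient by the \emph{generated} simplicial subgroup $K$ is detected by vanishing on the bare relators $r_h$ --- clear, as faces and degeneracies of $0$ are $0$ --- and (ii) the low-dimensional and degenerate edge cases in the equivalence of conditions, where one uses Axiom~1 to dispose of degenerate simplices and the observation that every thin simplex of positive dimension is a thin filler of each of its own horns.
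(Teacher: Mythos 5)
Your proof is correct and follows essentially the same route as the paper's: reduce to the free--forgetful adjunction $\mathbb{Z}[-]\dashv|-|$ and the universal property of the quotient defining $\abtcom(X)$. You supply more detail than the paper does --- in particular the explicit check, via uniqueness of thin fillers, that $\bar f$ kills the relators $r_h$ exactly when $f$ preserves thinness --- which is the verification the paper's one-line appeal to the universal property leaves implicit.
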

\begin{proof}
	Let $f: X \to \utcom(A)$ be a map of simplicial $T$-complexes. 
	This induces a map $\mathbb{Z}[|X|] \to A$, 
	which is a map of simplicial abelian groups 
	and therefore a map of simplicial $T$-complexes. 
	In particular, this map preserves thin fillers. 
	It follows by the universal property of the quotient 
	that this induces a map $\abtcom(X) \to A$ of simplicial abelian groups.
	\\
	\indent Conversely, let $g: \abtcom(X) \to A$ 
	be a map of simplicial abelian groups. 
	Then we get a composite map of simplicial $T$-complexes 
	$X \to \utcom(\abtcom(X)) \to \utcom(A)$. 
\end{proof}
We have now verified item (2) of Theorem \ref{thm:simply-connected-crossed-complex}.
\subsection{The bottom edge: the adjunction $\crcom \rightleftarrows \ch$}
The adjunction between crossed complexes and chain complexes will involve the most detail:
\begin{definition}[The functor $\ucomplex$]
	The right adjoint $\ucomplex: \ch \to \crcom$ is given by 
	taking a chain complex $A_\bullet$ to the crossed complex
	\begin{center}
		\begin{tikzcd}[sep = large]
			{|A_0|} 
				 \ar[r, "{\id}\times{\mathbf{0}}"] 
			& {|A_0|} \times {|A_1|} 
				\ar[l,"\pi_0", shift left = 2, bend left = 15] 
				\ar[l, "\pi_0 + d", shift right = 2, bend right = 15, swap] 
			& \coprod_{|A_0|} A_2 
				\ar[l] 
			& \amalg_{|A_0|} A_3 
				\ar[l] 
			& \cdots
		\end{tikzcd}
	\end{center}
	where $|-|$ denotes the underlying set of an abelian group. 
	The composition operation on $|A_0| \times |A_1|$ is given by 
	$(a_1, \ell_1) \circ (a_2, \ell_2) = (a_1, \ell_1 + \ell_2)$. 
	The action of $A_0 \times A_1$ is trivial in the sense that each $(p, \ell)$ 
	acts as the identity morphism from the $a$ component to the 
	$a + d(\ell)$ component of $\amalg_{a \in |A_0|} A_n$. 
	In particular, if $(a, \ell)$ is an automorphism, its action is trivial. 
\end{definition}
\begin{definition}[The functor $\abcomplex$]
	Let $C = $
	\begin{center}
		\begin{tikzcd}
			C_0 
			& C_1 
				\ar[l, "s", shift right = 2, swap] 
				\ar[l, "t", shift left = 2] 
			& C_2 
				\ar[l, "\delta"] 
			& C_3 
				\ar[l, "\delta" ] 
			& \cdots 
				\ar[l, "\delta"]
		\end{tikzcd}
	\end{center}
	Be a crossed complex. 
	The action of the elements $C_1$ on $C_n$ for $n\ge 2$ 
	lets us define a certain sort of quotient of this action, 
	which we write as $C_n/C_1$:
	$$C_n/C_1 := \frac{\oplus_{p \in C_0}C_n(p)}{\langle a - \phi_\ell(a) | \ell \in C_1\rangle}$$
	Then we define the chain complex $\abcomplex(C)$ to be
	\begin{center}
		\begin{tikzcd}
			\mathbb{Z}[C_0] 
			& \mathbb{Z}[C_1]/\sim 
				\ar[l, "t - s"] 
			& C_2/C_1 
				\ar[l, "\delta"] 
			& C_3/C_1 
				\ar[l, "\delta"] 
			& \cdots 
				\ar[l, "\delta"]
		\end{tikzcd}
	\end{center}
		Here the relation $\sim$ on $\mathbb{Z}[C_1]$ 
		is generated by $g \circ f \sim g + f$ for composable $g$ and $f$ in $C_1$. 
		We note that while $C_2$ is not abelian, 
		$C_2/C_1$ must be as $\delta(C_2) \subset C_1$ acts by conjugation on $C_2$.
\end{definition}
\begin{theorem}
The functors $\abcomplex$ and $\ucomplex$ as described above form an adjoint pair.
\end{theorem}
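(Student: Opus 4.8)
The plan is to verify the adjunction by hand, unpacking morphisms on each side and matching them; no homotopy theory is involved, only bookkeeping with the two definitions. Concretely, I would show that for $C \in \crcom$ and $A_\bullet \in \ch$ both $\Hom_{\ch}(\abcomplex(C), A_\bullet)$ and $\Hom_{\crcom}(C, \ucomplex(A_\bullet))$ are naturally in bijection with the set of tuples consisting of a function $f_0 \colon C_0 \to |A_0|$, a function $f_1 \colon C_1 \to |A_1|$ satisfying $f_1(g\circ \ell) = f_1(g) + f_1(\ell)$, and a homomorphism $f_n \colon C_n/C_1 \to A_n$ for each $n \ge 2$, all subject to the relations $d\circ f_1 = f_0\circ(t-s)$, $d\circ f_2 = f_1\circ\delta$, and $d\circ f_n = f_{n-1}\circ\delta$ for $n\ge 3$. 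Here additivity of $f_1$ lets us regard it as a homomorphism $\mathbb{Z}[C_1]/\sim\ \to A_1$, and one must observe that $\delta\colon C_2 \to C_1$ descends to a map $C_2/C_1 \to \mathbb{Z}[C_1]/\sim$, which holds because $\delta$ intertwines the $C_1$-action on $C_2$ with conjugation in $C_1$ and conjugate elements of $C_1$ become equal in $\mathbb{Z}[C_1]/\sim$; this is also the point that makes $\abcomplex(C)$ a genuine chain complex, the only nontrivial verification being $(t-s)\circ\delta = 0$, which holds since $\delta(C_2)$ consists of automorphisms.

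For the chain-complex side the identification is immediate from the definition of $\abcomplex$, the chain-map equations being exactly the displayed relations. For the crossed-complex side I would argue degree by degree. The degree-$0$ part of a morphism $F\colon C \to \ucomplex(A_\bullet)$ is just a function $f_0\colon C_0 \to |A_0|$. Its degree-$1$ part is a functor from the groupoid $C_1$ to the underlying groupoid of $\ucomplex(A_\bullet)$; since the source map there is the projection, the first coordinate of this functor is forced to be $f_0\circ s$, and since composition in $\ucomplex(A_\bullet)$ only adds the $A_1$-coordinates, functoriality is equivalent to additivity of the second coordinate $f_1\colon C_1 \to |A_1|$, while compatibility with the target map $\pi + d$ is exactly $f_0\circ t - f_0\circ s = d\circ f_1$. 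In degree $n \ge 2$ the morphism $F$ restricts to an $f_1$-equivariant homomorphism $C_n \to \ucomplex(A_\bullet)_n = \coprod_{a\in|A_0|} A_n$; because the $C_1$-action on the target is by identity maps between components, such a homomorphism is precisely one that is constant on $C_1$-orbits, i.e.\ one that factors uniquely through $C_n/C_1$. Finally, commutativity of $F$ with the boundary maps of $\ucomplex(A_\bullet)$ — which sends $x\in A_2$ in the $a$-component to $(a,dx)$ and is $d$ applied componentwise in degrees $\ge 3$ — is exactly the remaining set of relations. Assembling the degreewise identifications produces the bijection $\Hom_{\ch}(\abcomplex(C), A_\bullet) \cong \Hom_{\crcom}(C, \ucomplex(A_\bullet))$, and naturality in $C$ and in $A_\bullet$ is automatic because the bijection is the identity at the level of the underlying tuples of maps.

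The one place requiring genuine care is the interface between the ``nonabelian bottom'' $(C_0, C_1, C_2)$ of a crossed complex and the ``abelianized bottom'' $(\mathbb{Z}[C_0],\, \mathbb{Z}[C_1]/\sim,\, C_2/C_1)$ of $\abcomplex(C)$: one has to check that the replacements $C_1 \rightsquigarrow \mathbb{Z}[C_1]/\sim$ and $C_2 \rightsquigarrow C_2/C_1$ discard exactly the data that a morphism into $\ucomplex(A_\bullet)$ cannot detect — this is where the triviality of the action on $\ucomplex(A_\bullet)$ and the abelianness of the $A_n$ are used — and that $s$, $t$, and $\delta$ pass to these quotients compatibly so that the chain-map conditions on one side match the morphism conditions on the other. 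Everything else reduces to a routine diagram chase through the definitions of $\abcomplex$ and $\ucomplex$.
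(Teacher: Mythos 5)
Your proof is correct and follows essentially the same route as the paper: an explicit degreewise identification of $\Hom_{\ch}(\abcomplex(C), A_\bullet)$ with $\Hom_{\crcom}(C, \ucomplex(A_\bullet))$, using that the first coordinate of the degree-$1$ map is forced, that composition in $\ucomplex(A_\bullet)$ adds the $A_1$-coordinates, and that triviality of the action forces factorization through $C_n/C_1$. If anything you are more careful than the paper, which does not explicitly verify that $\delta$ descends to the quotients $C_2/C_1 \to \mathbb{Z}[C_1]/\sim$ or that $(t-s)\circ\delta = 0$.
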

\begin{proof}
	We define a natural bijection between hom sets: 
	let $C \in \crcom$, $A_\bullet \in \ch$. 
	Given $f : \abcomplex(C) \to A_\bullet $, 
	we define $\bar{f}: C \to \ucomplex (A_\bullet)$ by:
	\begin{itemize}
		\item $\bar{f}_0 : C_0 \to |A_0|$ is the adjoint of $f_0 : \mathbb{Z}[C_0] \to A_0 $.
		\item $\bar{f}_1 : C_1 \to |A_0| \times |A_1|$ is given 
			on the first component by the composition $\bar{f}_0 \circ s$. 
			On the second component, it is the adjoint of the composition 
			$\mathbb{Z}[C_1] \to \mathbb{Z}[C_1]/\sim \to A_1$.
		\item For $n \ge 2$, recall that each $C_n$ is of the form 
			$\coprod_{p \in C_0} C_n(p)$. 
			The map $\bar{f}_n : C_n \to \coprod_{|A_0|} A_n$ is simply 
			given on each component $C_n(p)$ by the composition $C_n(p) \hookrightarrow C_n/C_1 \to A_n$.
	\end{itemize}
	Now, for the other direction: given $g : C \to \ucomplex (A_\bullet)$, 
	we define the adjoint $\bar{g}: \abcomplex(C) \to A_\bullet$ as follows:
	\begin{itemize}
		\item $\bar{g}_0: \mathbb{Z}[C_0] \to A_0$ is given by the adjoint to $g_0 : C_0 \to |A_0|$.
		\item Take the map $h: \mathbb{Z}[C_1] \to A_1$ adjoint to 
			the second component of $C_1 \to |A_0| \times |A_1|$. 
			Then for composable $\ell$ and $k$ in $C_1$, 
			we have by the definition of composition in $|A_0| \times |A_1|$ 
			that $h(\ell\circ k) = h(\ell) + h(k)$. 
			Hence, we can let $\bar{g}_1$ be the induced map $\mathbb{Z}[C_1]/\sim \to A_1$.
		\item For $n\ge 2$, we need to define a map $\bar{g}_n: C_n/C_1 \to A_n$, 
			given a map $g_n: C_n \to \coprod_{|A_0|} A_n$. 
			By definition of $C_n/C_1$, it suffices to argue that for 
			$\ell \in C_1$ and $\alpha \in C_n$, $g_n(\alpha) = g_n(\phi_\ell(\alpha))$. 
			But this follows as we said the action is trivial.
	\end{itemize}
\end{proof}
We can now conclude the proof of \ref{thm:simply-connected-crossed-complex}:
\begin{proof}[Proof of \ref{thm:simply-connected-crossed-complex}, item (3)]
	Let $C$ be a $1$-reduced crossed complex. 
	Then $\abcomplex(C)$ is the chain complex where 
	$\abcomplex(C)_0 = \mathbb{Z}$, $\abcomplex(C)_1 = 0$, $\abcomplex(C)_n = C_n$ for $n > 1$. 
	Then $\tabcomplex(C)$ is $0$ in dimensions $0$ and $1$, 
	and $\tabcomplex(C)_n = C_n$ for higher dimensions. 
	Then the map is $C \to (\ucomplex \circ \tabcomplex)(C)$ 
	is evidently the identity in dimensions $1$ and higher, 
	and in dimension $0$ is the composition 
	$\{\bullet\} \to \left|\mathbb{Z}\right| \to \left| \mathbb{Z}/\mathbb{Z} \right|$, 
	and so also an isomorphism there.
\end{proof}
\begin{proof}[Proof of \ref{thm:simply-connected-crossed-complex}, item (4)]
	We will construct a natural isomorphism 
	$\eta :\utcom \circ \doldsset\Rightarrow \nonabdoldsset \circ \ucomplex $. 
	Let $A_\bullet \in \ch$. We inductively define $\eta$ as follows:
	\begin{itemize}
		\item Since the $0$-simplices of $(\utcom \circ \doldsset)(A_\bullet)$ 
			and $(\nonabdoldsset \circ \ucomplex)(A_\bullet) $ are both $A_0$, 
			$\eta$ is the identity on $0$-simplices.
		\item The $1$-simplices of $(\utcom \circ \doldsset)(A_\bullet)$ 
			are $A_0 \oplus A_1$, with boundary maps $d_0(a_0,a_1) = a_0$ 
			and $d_1(a_0,a_1) = a_0 + d(a_1)$. 
			Since the $1$-simplices of $(\nonabdoldsset \circ \ucomplex)(A_\bullet)$ 
			are $|A_0| \times |A_1|$, we can define $\eta$ as the identity, 
			and the boundary maps agree.
		\item Let $x \in (\utcom \circ \doldsset)(A_\bullet)_n$ 
			be represented by a map $\doldnormalizer(\mathbb{Z}[\Delta^n]) \to A$, 
			and let $\alpha$ be the image of the unique nondegenerate $n$-simplex. 
			Then we let
			$$\eta(x) = (\eta(d_0(x)) , \dots , \eta(d_n(x));\alpha)$$
	\end{itemize}
	We must prove that this is a well-defined map of simplicial $T$-complexes, 
	and that it is injective and surjective. 
	This is easier to verify for the $0$ and $1$-simplices, 
	so we focus on the inductive step: 
	\begin{itemize}
		\item For $\eta$ to be well-defined we must have that 
			$d_i(\eta(d_j(x))) = d_j(\eta(d_i(x)))$ whenever 
			$d^id^j = d^jd^i$ in $\Delta$, the simplex category. 
			This follows from the inductive definition of boundaries and that 
			$d_j(d_i(x)) = d_i(d_j(x))$. Furthermore, we must have that 
			$d(\alpha) = \sum (-1)^i \text{aut}(\eta(d_i(x)))$,\footnote{
				Recall that if $x = (x_0,...,x_n;\alpha)$, then $\text{aut}(x) = \alpha$} 
			which follows from the definition of $\doldnormalizer(\mathbb{Z}[\Delta^n])$.
		\item For $\eta$ to be a map of simplicial $T$-complexes, 
			we must have that it commutes with the face and degeneracy maps, 
			and that it preserves thin simplices. 
			The former follows directly from the definitions. 
			The latter follows from the identification of 
			the thin simplices in Lemma \ref{lem:simplices-in-doldsset}.
		\item For injectivity: suppose $x,y \in (\utcom \circ \doldsset)(A_\bullet)_n$ 
			and $x \neq y$. We wish to show that $\eta(x) \neq \eta(y)$. 
			Inductively assume that $\eta$ is injective on the $n-1$ simplices. 
			Since $x \neq y$, their representing maps 
			$\doldnormalizer(\mathbb{Z}[\Delta^n]) \to A$ 
			must differ, so they must differ either in the image 
				of the nondegenerate $n$-simplex, or for some $i$ 
			we must have that $d_i(x) \neq d_i(y)$. 
			If the former, they are different as their automorphism elements are different. 
			If the latter, it follows by our inductive hypothesis.
		\item For surjectivity: let $(x_0,...,x_n; \alpha) \in (\nonabdoldsset \circ \ucomplex)(A)$. 
			Inductively assume that $\eta$ is surjective, 
			then define a map $\doldnormalizer(\mathbb{Z}[\Delta^n]) \to A$ 
			by sending the nondegenerate $n$-simplex to $\alpha$, 
			and sending the $i$th boundary to $\eta^{-1}(x_i)$.
	\end{itemize} 
\end{proof}
\section{The Quillen Adjunction $\stalg \dashv \ualg$}
\subsection{The functor $\text{AlgKan} \to \text{sTCom}$} 
We are ready to define our strictification $\stalg: \text{AlgKan} \to \stcom$. 
We will construct this as an iterated quotient of algebraic Kan complexes, 
using the work in \citep{Nikolaus2011} on the structure of the category $\text{AlgKan}$. 
\begin{definition}\label{dfn:thin-simplices}
	Let $X$ be an algebraic Kan complex. 
	We inductively call a simplex $\alpha: \Delta^n \to X$ 
	\emph{thin} if any of the following conditions hold:
	\begin{enumerate}
		\item $\alpha$ is degenerate.
		\item $\alpha$ is a distinguished filler of $X$.
		\item $\alpha$ is a composition of thin simplices.
	\end{enumerate}
\end{definition}

\begin{definition}
	Let $\alpha: \Delta^n \to X$ and $\beta: \Delta^n \to X$ 
	be thin simplices of the algebraic Kan complex $X$. 
	We call $\alpha$ and $\beta$ \textit{co-thin} if 
	there is a map $\Lambda^{n}_k \to X$ such that 
	$\alpha$ and $\beta$ make the lifting diagram
	\begin{center}
	\begin{tikzcd}[sep = large]
	\Lambda^{n}_k \ar[d] \ar[r] & X \\
	\Delta^n  \ar[ru, "\alpha{,} \beta "]
	\end{tikzcd}
	\end{center}
	commute.
\end{definition}
	In other words, two thin simplices are co-thin if they share a horn. 
\begin{lemma}\label{lem:thin-preserved}
	Let $f: X \to Y$ be a map of algebraic Kan complexes. 
	Then $f$ preserves thin simplices. 
	In particular, if $\alpha$ and $\beta$ are co-thin in $X$, 
	then $f\alpha$ and $f\beta$ are co-thin in $Y$.
\end{lemma}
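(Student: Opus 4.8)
The plan is to prove this by induction on the dimension $n$ of the simplex, matching the inductive structure of Definition \ref{dfn:thin-simplices}. The base cases $n \le 1$ are handled by noting that degeneracy is preserved by any simplicial map (so thin $0$- and $1$-simplices of types (1) map to thin simplices), that distinguished fillers are preserved by the very definition of a morphism in $\text{AlgKan}$ (the condition $f(\text{fill}_X(h)) = \text{fill}_Y(f\circ h)$), and that in low dimensions there is nothing further to say. For the inductive step, suppose $\alpha : \Delta^n \to X$ is thin and that $f$ preserves thin simplices in all dimensions $< n$; we consider the three ways $\alpha$ can be thin. If $\alpha$ is degenerate, then $f\alpha$ is degenerate, hence thin by (1). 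If $\alpha = \text{fill}_X(h)$ for some horn $h : \Lambda^n_k \to X$, then $f\alpha = f(\text{fill}_X(h)) = \text{fill}_Y(f\circ h)$ is a distinguished filler of $Y$, hence thin by (2).

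The remaining case is (3): $\alpha$ is the composition of thin simplices. Here I would unwind what ``composition of thin simplices'' means in an algebraic Kan complex — by the discussion preceding Definition \ref{dfn:thin-simplices} (and the analogy with the simplicial $T$-complex axioms), this means there is a horn $h : \Lambda^m_j \to X$ all of whose faces are thin $(m-1)$-simplices, and $\alpha$ is obtained as (a face of) $\text{fill}_X(h)$ — concretely, $\alpha$ appears as the $j$th face of the distinguished filler $\text{fill}_X(h)$. Applying $f$: by the inductive hypothesis each face of $h$ is sent to a thin simplex of $Y$ (faces are $(m-1)$-dimensional, and $m$ can be taken to be $n+1$ so that $m - 1 = n$; one needs a mild bookkeeping argument that the induction is still well-founded, e.g. by inducting on a pair (dimension, length of the witnessing construction) rather than dimension alone). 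Then $f\circ h : \Lambda^m_j \to Y$ is a horn all of whose faces are thin, $\text{fill}_Y(f\circ h) = f(\text{fill}_X(h))$ by the morphism condition, and $f\alpha$ is the corresponding face of this distinguished filler, hence a composition of thin simplices in $Y$, i.e. thin by (3).

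Finally, the ``in particular'' clause about co-thin simplices is immediate once the first claim is established: if $\alpha, \beta : \Delta^i \to X$ are thin and both fill a common horn $g : \Lambda^i_k \to X$, then $f\alpha$ and $f\beta$ are thin by the main claim, and both fill the horn $f \circ g : \Lambda^i_k \to Y$ since $f$ commutes with the inclusion $\Lambda^i_k \hookrightarrow \Delta^i$; hence $f\alpha$ and $f\beta$ are co-thin in $Y$.

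I expect the main obstacle to be case (3): making precise the notion of ``composition of thin simplices'' as used in Definition \ref{dfn:thin-simplices} and setting up an induction that is genuinely well-founded — since a composition of thin simplices may be built from thin simplices of the \emph{same} dimension, one must induct on something finer than dimension (a rank measuring how many composition steps were used to witness thinness), and verify that $f$ respects this rank. Everything else reduces to the definition of a morphism in $\text{AlgKan}$ together with the elementary fact that simplicial maps commute with faces, degeneracies, and horn inclusions.
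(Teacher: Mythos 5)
Your proof is correct and follows essentially the same route as the paper's: an induction over the three clauses of Definition \ref{dfn:thin-simplices}, using preservation of degeneracies, the defining condition $f(\text{fill}_X(h)) = \text{fill}_Y(f\circ h)$ for distinguished fillers, and the inductive hypothesis for compositions, with the co-thin claim following because simplicial maps preserve shared horns. Your extra care about well-foundedness (inducting on the length of the thinness witness rather than on dimension alone) is a point the paper glosses over, but it is the same argument.
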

\begin{proof}
	Since $f$ is a map of simplicial sets,
	if $\alpha$ and $\beta$ share a horn, 
	then $f\alpha$ and $f\beta$ will share a horn, 
	so it remains only to show that $f$ preserves thin simplices. 
	We do this inductively, checking that conditions (1)-(3) 
	in the definition of thinness are preserved by $f$:
	\begin{enumerate}
		\item if $\alpha$ is degenerate, 
			so is $f\alpha$ since $f$ is a map of simplicial sets.
		\item if $\alpha$ is a distinguished filler, 
			then so is $f\alpha$, since $f$ is a map of algebraic Kan complexes.
		\item if $\alpha$ is a composition of the thin simplices 
			$\alpha_0,...,\alpha_n$, then $f\alpha$ is a composition of 
			$f\alpha_0,...,f\alpha_n$ (because $f$ is a map of algebraic Kan complexes) 
			and each of these is thin by the inductive hypothesis.
	\end{enumerate}
\end{proof}
We can recast our definition of simplicial T-complexes in terms of thinness:
\begin{lemma}
	An algebraic Kan complex is a simplicial T-complex 
	(with thin simplices as in definition \ref{dfn:thin-simplices}) 
	if and only if it has no distinct, co-thin simplices.
\end{lemma}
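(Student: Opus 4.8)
The plan is to verify both implications directly against the three axioms defining a simplicial $T$-complex, taking as the candidate marking the inductive notion of thinness from Definition~\ref{dfn:thin-simplices} (the marking generated by the distinguished fillers, which is the one referenced in the statement).

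\emph{$T$-complex $\Rightarrow$ no distinct co-thin simplices.} Suppose $X$, with this marking, is a simplicial $T$-complex. One first notes that the marking really does match the use of ``thin'' in the $T$-complex axioms: degenerate simplices are thin by axiom~(1); a distinguished filler $\text{fill}_X(h)$ fills the horn $h$ and, being thin by clause~(2) of Definition~\ref{dfn:thin-simplices}, is forced by the uniqueness in axiom~(2) to be \emph{the} unique thin filler of $h$; and a composition of thin simplices is $d_k$ of the thin filler of a horn all of whose faces are thin, hence thin by axiom~(3) (with axiom~(1) covering any degenerate faces), by induction on dimension. Now if $\alpha,\beta:\Delta^n\to X$ are co-thin, they are both thin and both restrict to a common horn $\Lambda^n_k\to X$, so they are two thin fillers of that horn and $\alpha=\beta$ by the uniqueness clause of axiom~(2).

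\emph{No distinct co-thin simplices $\Rightarrow$ $T$-complex.} Assume $X$ has no two distinct co-thin simplices and check the three axioms for the marking of Definition~\ref{dfn:thin-simplices}. Axiom~(1) is literally clause~(1) of that definition. For axiom~(2): given $h:\Lambda^n_k\to X$, the distinguished filler $\text{fill}_X(h)$ fills $h$ and is thin by clause~(2), so a thin filler exists; and any two thin fillers of $h$ are co-thin, hence equal, so the thin filler is unique and in particular equals $\text{fill}_X(h)$. For axiom~(3): if every nondegenerate $(n-1)$-face of $h$ is thin, then all faces of $h$ are thin (degenerate ones by clause~(1)), so the composition of $h$ --- which, by the previous sentence, is $d_k\,\text{fill}_X(h)$ --- is a composition of thin simplices and hence thin by clause~(3). (The low-dimensional horns, e.g.\ $n=1$, are handled identically and are routine, as in the analogous discussion earlier in the excerpt.)

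\emph{Main obstacle.} Nothing here is deep; the one point requiring care is that Definition~\ref{dfn:thin-simplices} is phrased in terms of the \emph{chosen} fillers $\text{fill}_X$, whereas the $T$-complex axioms speak of \emph{the} (unique) thin filler of a horn. So in each direction one must first confirm that the distinguished filler of a horn coincides with its unique thin filler --- this is where the no-distinct-co-thin hypothesis is used in the second implication --- before ``composition of thin simplices'' in the sense of Definition~\ref{dfn:thin-simplices} can be identified with ``composition of a horn'' in the sense of the $T$-complex axioms.
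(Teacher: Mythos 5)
Your proof is correct and takes the same approach as the paper, whose entire proof of this lemma is the one-line remark that it ``follows directly from the definitions.'' Your write-up supplies exactly that unpacking, and your closing observation --- that one must identify the distinguished filler of a horn with its unique \emph{thin} filler before the two uses of ``composition'' can be matched up --- is the only point of substance and you handle it correctly.
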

\begin{proof}
	This follows directly from the definitions.
\end{proof}
\begin{lemma}
	The inclusion $\ualg: \stcom \to \algkan$ is fully faithful.
\end{lemma}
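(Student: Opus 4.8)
The plan is to first make the functor $\ualg$ explicit and then separate the two halves of the claim. On objects, $\ualg$ sends a simplicial $T$-complex $X$ to the algebraic Kan complex with the same underlying simplicial set, whose distinguished filler of a horn $h : \Lambda^n_k \to X$ is declared to be the \emph{unique} thin filler of $h$ provided by axiom (2); on morphisms $\ualg$ is the identity on the underlying map of simplicial sets. One checks this is well defined on morphisms because a morphism $f$ of $T$-complexes preserves thin simplices, so $f(\text{fill}_X(h))$ is a thin filler of $f\circ h$ and hence, by uniqueness, equals $\text{fill}_Y(f\circ h)$, i.e. $f$ is a morphism of algebraic Kan complexes. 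With this in place, faithfulness is immediate: a $T$-complex morphism and an algebraic Kan complex morphism are each determined by the underlying map of simplicial sets, and $\ualg$ leaves that map unchanged, so the induced map on hom-sets is injective.

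The substance is fullness. Given a morphism $g : \ualg X \to \ualg Y$ of algebraic Kan complexes, I must show that its underlying simplicial map carries thin simplices of $X$ to thin simplices of $Y$, so that it underlies a morphism $X \to Y$ in $\stcom$ (which then visibly maps to $g$ under $\ualg$). The key observation is that in a simplicial $T$-complex every thin $n$-simplex is already a distinguished filler of the associated algebraic Kan complex: if $\alpha : \Delta^n \to X$ is thin with $n \ge 1$, pick any $k$ and form the horn $h = \alpha \circ \iota^n_k : \Lambda^n_k \to X$; then $\alpha$ is a thin filler of $h$, so axiom (2) forces $\alpha = \text{fill}_{\ualg X}(h)$. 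Since $g$ preserves distinguished fillers, $g\alpha = g(\text{fill}_{\ualg X}(h)) = \text{fill}_{\ualg Y}(g \circ h)$, which is by construction the unique thin filler of the horn $g \circ h$ in $Y$, hence thin. (Thin simplices occur only in dimensions $\ge 1$, so dimension $0$ is vacuous.) Thus $g$ preserves thin simplices, which completes fullness.

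I do not anticipate a genuine obstacle here; the only points requiring a little care are the bookkeeping defining $\ualg$ on objects and morphisms and the trivial low-dimensional cases. For completeness I note a slightly longer alternative route to fullness: identify the thin simplices of $\ualg X$ in the sense of Definition \ref{dfn:thin-simplices} (degenerate, distinguished filler, or composition of such) with the thin simplices of the $T$-complex $X$ (each such class is forced to be thin by axioms (1)--(3), and conversely), and then invoke Lemma \ref{lem:thin-preserved}, which says any map of algebraic Kan complexes preserves Definition \ref{dfn:thin-simplices}-thinness. The direct argument above avoids re-deriving that identification and is the one I would write up.
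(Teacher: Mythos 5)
Your proof is correct, and your main argument is a genuinely more direct route than the one the paper takes. The paper's proof is a one-liner: it appeals to Lemma \ref{lem:thin-preserved}, which shows by induction on the three clauses of Definition \ref{dfn:thin-simplices} that any map of algebraic Kan complexes preserves (degenerate, distinguished-filler, or composite) thin simplices, and then identifies these with the thin simplices of the $T$-complex. Your argument instead isolates the key structural fact that in a simplicial $T$-complex every thin simplex of positive dimension \emph{is} the distinguished filler of any one of its own horns, by the uniqueness clause of axiom (2); fullness then follows in one step from the defining condition $g(\text{fill}_X(h)) = \text{fill}_Y(g\circ h)$ on morphisms of algebraic Kan complexes, with no induction and no need to match the two notions of thinness. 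What the paper's route buys is reuse of a lemma it needs anyway for the construction of $\stalg$; what yours buys is self-containedness and the explicit observation that thin $=$ distinguished filler in a $T$-complex, which the paper leaves implicit. Your treatment of faithfulness and of the well-definedness of $\ualg$ on morphisms (which the paper does not spell out) is also correct.
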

\begin{proof}
	For $X,Y \in \stcom$, a map $X \to Y$ is a map 
	of underlying simplicial sets which preserves the thin simplices. 
	By \ref{lem:thin-preserved}, all maps $\ualg(X) \to \ualg(Y)$ are of this form. 
\end{proof}
Given this setup, we construct our left adjoint as follows: 
for $X \in \algkan$, let $X_1$ be the colimit of 
the diagram with $X$ and an object $\Delta^n$ for every pair 
of co-thin maps $\alpha$ and $\beta$, with the two maps 
$\alpha,\beta : \Delta^n \to X$. 
More succinctly, let $I$ be an indexing set for all pairs $(\alpha_i, \beta_i)$ 
of co-thin simplices in $X$. 
Then $X_1$ is the coequalizer (in $\algkan$) of the diagram
\begin{center}
\begin{tikzcd}[sep = huge]
	\coprod_{i \in I}\Delta^{n_i} 
		\ar[r, bend left = 20, "\coprod_I\alpha_i"] 
		\ar[r, bend right = 20, "\coprod_I\beta_i"] 
	& X
\end{tikzcd}
\end{center}
This gives us an algebraic Kan complex $X_1$ equipped 
with a map $X \to X_1$, with the following properties:
\begin{lemma}
\label{lem:coequ-functorial}
	The assignment $X \mapsto X_1$ is functorial. 
	Let $\alpha$ and $\beta$ be co-thin maps of $X_1$. 
	If they factor through $X$, then they are equal. 
\end{lemma}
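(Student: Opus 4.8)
The plan is to prove the two assertions of Lemma \ref{lem:coequ-functorial} separately: functoriality of $X \mapsto X_1$, and the collapsing property.

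For functoriality, I would use the universal property of the coequalizer. Given $f: X \to Y$ in $\algkan$, note that $f$ sends co-thin pairs to co-thin pairs by Lemma \ref{lem:thin-preserved}: if $(\alpha_i, \beta_i)$ is a co-thin pair in $X$ with shared horn $h_i : \Lambda^{n_i}_{k_i} \to X$, then $(f\alpha_i, f\beta_i)$ is a co-thin pair in $Y$ with shared horn $f \circ h_i$. Thus there is an index map on the respective indexing sets (or more carefully: every co-thin pair of $X$ maps to some co-thin pair of $Y$, and $Y_1$ is a coequalizer over \emph{all} such pairs of $Y$), and the composite $X \xrightarrow{f} Y \to Y_1$ equalizes the two maps $\coprod_{i}\Delta^{n_i} \rightrightarrows X$, since for each $i$ the two legs become $f\alpha_i$ and $f\beta_i$, which are identified in $Y_1$. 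Hence $X \to Y_1$ factors uniquely through $X \to X_1$, giving $X_1 \to Y_1$; uniqueness of the factorization gives functoriality (identities and composites go to identities and composites). One subtlety to address: the colimit is taken in $\algkan$, so I should note that $\algkan$ is cocomplete (or at least has the relevant coequalizers) — this is established in \cite{Nikolaus2011}, and the forgetful functor to $\sset$ does not simply commute with these colimits, so the argument must stay at the level of the universal property rather than computing $X_1$ pointwise.

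For the second assertion, suppose $\alpha, \beta : \Delta^n \to X_1$ are co-thin and both factor through the structure map $q : X \to X_1$, say $\alpha = q\tilde\alpha$, $\beta = q\tilde\beta$ with $\tilde\alpha, \tilde\beta : \Delta^n \to X$. The key point is that $\alpha$ and $\beta$ are thin in $X_1$ and share a horn $h : \Lambda^n_k \to X_1$; I want to arrange that $\tilde\alpha$ and $\tilde\beta$ are co-thin \emph{in $X$}, so that the pair $(\tilde\alpha, \tilde\beta)$ (or one identified with it in the indexing set $I$) is among the pairs being coequalized, forcing $q\tilde\alpha = q\tilde\beta$, i.e. $\alpha = \beta$. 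To see $\tilde\alpha, \tilde\beta$ are co-thin in $X$: thinness is detected by conditions (1)--(3) of Definition \ref{dfn:thin-simplices}, and one should check that a simplex of $X$ whose image under $q$ is thin in $X_1$ was already thin in $X$ — but actually this may fail in general, so instead I would argue directly. The cleaner route: since $q$ is (by construction as a colimit of identity-on-underlying-simplicial-set type maps — wait, it is not; it genuinely identifies simplices) — the honest approach is to observe that co-thinness of $\tilde\alpha,\tilde\beta$ need not hold, and instead note that $\alpha,\beta$ being thin in $X_1$ together with factoring through $q$ means their shared horn $h$ also lifts (at least on each face) through $q$, and thinness of $\alpha$ as a simplex of $X_1$ propagates back.

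The main obstacle, and where I expect to spend the most care, is exactly this last point: controlling how thinness and co-thinness behave under the quotient map $q : X \to X_1$, since $q$ is a nontrivial colimit in $\algkan$ whose effect on underlying simplicial sets is opaque. The clean statement to aim for is: \emph{if $\alpha, \beta : \Delta^n \to X_1$ are co-thin and each factors through $q$, then they factor as $q\tilde\alpha, q\tilde\beta$ with $(\tilde\alpha,\tilde\beta)$ a co-thin pair in $X$} — equivalently, that the horn they share can be lifted to $X$ compatibly. If this lifting is available (which should follow from the explicit description of the coequalizer in $\algkan$, where a simplex of $X_1$ that factors through $q$ comes with chosen lift data), then $(\tilde\alpha,\tilde\beta)$ appears in the coequalizer diagram defining $X_1$, so $\alpha = q\tilde\alpha = q\tilde\beta = \beta$, completing the proof. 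I would structure the writeup so that this lifting lemma is isolated and proved first, after which both claims of Lemma \ref{lem:coequ-functorial} are short.
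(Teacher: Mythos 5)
Your functoriality argument is correct and follows the same route as the paper's: co-thin pairs are preserved by maps of algebraic Kan complexes (Lemma \ref{lem:thin-preserved}), so the composite $X \to Y \to Y_1$ coequalizes the defining diagram and the universal property of the coequalizer produces $X_1 \to Y_1$, with uniqueness giving compatibility with identities and composition. Your added care about the colimit being computed in $\algkan$ rather than in $\sset$ is a worthwhile point that the paper leaves implicit.

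The second claim is where your proposal does not close. You correctly isolate the crux --- whether co-thinness of $\alpha = q\tilde\alpha$ and $\beta = q\tilde\beta$ in $X_1$ can be pulled back to a co-thin pair $(\tilde\alpha,\tilde\beta)$ in $X$ --- but you never prove the lifting lemma you reduce to; you only assert that it ``should follow from the explicit description of the coequalizer.'' As you yourself observe, both halves of that lifting are in doubt: a simplex of $X$ whose image is thin in $X_1$ need not be thin in $X$ (thinness in $X_1$ can be witnessed by new distinguished fillers or new compositions created by the quotient), and the shared horn of $\alpha$ and $\beta$ in $X_1$ need not lift to a single horn of $X$ restricting to both $\tilde\alpha$ and $\tilde\beta$, since distinct boundaries in $X$ may only become equal after applying $q$. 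The paper reads the claim in the weaker sense in which it is actually used later (in showing $\stalg(X)$ has no distinct co-thin simplices): there one first pulls a co-thin pair of the colimit back to a finite stage $X_n$ \emph{in which it is already co-thin}, and the second claim of the present lemma is then just the observation that a pair co-thin in $X$ is, by the very definition of the coequalizer, identified in $X_1$. Under that reading the claim is immediate and your detour is unnecessary; under your stronger literal reading the claim requires exactly the lifting statement you left unproved. Either restrict to the weak reading and say so, or supply a proof of your lifting lemma --- as written the proposal is incomplete.
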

\begin{proof}
	Functoriality follows from the fact that maps in 
	$\text{AlgKan}$ must preserve distinguished fillers by 
	Lemma \ref{lem:thin-preserved}, so if we have a map $X \to Y$, 
	this induces maps between the diagrams which define $X_1$ and $Y_1$.
	The second claim follows from the definition of the colimit. 
\end{proof}
Of course, we are not guaranteed that $X_1$ is a simplicial $T$-complex, 
as there may be many co-thin maps which do not factor through $X$. 
So, we simply repeat the process, and obtain $X_2$ with a map 
$X_1 \to X_2$, and a similar property. 
Continuing on gives us a sequence 
$$
X \to X_1 \to X_2 \to \cdots
$$
And we define $\stalg(X)$ to be the colimit of this diagram in $\text{AlgKan}$. 
Overloading notation, we denote each of the natural maps 
$X_k \to \stalg(X)$ by $\iota$. We then have the following:
\begin{lemma}
	A simplex $\alpha : \Delta^n \to \stalg(X)$ is a distinguished filler 
	iff there is some $n$ and some distinguished filler 
	$\tilde{\alpha}: \Delta^n \to X_k$ such that 
	$\alpha = \iota \circ \tilde{\alpha}$.
\end{lemma}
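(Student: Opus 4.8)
The plan is to prove both directions by unwinding the construction of $\stalg(X)$ as a sequential colimit $X \to X_1 \to X_2 \to \cdots$ in $\algkan$, together with the characterization of distinguished fillers as "thin simplices of type (2)" in Definition \ref{dfn:thin-simplices}. The key structural input is that filtered colimits in $\algkan$ are computed levelwise on underlying simplicial sets (the forgetful functor $U_A : \algkan \to \sset$ creates filtered colimits, since the horn-filling structure is an algebraic operation compatible with filtered colimits), so that every simplex of $\stalg(X)$ — in particular every distinguished filler — arises as $\iota \circ \tilde\alpha$ for some simplex $\tilde\alpha$ of some $X_k$. The only real content is to check that this $\tilde\alpha$ can be taken to be a \emph{distinguished filler} of $X_k$, not merely an arbitrary simplex.

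First I would prove the ``if'' direction, which is the easy one: if $\tilde\alpha : \Delta^n \to X_k$ is a distinguished filler, then since each $\iota : X_k \to \stalg(X)$ is a map of algebraic Kan complexes, Lemma \ref{lem:thin-preserved} (more precisely, its clause (2)) gives that $\iota \circ \tilde\alpha$ is a distinguished filler of $\stalg(X)$. For the ``only if'' direction, suppose $\alpha : \Delta^n \to \stalg(X)$ is a distinguished filler, so $\alpha = \mathrm{fill}_{\stalg(X)}(h)$ for some horn $h : \Lambda^n_k \to \stalg(X)$. Because $\Lambda^n_k$ is a finite simplicial set and the colimit is filtered, $h$ factors through some $\iota : X_m \to \stalg(X)$ via a horn $h' : \Lambda^n_k \to X_m$. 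Now set $\tilde\alpha := \mathrm{fill}_{X_m}(h') : \Delta^n \to X_m$, which is a distinguished filler of $X_m$ by construction. Since $\iota$ is a map of algebraic Kan complexes, it preserves chosen fillers, so $\iota \circ \tilde\alpha = \iota \circ \mathrm{fill}_{X_m}(h') = \mathrm{fill}_{\stalg(X)}(\iota \circ h') = \mathrm{fill}_{\stalg(X)}(h) = \alpha$, which is exactly what we want (with $X_m$ playing the role of the $X_k$ in the statement).

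The main obstacle is purely foundational: one must be careful that filtered colimits in $\algkan$ really are computed on underlying simplicial sets and that the chosen-filler structure on the colimit is the ``obvious'' one, so that $\iota$ preserves distinguished fillers and so that a horn of the colimit out of the finite complex $\Lambda^n_k$ genuinely factors through a finite stage. This is implicit in the construction of $\stalg$ as an iterated coequalizer in $\algkan$, but I would state it explicitly (citing \cite{Nikolaus2011} for the fact that $\algkan$ is locally presentable with $U_A$ creating filtered colimits) before running the two-line argument above. Everything else is a direct unwinding of definitions, so no lengthy calculation is needed.
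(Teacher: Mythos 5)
Your proof is correct and is essentially an expansion of the paper's own argument, which simply cites the construction of filtered colimits in $\algkan$ from \cite{Nikolaus2011}; both directions (finite stages preserve distinguished fillers along $\iota$, and horns out of the finite complex $\Lambda^n_k$ factor through a finite stage) are exactly the content the paper is invoking. No gaps.
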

\begin{proof}
	This follows from the construction of filtered colimits given in \citep{Nikolaus2011}.
\end{proof}
\begin{lemma}
	A simplex $\alpha: \Delta^n \to \stalg(X)$ is thin iff 
	there is an $k \in \mathbb{N}$ and a thin simplex 
	$\tilde{\alpha}: \Delta_n \to X_k$ such that $\alpha = \iota \circ \tilde{\alpha}$.
\end{lemma}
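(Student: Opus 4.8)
The plan is to run a structural induction on Definition~\ref{dfn:thin-simplices}, which declares a simplex of an algebraic Kan complex thin precisely when it is degenerate, a distinguished filler, or a composition of thin simplices. For each of these three clauses I would transport the relevant finite piece of data along one of the canonical maps $\iota : X_k \to \stalg(X)$, using that $\stalg(X)$ is the \emph{filtered} colimit of the $X_k$, computed on underlying simplicial sets (\cite{Nikolaus2011}). The ``if'' direction is immediate: each $\iota : X_k \to \stalg(X)$ is a morphism of algebraic Kan complexes, so by Lemma~\ref{lem:thin-preserved} it carries thin simplices to thin simplices, whence $\iota\circ\tilde\alpha$ is thin whenever $\tilde\alpha$ is.

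For the converse, suppose $\alpha : \Delta^n \to \stalg(X)$ is thin and induct on the (finite) derivation witnessing this. If $\alpha = s_j\beta$ is degenerate, then $\beta$ is an $(n-1)$-simplex of $\stalg(X)$, hence an element of $\mathrm{colim}_k (X_k)_{n-1}$, hence represented by some $\tilde\beta : \Delta^{n-1} \to X_k$; then $\tilde\alpha := s_j\tilde\beta$ is degenerate, thus thin, and $\iota\circ\tilde\alpha = s_j(\iota\circ\tilde\beta) = \alpha$. If $\alpha$ is a distinguished filler, the statement is exactly the preceding lemma. The interesting clause is when $\alpha = d_j\,\mathrm{fill}_{\stalg(X)}(h)$ for a horn $h : \Lambda^{n+1}_j \to \stalg(X)$ all of whose faces $\alpha_i$ are thin. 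Here one should resist the temptation to factor $h$ itself through a single $X_m$ and then take faces: $\iota$ need not be injective on simplices---its non-injectivity is the entire purpose of the construction---so a preimage of $\alpha_i$ produced that way need not be thin. Instead, apply the inductive hypothesis to each thin face $\alpha_i$ to obtain a thin $\tilde\alpha_i$ in some $X_{k_i}$ with $\iota\circ\tilde\alpha_i = \alpha_i$; push all of these forward into a common stage $X_m$ (which preserves thinness, again by Lemma~\ref{lem:thin-preserved}); and enlarge $m$ once more so that the finitely many face-compatibility identities $d_p\tilde\alpha_i = d_{i-1}\tilde\alpha_p$ for $p < i$---which hold in $\stalg(X)$ since the $\alpha_i$ form a horn, and which are equalities of elements of $\mathrm{colim}_k (X_k)_{n-1}$---already hold in $X_m$. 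This yields a horn $\tilde h : \Lambda^{n+1}_j \to X_m$ with $\iota\circ\tilde h = h$ and all faces thin, so $\tilde\alpha := d_j\,\mathrm{fill}_{X_m}(\tilde h)$ is thin in $X_m$ by clause (3) of Definition~\ref{dfn:thin-simplices}, and since $\iota$ preserves distinguished fillers, $\iota\circ\tilde\alpha = d_j\,\mathrm{fill}_{\stalg(X)}(\iota\circ\tilde h) = d_j\,\mathrm{fill}_{\stalg(X)}(h) = \alpha$.

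The main obstacle is precisely this last clause: because $\iota$ collapses co-thin simplices, one must rebuild a thin-faced horn at a finite stage rather than transport the given one, and it is filteredness of the colimit that lets one realize all the thin preimages of the faces \emph{and} all the face-compatibility equations simultaneously at a single $X_m$. I would also take a moment to confirm that the induction is legitimate: it is structural induction on the finite derivation of thinness from Definition~\ref{dfn:thin-simplices}, with the degenerate and distinguished-filler clauses as base cases and the composition clause as the inductive step applied to the strictly smaller derivations of the thin faces, rather than an induction on simplicial degree.
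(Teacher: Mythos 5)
Your proposal is correct and follows essentially the same route as the paper: a structural induction on the three clauses of Definition~\ref{dfn:thin-simplices}, using the previous lemma for distinguished fillers and the filteredness of the colimit to realize the composition case at a finite stage. Your treatment of the composition clause is in fact more careful than the paper's one-line version, since you correctly note that one must rebuild the thin-faced horn from thin preimages of the faces (obtained via the inductive hypothesis) rather than transport the horn wholesale.
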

\begin{proof}
	We proceed again by induction. 
	\begin{enumerate}
		\item Suppose $\alpha$ is degenerate. Then it is degenerate in some $X_k$, so this follows.
		\item Suppose that $\alpha$ is a distinguished filler. Then this follows from the previous lemma.
		\item Suppose that $\alpha$ is a composition of thin simplices.
			Then we can pull the horn which $\alpha$ is a composition of 
			back to some $X_n$ in which each face is thin, 
			and therefore the composition $\tilde{\alpha}$ is thin in $X_n$.
	\end{enumerate}
	\end{proof}
\begin{lemma}
	The mapping $X \to \stalg(X)$ defines a functor 
	$\text{AlgKan} \to \text{AlgKan}$ such that 
	$\stalg(X)$ has no distinct, co-thin simplices. 
	Furthermore, $\stalg(X)$ is initial among algebraic Kan complexes 
	equipped with a map from $X$ which have no distinct, co-thin simplices.
\end{lemma}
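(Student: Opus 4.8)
The plan is to establish three things: that $\stalg$ is a functor with $X \to \stalg(X)$ natural, that $\stalg(X)$ has no distinct co-thin simplices, and that $\bigl(\stalg(X),\, X\to\stalg(X)\bigr)$ has the stated universal property.

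For functoriality, I would observe that by Lemma \ref{lem:coequ-functorial} each stage $X \mapsto X_k$ is functorial and the structure maps $X_k \to X_{k+1}$ are natural in $X$, since a map $f\colon X \to Y$ induces compatible maps between the coequalizer diagrams defining every stage. As the filtered colimits $\stalg(X) = \operatorname{colim}_k X_k$ exist in $\algkan$ (computed as in \cite{Nikolaus2011}), $\stalg$ is then a functor, and the cone maps $\iota\colon X_k \to \stalg(X)$ — in particular $X \to \stalg(X)$ — assemble into a natural transformation out of the identity functor.

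For the absence of distinct co-thin simplices, suppose $\alpha,\beta\colon\Delta^n\to\stalg(X)$ are distinct and co-thin, sharing a horn $h\colon\Lambda^n_k\to\stalg(X)$. Using the preceding description of the thin simplices of $\stalg(X)$, I would choose thin lifts $\tilde\alpha,\tilde\beta$ living over a common finite stage $X_m$, enlarging $m$ so that $h$ (having only finitely many nondegenerate faces) also factors through $X_m$. The restrictions $\tilde\alpha|_{\Lambda^n_k}$ and $\tilde\beta|_{\Lambda^n_k}$ have the same image $h$ in $\stalg(X)$, so by filteredness they already agree in some $X_{m'}$ with $m'\ge m$; hence $\tilde\alpha$ and $\tilde\beta$ are genuinely co-thin in $X_{m'}$. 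But $X_{m'+1}$ is the coequalizer identifying every co-thin pair of $X_{m'}$, so $\tilde\alpha$ and $\tilde\beta$ have equal image in $X_{m'+1}$, whence $\alpha=\beta$ in $\stalg(X)$, a contradiction. By the characterization of simplicial $T$-complexes among algebraic Kan complexes, this simultaneously shows $\stalg(X)$ is a simplicial $T$-complex.

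For the universal property, let $Y$ be an algebraic Kan complex without distinct co-thin simplices and $g\colon X\to Y$ a map; I would extend $g$ up the tower inductively. Given $g_k\colon X_k\to Y$ with $g_0=g$, for each co-thin pair $(\alpha_i,\beta_i)$ of $X_k$ the images $g_k\alpha_i,\, g_k\beta_i$ are co-thin in $Y$ by Lemma \ref{lem:thin-preserved}, hence equal; so $g_k$ coequalizes the diagram defining $X_{k+1}$ and factors uniquely through it as $g_{k+1}$, uniqueness holding because a coequalizer map is epi. Passing to the colimit produces $\bar g\colon\stalg(X)\to Y$ with $\bar g\circ\iota = g_k$ for all $k$, in particular extending $g$; and any $\phi$ extending $g$ satisfies $\phi\circ\iota = g_0$ on $X_0$, so since each $X_k\to X_{k+1}$ is epi an induction gives $\phi\circ\iota = g_k$ for all $k$, forcing $\phi=\bar g$ by the colimit's universal property. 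The only genuinely delicate point is the colimit bookkeeping in the co-thinness step — getting the two thin simplices and their shared horn to a single finite stage and then one level higher so that they become actually co-thin there; everything else is formal from the coequalizer construction together with Lemmas \ref{lem:thin-preserved} and \ref{lem:coequ-functorial}.
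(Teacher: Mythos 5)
Your proof is correct and follows the same route as the paper for the parts the paper actually writes out: functoriality via Lemma \ref{lem:coequ-functorial} and the colimit, and the elimination of distinct co-thin pairs by pulling them back to a finite stage of the tower (your filteredness bookkeeping for the shared horn is a more careful version of the paper's one-line ``they factor through some finite stage $X_n$ in which they are co-thin''). You additionally supply the initiality argument --- extending a map $X \to Y$ up the tower using Lemma \ref{lem:thin-preserved} and the coequalizer's universal property --- which the paper's proof omits entirely and only uses implicitly in the subsequent adjunction theorem, so your write-up is if anything more complete than the original.
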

\begin{proof}
	Let $\alpha$ and $\beta$ be co-thin simplices in $\stalg(X)$. 
	Then by the previous lemma they factor through some finite stage 
	$X_n$ in which they are co-thin, and by construction are 
	therefore equal in $X_{n+1}$, and therefore $\stalg(X)$. 
	Functoriality follows from functoriality of the colimit 
	and Lemma \ref{lem:coequ-functorial}.
\end{proof}
\begin{theorem}
	The mapping $X \to \stalg(X)$ defines a functor $\algkan \to \stcom$ 
	(where the thin simplices of $\stalg(X)$ are as in 
	Definition \ref{dfn:thin-simplices}) left adjoint to the forgetful functor $\stcom \to \algkan$.
\end{theorem}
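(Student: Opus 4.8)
The adjunction will be a formal consequence of the universal property already established for $\stalg$. Recall that, by the lemma characterizing simplicial $T$-complexes among algebraic Kan complexes, an object of $\algkan$ lies in the essential image of the fully faithful functor $\ualg$ precisely when it has no distinct co-thin simplices; thus $\ualg$ identifies $\stcom$ with the full subcategory $\T \subseteq \algkan$ of such objects, and these $\T$-objects are exactly the ones over which the universal property of $\stalg(X)$ ranges. In particular, for any $Y \in \stcom$ the algebraic Kan complex $\ualg(Y)$ has no distinct co-thin simplices, so the initiality statement for $\stalg(X)$ applies to it.

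The first step is then to check that $\iota_X : X \to \ualg(\stalg(X))$ is a universal arrow from $X$ to $\ualg$: given $Y \in \stcom$ and a morphism $f : X \to \ualg(Y)$ of algebraic Kan complexes, the object $(\ualg(Y), f)$ is one of the algebraic Kan complexes under $X$ with no distinct co-thin simplices, so initiality of $(\stalg(X),\iota_X)$ yields a unique morphism $\bar f : \stalg(X) \to \ualg(Y)$ of algebraic Kan complexes with $\bar f \circ \iota_X = f$. Since $\stalg(X)$ and $Y$ are simplicial $T$-complexes and $\ualg$ is fully faithful (any map of underlying algebraic Kan complexes of simplicial $T$-complexes preserves thin simplices), $\bar f = \ualg(g)$ for a unique morphism $g : \stalg(X) \to Y$ in $\stcom$. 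By the standard criterion for the existence of adjoints, assembling these universal arrows over all $X$ produces the left adjoint, with unit $\iota$. Concretely, the assignment $f \mapsto g$ and its inverse $g \mapsto \ualg(g)\circ\iota_X$ are mutually inverse bijections $\Hom_{\algkan}(X,\ualg(Y)) \cong \Hom_{\stcom}(\stalg(X),Y)$, the two composites being identities by the uniqueness clause in the universal property.

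The remaining step is naturality of this bijection: naturality in $Y$ is immediate from functoriality of $\ualg$, and naturality in $X$ follows from functoriality of $\stalg$ together with the naturality of $\iota$ (that is, $\iota_X \circ j = \stalg(j)\circ \iota_{X'}$ for $j : X' \to X$), which was already recorded as part of the functoriality of the colimit construction. I do not expect a genuine obstacle at this point: all the substantive content lies in the construction of $\stalg$ and the verification of its universal property, which has been carried out above; what is left is the bookkeeping of converting a universal property into an adjunction, the only delicate point being the identification of $\stcom$ with the full subcategory $\T \subseteq \algkan$ via full faithfulness of $\ualg$, so that the two notions of ``unique lift'' (in $\stcom$ versus in $\algkan$) coincide. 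Note that this argument in fact shows more, namely that $\ualg$ exhibits $\stcom$ as a reflective subcategory of $\algkan$ with reflector $\stalg$.
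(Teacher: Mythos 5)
Your proposal is correct and follows essentially the same route as the paper: both derive the adjunction formally from the initiality of $\stalg(X)$ among algebraic Kan complexes under $X$ with no distinct co-thin simplices, combined with the identification of $\stcom$ as a full subcategory of $\algkan$ via the fully faithful $\ualg$. Your write-up simply spells out the bookkeeping (the hom-set bijection, its naturality, and the reflective-subcategory observation) that the paper leaves implicit.
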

\begin{proof}
	Any map $X \to Y$ where $Y$ is a simplicial $T$-complex 
	factors uniquely through $\stalg(X)$, so this defines the 
	left adjoint to the forgetful functor 
	(as it is the inclusion of a full subcategory).
\end{proof}
We would like to take this adjunction and 
bump it up to the level of Quillen adjunction. 
To do this we will need to understand the 
model structure on the the category of simplicial T-complexes.
\subsection{Model Structure on Simplicial T-Complexes}
In \citep{Brown_Golasinski_89} it was shown that the category 
of crossed complexes admits a model structure with distinguished maps as follows:
\begin{itemize}
	\item The weak equivalences are those which induce 
		an isomorphism on $\pi_0$, $\pi_1$, and $H_n$ for $n \ge 2$.
	\item The fibrations are those maps $f: C \to D$ such that 
		$f$ given a $p \in C_0$ and $y \in D_n$ with 
		$\delta^n(y) = f(p)$, there exists a $z \in C_n$ with $f(z) = y$. 
	\item The cofibrations are all those maps which lift on the left of the acyclic fibrations.
\end{itemize}
From this, we easily obtain
\begin{theorem}
	The category of simplicial T-complexes admits a model structure where
	\begin{itemize}
		\item The weak equivalences are the weak equivalences of underlying simplicial sets.
		\item The fibrations are the Kan fibrations of underlying simplicial sets.
	\end{itemize}
\end{theorem}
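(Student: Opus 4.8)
The plan is to transport the known model structure on crossed complexes from \cite{Brown_Golasinski_89} across the chain of equivalences $\crcom \simeq \stinfty \simeq \stcom$ supplied by Theorems \ref{crossed-strict-equivalence} and \ref{thm:simplicial-crossed-equivalence} (or directly via the equivalence $\nonabdoldnormalizer : \crcom \rightleftarrows \stcom$), and then check that the transported model structure has the advertised description in terms of the underlying simplicial set. Since an equivalence of categories creates a model structure from one on the other side, the existence is immediate; the real content is identifying the weak equivalences and fibrations. First I would recall that, by Lemma \ref{lem:weak-equiv-t-complex}, for $C \in \crcom$ the homotopy groups $H_n(C,x)$ agree with $\pi_n(\nonabdoldnormalizer(C),x)$; hence a map $C \to D$ is a weak equivalence in the Brown--Golasi\'nski model structure (i.e.\ induces isomorphisms on all $H_n$) if and only if $\nonabdoldnormalizer(C) \to \nonabdoldnormalizer(D)$ induces isomorphisms on all simplicial homotopy groups, for every choice of basepoint. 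That is exactly a weak equivalence of underlying simplicial sets, giving the first bullet.

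For the fibrations, I would unwind the definition given in the excerpt: $f : C \to D$ is a fibration of crossed complexes iff for every $p \in C_0$ and $y \in D_n$ with $\delta^n(y) = f(p)$ (appropriately interpreted in low degrees as a source/target condition) there is a lift $z \in C_n$. The task is to show that $\nonabdoldnormalizer(f)$ is then a Kan fibration of simplicial sets, and conversely. The natural route is to use the explicit description of the $n$-simplices of $\nonabdoldnormalizer(C)$ as tuples $(x_0,\dots,x_n;\alpha)$ built from compatible faces together with an automorphism element $\alpha \in C_n$: lifting a horn $\Lambda^n_k \to \nonabdoldnormalizer(C)$ against its filler in $\nonabdoldnormalizer(D)$ amounts, after choosing the faces compatibly (which is possible because the missing face is determined up to the boundary relation $\delta(\alpha) = \pm\sum(-1)^i x_i$ type formula), to lifting a single element of $C_n$ along $f$ with prescribed $\delta$, which is precisely the crossed-complex fibration condition. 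Running this argument in reverse — taking an arbitrary element $y \in D_n$ with prescribed boundary and realizing it as (part of) a horn-filling problem — shows the converse, so $f$ is a crossed-complex fibration iff $\nonabdoldnormalizer(f)$ is a Kan fibration. Combined with the fact that $\nonabdoldnormalizer$ is an equivalence, this shows the transported fibrations are exactly the Kan fibrations of underlying simplicial sets.

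Finally I would note that once weak equivalences and fibrations are pinned down, the cofibrations are forced (they are the maps with the left lifting property against acyclic fibrations), so the model structure is completely determined and the theorem follows; one should also remark that this is automatically a model structure because it is the image of one under an equivalence of categories, so no lifting/factorization axioms need to be reverified by hand. The step I expect to be the main obstacle is the fibration comparison: the boundary formulas for $\nonabdoldnormalizer$ have different shapes in dimensions $2$, $3$, and $\ge 4$ (and degenerate further in dimensions $0,1$), and translating a horn-filling condition into the single-element lifting condition $\delta^n(z) = \cdots$ requires care with the groupoid action $\phi$ and with which face of the horn is missing; the $k=0$ and $k=n$ cases, where the constraint involves source/target data rather than a pure $\delta$-condition, will need separate attention. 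A clean way to sidestep some of this bookkeeping is to instead cite that $\nonabdoldnormalizer$ and its inverse are part of the Quillen-equivalence data already in the literature (\cite{Ashley_78}), so that the model structure on $\stcom$ is defined to be the transported one and only the identification of its weak equivalences and fibrations in simplicial terms — the content of Lemma \ref{lem:weak-equiv-t-complex} plus the fibration unwinding — remains to be done.
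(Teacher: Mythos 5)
Your proposal is correct and follows essentially the same route as the paper: transport the Brown--Golasi\'nski model structure across the equivalence with $\crcom$, identify the weak equivalences via Lemma \ref{lem:weak-equiv-t-complex}, and identify the fibrations as Kan fibrations of underlying simplicial sets. The only difference is that where you sketch the horn-filling/boundary-formula bookkeeping for the fibration comparison by hand (and rightly flag it as the delicate step), the paper simply cites Proposition 6.2 of \cite{Brown_Higgins_1991} for that identification.
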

\begin{proof}
	The model structure is the one transferred along 
	the equivalence of categories with the category of crossed complexes. 
	By Proposition 6.2 in \citep{Brown_Higgins_1991}, 
	the fibrations are the Kan fibrations on underlying simplicial sets. 
	By \ref{lem:weak-equiv-t-complex}, the weak equivalences are 
	the weak equivalences on underlying simplicial sets.
\end{proof}
\begin{corollary}
	The adjunction $\stalg \dashv \ualg$ is a Quillen adjunction. 
\end{corollary}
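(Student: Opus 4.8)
The plan is to verify the adjunction is Quillen by checking that the right adjoint $\ualg$ preserves fibrations and acyclic fibrations (equivalently, trivial fibrations) — one of the standard equivalent characterizations of a Quillen adjunction — and to reduce everything to the underlying simplicial sets. The key observation is that the composite $U_A \circ \ualg : \stcom \to \sset$ is exactly the functor sending a simplicial $T$-complex to its underlying simplicial set: $\ualg$ leaves the underlying simplicial set of $X$ unchanged (it merely records the unique thin fillers as the chosen horn fillers), and $U_A$ then discards the filler data.

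From there I would recall the precise shape of Nikolaus's model structure on $\algkan$ from \cite{Nikolaus2011}: it is the model structure in which a map is a fibration (resp. weak equivalence) precisely when its underlying map of simplicial sets is a Kan fibration (resp. weak equivalence), i.e. fibrations and weak equivalences are created by $U_A$. Combining this with the theorem just proved — that a map in $\stcom$ is a fibration (resp. weak equivalence) exactly when its underlying simplicial map is a Kan fibration (resp. weak equivalence) — the conclusion is immediate: if $f$ is a fibration in $\stcom$, then $U_A\ualg(f)$ is a Kan fibration, hence $\ualg(f)$ is a fibration in $\algkan$; and the same argument applied to trivial fibrations shows $\ualg$ preserves those as well. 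Hence $\ualg$ is a right Quillen functor and $\stalg \dashv \ualg$ is a Quillen adjunction.

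I do not expect a genuine obstacle: the statement is a bookkeeping consequence of the two model-structure descriptions, both phrased in terms of underlying simplicial sets, which is precisely why it appears as a corollary. The only points requiring a little care are citing the exact form of the model structure on $\algkan$ and confirming that $U_A \circ \ualg$ is the underlying-simplicial-set functor, both of which are routine; no properties of the (rather intricate) construction of $\stalg$ itself are needed beyond knowing that $\ualg$ is its right adjoint.
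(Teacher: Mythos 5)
Your proof is correct and is exactly the argument the paper has in mind (the paper states this as an immediate corollary with no written proof): both model structures have fibrations and weak equivalences created by the underlying-simplicial-set functor, so the inclusion $\ualg$ preserves fibrations and acyclic fibrations and is therefore right Quillen.
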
  
We recall from earlier that the forgetful functor 
$U_A: \algkan \to \sset$ is the right adjoint of a Quillen adjunction.
\begin{definition}
	The functor $F_A: \sset \to \algkan$ is the left adjoint to 
	the forgetful functor $U_A: \algkan \to \sset$, 
	as defined in \citep{Nikolaus2011}.
\end{definition}
\begin{definition}
	We define $\st : \sset \to \stcom$ as the composition 
	$\stalg \circ F_A$ and $\ust: \stcom \to \sset$ as $U_A \circ \ualg$.
\end{definition}
\begin{corollary}
	The adjunction $\st \dashv \ust$ is a Quillen adjunction. 
\end{corollary}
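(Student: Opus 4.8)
The plan is to observe that $\st \dashv \ust$ is, by construction, the composite of two adjunctions each of which is already known to be Quillen, and then invoke the fact that composites of Quillen adjunctions are Quillen. First I would recall that $\st = \stalg \circ F_A$ and $\ust = U_A \circ \ualg$, and that $(\st, \ust)$ is genuinely an adjoint pair because adjunctions compose: its unit and counit are assembled by whiskering from those of $F_A \dashv U_A$ and $\stalg \dashv \ualg$. The one piece of bookkeeping worth noting is that the model category sitting in the middle is $\algkan$ with the model structure of \cite{Nikolaus2011}; this is the same model structure over which $\stalg \dashv \ualg$ was declared Quillen in the previous corollary, so the two Quillen adjunctions are composable.

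Next I would check that the left adjoint $\st$ is a left Quillen functor. By the previous corollary, $\stalg : \algkan \to \stcom$ is left Quillen, hence preserves cofibrations and acyclic cofibrations; and $F_A : \sset \to \algkan$ is left Quillen, being the left adjoint of the Quillen equivalence of \cite{Nikolaus2011}, so it too preserves cofibrations and acyclic cofibrations. Since these two classes of maps are each closed under applying successive functors that preserve them, the composite $\st = \stalg \circ F_A$ preserves cofibrations and acyclic cofibrations, and is therefore left Quillen. Equivalently, one could verify the condition on the right adjoint: $U_A$ is right Quillen, and $\ualg$ sends fibrations (which in $\stcom$ are exactly the Kan fibrations of underlying simplicial sets) and acyclic fibrations to fibrations and acyclic fibrations in $\algkan$, so $\ust = U_A \circ \ualg$ preserves fibrations and acyclic fibrations.

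There is no real obstacle here: the statement is a formal consequence of the two preceding corollaries together with the observation that both ``adjunction'' and ``(left) Quillen functor'' are properties closed under composition. The only point deserving a moment's care is the compatibility of the model structures on the intermediate category $\algkan$, which is immediate as there is only one such model structure in play throughout.
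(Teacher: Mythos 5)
Your proof is correct and follows exactly the paper's argument: write $\st \dashv \ust$ as the composite of $F_A \dashv U_A$ (Quillen by \cite{Nikolaus2011}) with $\stalg \dashv \ualg$ (Quillen by the preceding corollary), and use that composites of Quillen adjunctions are Quillen. The extra verification you sketch of the left/right Quillen conditions is more detail than the paper gives but changes nothing substantive.
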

\begin{proof}
	In \citep{Nikolaus2011} it is proven that $F_A \dashv U_A$ 
	is a Quillen adjunction. As a composition of Quillen adjunctions 
	is a Quillen adjunction, the result follows.
\end{proof}
The functor $\st$ therefore represents the universal 
``strictification'' of the $\infty$-groupoid represented 
by an arbitrary simplicial set. It is natural to ask 
how lossy this functor is. An easy general example is the following:
\begin{theorem}
	Let $X$ be an $n$-skeletal simplicial set. Then $\st(X)$ is $(n+1)$-coskeletal. 
\end{theorem}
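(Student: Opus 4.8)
The plan is to reduce the statement to two facts: first, that $\st(X)$ enjoys the special property that every simplex of dimension at least $n+1$ is thin; and second, that any simplicial $T$-complex with this property is $(n+1)$-coskeletal. Recall that a simplicial set $Y$ is $(n+1)$-coskeletal exactly when, for every $m \ge n+2$, each map $\partial\Delta^m \to Y$ extends uniquely to $\Delta^m$; this reformulation is the only concrete input we need.

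For the first fact, write $\st(X) = \stalg(F_A(X))$ and recall from \cite{Nikolaus2011} that $F_A(X)$ is assembled from $X$ by transfinitely attaching chosen fillers: at each stage one forms a pushout of the current simplicial set along a coproduct of horn inclusions $\Lambda^m_k \hookrightarrow \Delta^m$, and the only nondegenerate simplices such a pushout introduces are the attached filler $\Delta^m$ itself, of dimension $m$, together with its $k$-th face, of dimension $m-1$. Consequently every nondegenerate simplex of $F_A(X)$ is either a nondegenerate simplex of $X$, a distinguished filler, or the $k$-th face of a distinguished filler. I would then prove, by induction on the stage at which a simplex is created, that every nondegenerate simplex of $F_A(X)$ of dimension $\ge n+1$ is thin in the sense of Definition \ref{dfn:thin-simplices}: it cannot be a nondegenerate simplex of $X$, since $X$ is $n$-skeletal; it is thin if it is a distinguished filler; and if it is the $k$-th face of a distinguished filler of a horn $\Lambda^m_k$, then the remaining faces of that horn are nondegenerate $(m-1)$-simplices created at strictly earlier stages, hence thin by the inductive hypothesis, so our simplex is a composite of thin simplices and therefore thin by clause (3). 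Since degenerate simplices are thin by clause (1), every simplex of $F_A(X)$ of dimension $\ge n+1$ is thin. The canonical map $\iota\colon F_A(X) \to \st(X)$ is a morphism of algebraic Kan complexes, so it preserves thin simplices by Lemma \ref{lem:thin-preserved}, and it is surjective in each simplicial degree, being a sequential colimit of degreewise surjections; hence every simplex of $\st(X)$ of dimension $\ge n+1$ is thin.

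For the second fact, let $Y$ be a simplicial $T$-complex whose simplices of dimension $\ge n+1$ are all thin, fix $m \ge n+2$, and let $\sigma\colon \partial\Delta^m \to Y$ be arbitrary; its faces $\sigma_0,\dots,\sigma_m$ are $(m-1)$-simplices with $m-1 \ge n+1$, hence thin. Choose any index $k$ and let $\tau\colon\Delta^m\to Y$ be the unique thin filler of the horn $\sigma|_{\Lambda^m_k}$ guaranteed by axiom (2); then $d_j\tau = \sigma_j$ for all $j \ne k$, and since every nondegenerate face of that horn is thin, axiom (3) makes $d_k\tau$ thin. A routine simplicial-identity calculation shows that $d_k\tau$ and $\sigma_k$ have the same boundary, so for any $i$ they are both thin fillers of the horn obtained by restricting that common boundary to $\Lambda^{m-1}_i$; uniqueness of thin fillers forces $d_k\tau = \sigma_k$, and therefore $\partial\tau = \sigma$. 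Uniqueness of the filler uses the same principle: any filler of $\sigma$ is a thin $m$-simplex restricting to $\sigma|_{\Lambda^m_k}$ on $\Lambda^m_k$, so axiom (2) leaves only one possibility. Combining the two facts shows that $\st(X)$ is $(n+1)$-coskeletal.

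I expect the only real obstacle to be organizational rather than conceptual: making precise the cell structure of $F_A(X)$ from the construction in \cite{Nikolaus2011} and running the induction on creation stage cleanly. The genuinely homotopical content is slight, amounting to the observation noted above that a simplicial $T$-complex generated by its $n$-skeleton together with thin fillers is forced to be $(n+1)$-coskeletal.
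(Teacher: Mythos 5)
Your proposal is correct and follows essentially the same route as the paper: first show that every simplex of $\st(X)$ in dimension $\ge n+1$ is thin (using the cell structure of $F_A(X)$ and clause (3) of the definition of thinness for the faces of attached fillers), then deduce $(n+1)$-coskeletality from existence and uniqueness of thin horn fillers together with axiom (3). You supply more detail than the paper's proof, in particular the induction on creation stage in $F_A(X)$ and the explicit argument that $d_k\tau = \sigma_k$ via uniqueness of thin fillers of a shared horn, but the underlying argument is the same.
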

\begin{proof}
	Note that by construction of $F_A$, 
	every simplex of dimension $n+1$ or higher is degenerate, 
	a distinguished filler, or a composition of simplices 
	which are either degenerate or distinguished fillers. 
	It follows then that in $\st(X)$ every simplex of 
	dimension $n+1$ or higher is thin. 
	Let $B: \partial \Delta^{N} \to X$ be the inclusion of a boundary, 
	where $N \ge n + 2$. Then note that a filler of any horn 
	must be a filler for the boundary, by axiom (3). 
	Of course, any filler for the boundary is also a 
	filler for each horn. Hence, by existence and uniqueness of horn fillers, 
	it must be the case that there is a unique extension of $B$ to $\Delta^N$.
\end{proof}
\begin{remark}
	This generalizes that if $X$ is $n$-skeletal, then $\mathbb{Z}[X]$ is $(n+1)$-coskeletal.
\end{remark}
\begin{corollary}
	Let $X$ be an $n$-skeletal simplicial set. 
	The map $\pi_\bullet(X) \to \pi_\bullet(\st(X))$ 
	contains in its kernel $\pi_{\ge n+1} (X)$.
\end{corollary}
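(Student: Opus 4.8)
The plan is to prove the slightly stronger statement that $\pi_j(\st(X),x)=0$ for every $j\ge n+1$ and every basepoint $x$; the corollary is then immediate, since the unit map $X\to\ust(\st(X))$ induces a map on homotopy groups whose target vanishes in those degrees.

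First I would extract from the proof of the preceding theorem the observation that, since $X$ is $n$-skeletal, \emph{every} simplex of $\st(X)$ of dimension $\ge n+1$ is thin. I would also note that $\st(X)$ is a simplicial $T$-complex, hence in particular a Kan complex, so that $\pi_j(\st(X),x)$ may be computed simplicially: a class is represented by a $j$-simplex $\sigma\colon\Delta^j\to\st(X)$ all of whose faces are the constant $(j-1)$-simplex at $x$.

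The heart of the argument is then a uniqueness observation. The iterated degeneracy $\tau$ of $x$ is a $j$-simplex all of whose faces are also the constant $(j-1)$-simplex at $x$ (a routine check with the simplicial identities for $d_i s_0$), and both $\sigma$ and $\tau$ are thin --- $\sigma$ by the skeletality observation above, $\tau$ because it is degenerate. Hence $\sigma$ and $\tau$ are two thin fillers of the same horn $\Lambda^j_k\to\st(X)$ (for any $0\le k\le j$, the horn obtained by restricting this common constant boundary), so by uniqueness of thin fillers (axiom (2) of a simplicial $T$-complex) we conclude $\sigma=\tau$. Thus $[\sigma]=0$ in $\pi_j(\st(X),x)$, and since $j\ge n+1$ and $x$ were arbitrary, $\pi_j(\st(X))=0$ for all $j\ge n+1$. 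Consequently $\pi_{\ge n+1}(X)$ lies in the kernel of $\pi_\bullet(X)\to\pi_\bullet(\st(X))$.

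There is no genuine obstacle here; the only two minor points requiring care are (i) that the underlying simplicial set of $\st(X)$ really is fibrant, so that its simplicial homotopy groups agree with the homotopy groups of its realization --- this is automatic since simplicial $T$-complexes are Kan complexes --- and (ii) the bookkeeping showing that the degenerate simplex $\tau$ has constant boundary. One could alternatively invoke the previously established $(n+1)$-coskeletality of $\st(X)$ to dispatch $\pi_{\ge n+2}$ and then treat only $\pi_{n+1}$ by the thin-filler argument, but the uniform argument above handles all degrees at once.
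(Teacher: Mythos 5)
Your argument is correct and rests on the same two facts the paper uses for the preceding theorem (every simplex of $\st(X)$ in dimension $\ge n+1$ is thin, plus uniqueness of thin fillers); you simply apply them directly to homotopy-class representatives rather than routing through the $(n+1)$-coskeletality statement, and in fact obtain the slightly sharper conclusion that any spherical $j$-simplex is literally \emph{equal} to the degenerate one. The only quibble is in your closing aside: $(n+1)$-coskeletality already kills $\pi_j$ for all $j \ge n+1$, not just $j \ge n+2$, so the alternative route needs no separate treatment of $\pi_{n+1}$.
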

This motivates the following definition:
\begin{definition}
	Let $X$ be a simplicial set. 
	The \emph{higher homotopy groups} of $X$ 
	are the graded pieces of the kernel of the map 
	$\pi_\bullet(F_A(X)) \to \pi_\bullet(\st(X))$.
\end{definition}
We call these groups the ``higher" homotopy groups 
because their dimension may exceed the geometric dimension of $X$; 
they are generated from lower dimensional data.
\begin{example}
	Let $X$ be $S^1 \lor S^2$. 
	Then the higher homotopy groups of $X$ are precisely $\pi_n(X)$ for $n \ge 3$. 
\end{example}

\begin{remark}
	A natural expectation would be that ``strictification'' 
	of $\infty$-groupoids should be idempotent - 
	after all, if $X$ is an $\omega$-groupoid, 
	sitting inside the category of weak $\infty$-groupoids (spaces), 
	the universal $\omega$-groupoid with a map from $X$ is just $X$ itself. 
	And indeed, the adjunction $\stalg \dashv \ualg$ is idempotent. 
	However, this adjunction is ``homotopically incorrect:'' 
	$\stalg$ does not preserve weak equivalences, 
	and so it is necessary to take the derived functors in order to 
	get the correct strictification adjunction. 
	The derived adjunction is \textit{not} idempotent: 
	for instance, $K(\mathbb{Z},2)$ ``is'' a strict $\infty$-groupoid 
	(that is, has a model as a simplicial $T$-complex). 
	However, the strictification is homotopy equivalent to 
	(the basepoint component of) $\mathbb{Z}[K(\mathbb{Z},2)]$, 
	by Theorem \ref{thm:simply-connected-crossed-complex}.
\end{remark}

\section{The induced coalgebra $\st(X)$ detects the homotopy type of $X$}
The adjunction $\st \dashv \ust$ induces a free functor 
$\freecoalg: \sset \to \text{CoAlg}(\st)$, 
where $\text{CoAlg}(\st)$ is the category of coalgebras 
for the comonad $\st \circ \ust$ induced by the adjunction $\st \dashv \ust$.  
The free functor $\freecoalg$ is given on objects by 
$$
\freecoalg(X) = \left( \st(X), \st(\eta_X) : \st(X) \to (\st \circ \ust) (\st (X)) \right)
$$
where $\eta$ is the unit of the adjunction $\st \dashv \ust$. 
We will show in this section that $\freecoalg(X)$ 
determines the homotopy type of $X$. 
In the following section we will upgrade to an equivalence of quasicategories. 
The precise theorem in this section is the following:
\begin{theorem}\label{thm:main-theorem-basic}
	For $X \in \sset$, denote by $\st^\bullet(X)$ 
	the cosimplicial object of $\sset$ whose $n$th space is $(\ust \circ \st)^{n+1}(X)$ 
	and whose coface and codegeneracy maps are induced by the coalgebra structure of $\st(X)$. 
	The natural map
	$$
		X \to \holim \, (\st^\bullet(X))
	$$
	is a weak equivalence.  
\end{theorem}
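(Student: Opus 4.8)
The plan is to reduce this to the simply-connected case established in \cite{Blomquist_Harper_2019} via a Bousfield--Kan-style completion argument, using the crossed-complex/chain-complex bridge of Theorem~\ref{thm:simply-connected-crossed-complex} to identify the derived version of $\st$ with the free simplicial abelian group functor on simply-connected pieces. First I would observe that the cosimplicial object $\st^\bullet(X)$ is the canonical cosimplicial resolution associated to the comonad $\st \circ \ust$ (equivalently, to the monad $\ust \circ \st$ on $\sset$), so that $\holim \st^\bullet(X)$ is by definition the \emph{completion} of $X$ with respect to this (co)monad, in the sense of Bousfield--Kan's $R$-completion machinery. The statement to prove is then precisely that $X$ is complete with respect to the derived strictification monad. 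I would set up the tower of partial totalizations $\Tot_s \st^\bullet(X)$ and analyze the associated spectral sequence; the $E_2$-page is computed from the ``derived functors'' of the monad, which by the triple cohomology / André--Quillen formalism are controlled by $\pi_\bullet$ of $\ust \circ \st$ applied to $X$.

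The key computational input is understanding $\ust \circ \st$ on homotopy groups. By the results of Section~4, if $X$ is $n$-reduced then $\st(X)$ agrees (up to weak equivalence) with the simplicial $T$-complex coming, via $\nonabdoldnormalizer$, from a $1$-reduced crossed complex, and by item~(3) of Theorem~\ref{thm:simply-connected-crossed-complex} together with item~(4), this is detected by the reduced chain complex, i.e.\ by $\doldsset \doldnormalizer \mathbb{Z}[X] \simeq \mathbb{Z}[X]$ on the simply-connected part. So on simply-connected spaces the derived strictification monad \emph{is} the Bousfield--Kan $\mathbb{Z}$-completion monad, and the theorem for those $X$ is exactly the classical fact (Bousfield--Kan) that a simply-connected space is $\mathbb{Z}$-complete --- equivalently the main result of \cite{Blomquist_Harper_2019} transported along the Dold--Kan equivalences. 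The general case then follows by a fracture/induction argument up the Postnikov or lower-central-series filtration: I would filter $X$ (or rather the monad resolution) so that the successive layers are simply-connected, apply the simply-connected case to each layer, and assemble via the convergence of the $\Tot$-tower. The fundamental-group part is handled separately: on $\pi_0$ and $\pi_1$ the functor $\ust\circ\st$ is essentially the identity (strict $\infty$-groupoids see $\pi_1$ faithfully, cf.\ the discussion of \cite{Ara_2013}), so the bottom of the tower contributes no correction.

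The main obstacle, and where most of the work will go, is \textbf{convergence of the totalization tower} in the absence of any nilpotence or simple-connectedness hypothesis --- this is exactly the phenomenon that makes Bousfield--Kan $\mathbb{Z}$-completion subtle for non-nilpotent spaces, and where naive $\lim^1$ arguments fail. I expect to need a careful filtration argument showing that although $\ust \circ \st$ does not preserve weak equivalences and is far from idempotent (cf.\ the remark on $K(\mathbb{Z},2)$), the \emph{relative} derived functors appearing in the spectral sequence vanish in a range growing with the cosimplicial degree, so the tower is pro-constant on each homotopy group. Concretely I would: (i) show $\pi_k$ of the $s$-th layer of the resolution depends only on the $(k{+}c)$-skeleton of $X$ for an explicit constant, using the coskeletality result (Theorem on $n$-skeletal $\Rightarrow (n{+}1)$-coskeletal); (ii) deduce that the $\Tot$-tower satisfies the Mittag-Leffler condition degreewise; (iii) invoke the simply-connected comparison to identify the $E_\infty$ term with $\pi_\bullet(X)$ after reindexing. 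Steps (i)--(ii) are the technical heart; step (iii) is where Theorem~\ref{thm:simply-connected-crossed-complex} and \cite{Blomquist_Harper_2019} are plugged in.
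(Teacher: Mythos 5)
Your high-level strategy --- identify the monad with Bousfield--Kan $\mathbb{Z}$-completion on simply connected spaces, invoke the classical convergence theorem there, and observe that $\pi_0$ and $\pi_1$ are seen faithfully by strictification --- matches the ingredients the paper actually uses (Lemmas \ref{lem:sc-strictification-abelianization}, \ref{lem:homology-strictification-isomorphism}, \ref{lem:strictification-preserves-groupoids} and Theorem \ref{thm:BK-main-theorem}). The gap is in the assembly step. You propose to filter $X$ by a Postnikov or lower-central-series filtration with simply connected layers and then prove convergence of the $\Tot$-tower by a Mittag--Leffler/pro-constancy analysis. Two problems. First, for a general non-nilpotent $X$ there is no such filtration with simply connected (or even principal) layers --- the obstruction is exactly the $\pi_1$-action that makes $\mathbb{Z}$-completion fail to converge --- so steps (i)--(iii) as described cannot be carried out; moreover the skeletal-dependence claim in (i) does not obviously survive iteration of $\ust\circ\st$, since each application replaces higher homotopy by homology of a universal cover of the previous stage. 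Second, and more importantly, the convergence analysis you flag as ``the technical heart'' is unnecessary once you have the one structural input your proposal is missing: the functor $\ust\circ\st$ carries the universal-cover fibration $\tilde X \to X \to X_{\le 1}$ (the fiber sequence of $X$ over its $1$-type) to a fiber sequence, and hence does so for every term of the cosimplicial resolution (the paper's Lemma \ref{lem:strictication-preserves-covers}).

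With that lemma the proof is soft: homotopy limits commute with homotopy fiber sequences, so $\holim \st^\bullet(X)$ sits in a fiber sequence between $\holim \st^\bullet(\tilde X)$ and $\holim \st^\bullet(X_{\le 1})$. The fiber term is identified via Lemma \ref{lem:sc-strictification-abelianization} with the $\mathbb{Z}$-completion $\holim \tz^\bullet(\tilde X)$ of the simply connected space $\tilde X$, which is equivalent to $\tilde X$ by Bousfield--Kan; the base term is an essentially constant cosimplicial object because the unit is already a weak equivalence on $1$-types (Lemma \ref{lem:strictification-preserves-groupoids}); the five lemma then gives the connected case, and compatibility with coproducts gives the general case. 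So the fix is to replace your filtration-plus-spectral-sequence scheme with the single fibration $\tilde X \to X \to X_{\le 1}$ and to prove that $\ust\circ\st$ preserves it: without that preservation statement you cannot relate the resolution of $X$ to the resolutions of $\tilde X$ and $X_{\le 1}$ at all, and with it no convergence estimates are needed.
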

Bousfield and Kan proved a similar result for nilpotent spaces 
and simplicial abelian groups in \citep{Bousfield_Kan_1987},
which we rephrase as follows:
\begin{theorem}[\citep{Bousfield_Kan_1987}, III.5.4]\label{thm:BK-main-theorem}
	Let $X \in \sset$ be nilpotent. 
	Denote by $\tz^\bullet[X]$ the cosimplicial simplicial set 
	whose $n$th space is $(\tz[-])^n(X)$, with coface and codegeneracy maps 
	given by the coalgebra structure on $\tz[X]$. 
	Then there is a natural weak equivalence 
	$$
		X \to \holim \, (\tz^\bullet[X])
	$$
\end{theorem}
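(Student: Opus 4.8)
The plan is to recognize $\holim(\tz^\bullet[X])$ as (by definition) the Bousfield--Kan $\Z$-completion $\Z_\infty X$ and to prove that nilpotent spaces are \emph{$\Z$-complete}, meaning the natural completion map $\varphi_X \colon X \to \Z_\infty X$ is a weak equivalence. The foundation of the whole argument is that $\tz[-]$ detects only homology: since $\pi_*\tz[X] \cong \tilde H_*(X;\Z)$, a map $f$ induces a weak equivalence $\tz[f]$ exactly when $f$ is an integral homology isomorphism, and consequently $\Z_\infty$ carries homology isomorphisms to weak equivalences (the two cosimplicial resolutions become levelwise equivalent, so their homotopy limits agree). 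I would establish this homology-invariance first, as it is the lever for everything that follows.

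Next I would pin down the building blocks by showing that Eilenberg--MacLane spaces are $\Z$-complete. The anchoring computations are $K(\Z,n)$ and $K(\Z/p^k,n)$: here one can read off $\pi_*\Z_\infty K(A,n)$ directly, either via the homotopy spectral sequence of the cosimplicial space $\tz^\bullet$ (which collapses in these cases) or by an explicit identification of the completion, and check that $\varphi$ is an equivalence. General finitely generated $A$ then follows by writing $K(A,n)$ as a finite product, and arbitrary abelian $A$ by expressing it as a filtered colimit and controlling the resulting passage to the limit.

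I would then prove a \emph{fibre lemma}: for a principal fibration $F \to E \to B$ (more generally, a fibration on which $\pi_1 B$ acts nilpotently on $F$), applying $\Z_\infty$ yields, up to homotopy, a fibration sequence $\Z_\infty F \to \Z_\infty E \to \Z_\infty B$. Granting this, if $F$ and $B$ are $\Z$-complete, then comparing the long exact homotopy sequences of the original fibration and of the completed one via the five lemma forces $E$ to be $\Z$-complete as well. With the building blocks and the fibre lemma in hand, the main argument is an induction up a principal refinement of the Postnikov tower of $X$. Nilpotence guarantees such a refinement exists, with each successive fibre an Eilenberg--MacLane space $K(A_s,n_s)$ and each stage a principal fibration; the two lemmas make every finite stage $X\langle s\rangle$ $\Z$-complete, and since $X \simeq \holim_s X\langle s\rangle$ with the tower eventually constant in each homotopy degree, one concludes that $X$ is complete.

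The hard part will be the two places where one must interchange $\Z_\infty$ with a homotopy limit: the fibre lemma itself (preservation of fibration sequences under completion) and the final passage from the finite Postnikov stages to the total space, where the $\lim^1$ terms over the tower must be shown to vanish or be controlled. This is the technical heart of the Bousfield--Kan theory and precisely the point at which nilpotence, rather than mere connectedness, is indispensable---for a general space the completion map can fail badly to be an equivalence, so any correct proof must exploit nilpotence exactly here. The homotopy spectral sequence of the cosimplicial space is the natural bookkeeping device for securing the required convergence.
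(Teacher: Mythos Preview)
The paper does not give its own proof of this statement: it is quoted verbatim as a result of Bousfield and Kan (\cite{Bousfield_Kan_1987}, III.5.4) and used as a black box in the proof of Theorem~\ref{thm:main-theorem-basic}. So there is nothing in the paper to compare your proposal against.

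That said, your outline is an accurate reconstruction of the Bousfield--Kan argument itself: one identifies $\holim(\tz^\bullet[X])$ with the $\Z$-completion $\Z_\infty X$, establishes that $\Z_\infty$ inverts integral homology equivalences, checks $\Z$-completeness for Eilenberg--MacLane spaces, proves a fibre lemma for principal (or nilpotent) fibrations, and then climbs a principal refinement of the Postnikov tower, controlling the $\lim^1$ terms at the end. You have correctly located the two genuinely delicate steps---the fibre lemma and the convergence over the tower---as the places where nilpotence is essential. If you wanted to flesh this out into a self-contained proof you would need to supply those two arguments in detail; the rest is routine.
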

Recall that $\mathbb{Z}[-]$ is the functor which 
takes a simplicial set and applies the free abelian group functor 
at each level to produce a simplicial abelian group, 
and $\tilde{\mathbb{Z}}[-]$ is the functor which takes a \emph{pointed} 
simplicial set and applies $\mathbb{Z}[-]$, 
and then quotients out by the subgroup generate by the 
sub-simplicial set consisting of the basepoint (and all its degeneracies).
Our proof will essentially be an extension of 
the Bousfield-Kan result to the general case, 
using the identification between simply connected 
simplicial $T$-complexes and simply connected 
chain complexes that we established in section 3.

\begin{remark}
	While the main result and the majority of the work 
	in this paper is done unpointed, 
	we will need to use pointed spaces in order to relate 
	$\st(X)$ to $\tz[X]$ and thereby 
	leverage the Bousfield-Kan completion. 
	The adjunctions $\st \dashv \ust$ and 
	$\nonabdoldnormalizer \dashv \nonabdoldsset$ naturally extend to pointed versions.
\end{remark}

\begin{lemma}\label{lem:sc-strictification-abelianization}
	Let $X$ be a simply connected pointed simplicial set. 
	There is a natural weak equivalence 
	$$\ust(\st(X)) \to |\tz[X]|$$
\end{lemma}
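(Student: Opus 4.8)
The plan is to compute both sides of the desired equivalence through the various equivalences of categories and adjunctions established in Section 3, reducing the claim to a statement purely about chain complexes where the classical Dold--Kan theory applies. Since $X$ is simply connected, its strictification $\st(X)$ should be (after passing through $\nonabdoldnormalizer$) a $1$-reduced crossed complex, up to weak equivalence. The first step is to recall that $\st(X) = \stalg(F_A(X))$, and that $\ust(\st(X))$ has the same underlying simplicial set as $\utcom$ applied to some simplicial abelian group when $X$ is simply connected --- the key point being that a simply connected simplicial $T$-complex corresponds under $\nonabdoldnormalizer$ to a $1$-reduced crossed complex, which by item (3) of Theorem~\ref{thm:simply-connected-crossed-complex} is essentially a chain complex. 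So I would first replace $\st(X)$ by its image $C$ under the equivalence $\stcom \simeq \crcom$, argue $C$ is equivalent to a $1$-reduced crossed complex $C'$, and then identify $\tabcomplex(C') \in \ch$ as the relevant chain complex.

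The second step is to pin down \emph{which} chain complex this is: I claim it is the normalized chain complex $\doldnormalizer(\tz[X])$ of the reduced free simplicial abelian group on $X$. This should follow from the universal properties: $\st$ is the universal strictification, $\tz[-]$ is the (reduced) free simplicial abelian group functor, and the composite $\abcomplex \circ \nonabdoldnormalizer \circ \stalg \circ F_A$ should be naturally identified with $\doldnormalizer \circ \tz[-]$ on simply connected inputs, because abelianizing a strict $\infty$-groupoid presentation of the free $\infty$-groupoid on $X$ gives the free simplicial abelian group on $X$. More carefully, I would use the commutativity established in item (4) of Theorem~\ref{thm:simply-connected-crossed-complex}, namely $\utcom \circ \doldsset \cong \nonabdoldsset \circ \ucomplex$, together with item (2) (that $\utcom$ preserves underlying simplicial sets) to translate between the simplicial-set level and the chain-complex level. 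Running $\doldsset$ on $\doldnormalizer(\tz[X])$ recovers $\tz[X]$ up to the Dold--Kan equivalence, and $|{-}|$ on the pointed simplicial abelian group gives back the simplicial set $\tz[X]$ (here $|{-}|$ denotes underlying set, matching the notation of the $\ucomplex$ definition), so the underlying simplicial set of $\utcom(\doldsset(\doldnormalizer(\tz[X])))$ is exactly $\tz[X]$ up to isomorphism.

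The third step assembles these: $\ust(\st(X))$ has underlying simplicial set (up to weak equivalence, using Lemma~\ref{lem:weak-equiv-t-complex} to control the passage from $C$ to the $1$-reduced $C'$) equal to the underlying simplicial set of $\utcom(A)$ where $A \simeq \doldsset(\tabcomplex(C'))$, and by step two $\tabcomplex(C') \cong \doldnormalizer(\tz[X])$, so $A \cong \tz[X]$ as simplicial abelian groups, whence by item (2) the underlying simplicial set is $\tz[X] = |\tz[X]|$. Naturality throughout is inherited from the naturality of all the functors and natural isomorphisms involved. I expect the main obstacle to be step two: carefully verifying that the abelianization of the strictification of the free $\infty$-groupoid on $X$ really is the free simplicial abelian group --- this requires chasing the $1$-reduction of $\nonabdoldnormalizer(\st(X))$ and checking that the higher crossed-complex terms match the normalized chains of $\tz[X]$ degreewise, which is where the simple connectivity hypothesis does real work (it is what kills the fundamental group obstruction and makes $\tabcomplex$ lose no information, by item (3)). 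A secondary subtlety is the word ``natural'' and the word ``weak equivalence'' rather than ``isomorphism'': the passage from an arbitrary simply connected $C \in \crcom$ to a $1$-reduced model is only a weak equivalence, so I must ensure the comparison map $\ust(\st(X)) \to |\tz[X]|$ is constructed as an honest natural transformation (e.g.\ via the unit/counit of the relevant adjunctions) that happens to be a weak equivalence, not merely a zig-zag.
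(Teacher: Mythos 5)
Your plan shares the paper's two essential ingredients --- reduce to the $1$-reduced case so that item (3) of Theorem \ref{thm:simply-connected-crossed-complex} applies, and identify the abelianization of the strictification with $\tz[X]$ via items (2) and (4) (equivalently, by uniqueness of adjoints, $\tabcomplex \circ \nonabdoldsset \circ \st \cong \doldnormalizer \circ \tz[-]$, since the composite right adjoint is the forgetful functor $\sabgroup \to \sset$) --- but as written it has a gap exactly where the work happens. First, you assert that $C = \nonabdoldsset(\st(X))$ ``is equivalent to a $1$-reduced crossed complex $C'$'' without producing one; the paper manufactures it by choosing a weak equivalence $A \to X$ with $A$ a $1$-reduced \emph{pointed simplicial set} and taking $C' = \nonabdoldsset(\st(A))$, which is where simple connectivity is actually spent, at the simplicial-set level where $1$-reduced models are standard. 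Second, and more seriously, your steps 1--3 only match $\ust(\st(X))$ and $|\tz[X]|$ through a zig-zag passing through $C'$, whereas the lemma asserts (and its use in the proof of Theorem \ref{thm:main-theorem-basic} requires, to get the commuting triangle over $\holim(\tz^\bullet(\tilde{X}))$) that one specific natural map is a weak equivalence, namely the map induced by the unit $C \to \ucomplex(\tabcomplex(C))$. You flag this in your final paragraph but do not resolve it, and computing both sides separately does not yield it.

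The missing step is a naturality argument: the map $A \to X$ gives a commutative square whose horizontal arrows are the units $\nonabdoldsset(\st(A)) \to \ucomplex(\tabcomplex(\nonabdoldsset(\st(A))))$ and $\nonabdoldsset(\st(X)) \to \ucomplex(\tabcomplex(\nonabdoldsset(\st(X))))$, and whose vertical arrows are induced by $A \to X$. The top arrow is an isomorphism by item (3); the vertical arrows are weak equivalences because every functor involved is left Quillen or an equivalence, every object of $\sset$ is cofibrant and every object of $\crcom$ and $\ch$ is fibrant, so Ken Brown's lemma applies; two-out-of-three then makes the bottom unit a weak equivalence, and items (4) and (2) translate it into the natural map $\ust(\st(X)) \to |\tz[X]|$. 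This is the paper's entire proof, and it simultaneously repairs both of your gaps. One further small slip in your step 3: the isomorphism $\tabcomplex(\nonabdoldsset(\st(Y))) \cong \doldnormalizer(\tz[Y])$ is natural in $Y$ and holds on the nose at $X$, but at your replacement $C'$ it gives $\doldnormalizer(\tz[A])$, which is only weakly equivalent to $\doldnormalizer(\tz[X])$; so the claim ``$A \cong \tz[X]$ as simplicial abelian groups'' overstates what your zig-zag provides, though it does not affect the statement up to weak equivalence.
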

\begin{proof}
	Let $A \to X$ be a weak equivalence, 
	with $A$ a $1$-reduced pointed simplicial set. 
	We then have the commutative square
	\begin{center}
	\begin{tikzcd}
		\nonabdoldsset(\st(A)) 
			\ar[d] 
			\ar[r] 
		& \ucomplex(\tabcomplex(\nonabdoldsset(\st(A)))) 
			\ar[d] \\
		\nonabdoldsset(\st(X)) 
			\ar[r] 
		& \ucomplex(\tabcomplex(\nonabdoldsset(\st(X)))) 
	\end{tikzcd}
	\end{center}	
Where $\tabcomplex$ being the reduced version of $\abcomplex$. 
The downwards arrows are weak equivalences by basic model category theory, 
using that all objects of $\ch$ and $\crcom$ are fibrant 
and all objects of $\sset$ are cofibrant. 
The top arrow is an isomorphism by Theorem \ref{thm:simply-connected-crossed-complex}. 
Therefore, the bottom horizontal arrow is a weak equivalence. 
Finally, the equivalence between the simplicial and the chain complex models 
(item (4) of Theorem \ref{thm:simply-connected-crossed-complex}) completes the argument.
\end{proof}

\begin{lemma}\label{lem:homology-strictification-isomorphism}
	Let $X \in \sset$ be connected. 
	For $n \ge 2$ and for any choice of basepoint there is a natural isomorphism of functors
	$$
		{H}_n(\widehat{X}) \cong \pi_n(\ust(\st(X)))	
	$$
	Where $\widehat{X}$ is the universal cover of $X$.
	For $n=0,1$ and $x \in X$ there is a natural isomorphism
	$$
		\pi_n(X) \cong \pi_n(\ust(\st(X)))	
	$$
\end{lemma}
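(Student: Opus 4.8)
The plan is to treat $n\le 1$ and $n\ge 2$ separately: for $n\le 1$ I extract the answer from the fundamental-groupoid part of the crossed complex attached to $\st(X)$, and for $n\ge 2$ I pass to the universal cover and invoke the already-established simply connected case, Lemma~\ref{lem:sc-strictification-abelianization}.

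For $n=0$: the quotients defining $\st$ only identify \emph{thin} simplices, and a thin $0$-simplex must be degenerate, so there are none; hence $\st(X)_0=F_A(X)_0$ and the $1$-simplices of $X$ survive into $\st(X)$, so $\ust(\st(X))$ is connected and $\pi_0(\ust\st X)=\ast=\pi_0(X)$. For $n=1$: choose the crossed complex $C$ with $\nonabdoldnormalizer(C)\cong\st(X)$ (Theorem~\ref{thm:simplicial-crossed-equivalence}); by Lemma~\ref{lem:weak-equiv-t-complex}, $\pi_1(\ust\st X,x)\cong H_1(C,c)=\pi_1(C_1,c)/\delta(C_2(c))$, the vertex group of the fundamental groupoid $\Pi_1(C):=C_1/\delta(C_2)$. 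Now the composite $\sset\xrightarrow{\st}\stcom\xrightarrow{\nonabdoldsset}\crcom\xrightarrow{\Pi_1}\mathrm{Gpd}$ is a left adjoint, and its right adjoint $\mathrm{Gpd}\hookrightarrow\crcom\xrightarrow{\nonabdoldnormalizer}\stcom\xrightarrow{\ust}\sset$ is the usual nerve functor (a groupoid, viewed as a crossed complex concentrated in degrees $\le 1$, has $\nonabdoldnormalizer$ of it equal to its nerve); since $\Pi_1\colon\sset\to\mathrm{Gpd}$ is the left adjoint of the nerve, this composite is $\Pi_1$, whence $\Pi_1(C)\cong\Pi_1(X)$ and $\pi_1(\ust\st X,x)\cong\pi_1(X,x)$. (This left-adjoint identification could also be checked by hand on low-dimensional simplices.)

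For $n\ge 2$, fix a basepoint, let $G=\pi_1(X,x)$, and let $p\colon\tilde X\to X$ be the universal cover, functorial on pointed connected simplicial sets. Since $p$ is a principal $G$-covering with $G$ acting freely, $X$ is naturally weakly equivalent to the homotopy quotient $\tilde X/\!/G=\tilde X\times_G EG$. As $\st=\stalg\circ F_A$ is a composite of left Quillen functors it preserves this homotopy colimit, so $\st(X)\simeq\st(\tilde X)/\!/G$; applying $\ust$, which preserves all weak equivalences (every simplicial $T$-complex is fibrant and $\ust$ creates weak equivalences), gives $\ust\st X\simeq\ust(\st(\tilde X)/\!/G)$. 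By Lemma~\ref{lem:sc-strictification-abelianization} applied to the simply connected space $\tilde X$, $\ust\st(\tilde X)\simeq|\tz[\tilde X]|$ is simply connected, so the Borel fibration $\st(\tilde X)\to\st(\tilde X)/\!/G\to BG$ exhibits $\st(\tilde X)$ as the universal cover of $\st(\tilde X)/\!/G$. Since $\ust$ is the underlying-simplicial-set functor, it computes universal covers, so $\widetilde{\ust\st X}\simeq\ust\st(\tilde X)\simeq|\tz[\tilde X]|$. Finally, for $n\ge 2$, $\pi_n(\ust\st X)\cong\pi_n(\widetilde{\ust\st X})\cong\pi_n(\tz[\tilde X])$, and by the Dold--Kan correspondence $\pi_n(\tz[\tilde X])=H_n(\doldnormalizer(\tz[\tilde X]))\cong\tilde H_n(\tilde X)$, the reduced integral homology of the universal cover. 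All identifications are natural in $X$.

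The step needing the most care is the identification $\widetilde{\ust\st X}\simeq\ust\st(\tilde X)$: one must check that $\st$ genuinely preserves the homotopy-quotient presentation $X\simeq\tilde X/\!/G$ (not merely the strict quotient $\tilde X/G$), and, dually, that the universal cover of a simplicial $T$-complex is again a simplicial $T$-complex formed on underlying simplicial sets, so that $\ust$ and $\widetilde{(\,\cdot\,)}$ commute. Given those two points, the remainder is a routine assembly of the Dold--Kan correspondence, Lemma~\ref{lem:weak-equiv-t-complex}, Lemma~\ref{lem:sc-strictification-abelianization}, and the fibration sequence of a Borel construction.
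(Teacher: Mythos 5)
Your treatment of $n=0$ and $n=1$ is correct, and the $n=1$ argument is arguably tidier than the paper's: you identify $\Pi_1\circ\nonabdoldsset\circ\st$ as a left adjoint of the nerve and conclude by uniqueness of adjoints, where the paper just unwinds $H_1$ of the associated crossed complex. For $n\ge 2$ you take a genuinely different route: the paper cites a classical theorem of Whitehead that $H_n(\nonabdoldsset(\st(X)))\cong H_n(\tilde X)$ for reduced $X$ and translates via Lemma~\ref{lem:weak-equiv-t-complex}, whereas you try to deduce the statement from the simply connected case (Lemma~\ref{lem:sc-strictification-abelianization}) by writing $X\simeq\tilde X/\!/G$ and pushing the Borel construction through $\st$ and $\ust$.

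That is where the gap is. The assertion that ``the Borel fibration $\st(\tilde X)\to\st(\tilde X)/\!/G\to BG$ exhibits $\st(\tilde X)$ as the universal cover,'' together with the claim that $\ust$ ``computes universal covers,'' is not formal, and you supply no argument for it. In a general model category $\on{hocolim}_{BG}Y$ does not sit in a fiber sequence over $\on{hocolim}_{BG}\ast$ with fiber $Y$; this is a descent property that $\stcom\simeq\crcom$ has no a priori reason to satisfy (in the closely analogous setting of $\ch$ with a $G$-action, $\on{hocolim}_{BG}A\simeq A\otimes^{\mathbb{L}}_{\Z G}\Z$, whose fiber over the point is certainly not $A$). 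Moreover $\ust$ is a right adjoint, so it does not commute with the homotopy colimit defining $\st(\tilde X)/\!/G$; identifying $\ust(\st(\tilde X)/\!/G)$ as a space over an honest $K(G,1)$ with fiber $\ust\st(\tilde X)$ is precisely the covering-morphism theory of crossed complexes (quotients by free actions are covering morphisms, and the classifying-space functor preserves them), which is essentially the content of the Whitehead theorem the paper invokes. So at the decisive step your argument assumes a statement equivalent to the one being proved; flagging it as ``the step needing the most care'' does not discharge it. There is also a small circularity nearby: identifying the base of your fibration with $BG$ amounts to $\ust\st(BG)\simeq BG$, which in the paper is Lemma~\ref{lem:strictification-preserves-groupoids} and is deduced \emph{from} the present lemma. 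If you want to keep this strategy, you must either prove the covering statement directly (via the Brown--Higgins theory of covering crossed complexes) or, as the paper does, cite Whitehead.
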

\begin{proof}
	The $n=0$ case follows readily from the construction, 
	as adding horn fillers and quotienting co-thin simplices do not change $\pi_0$. 
	The $n=1$ case follows from \ref{lem:weak-equiv-t-complex}, 
	and that $H_1(\nonabdoldsset(\st(X)), x)$ is by definition 
	$\pi_1(X_1,x)$ modulo the boundaries of $2$-simplices. 
	For the $n \ge 2$ case, in (\citep{Brown_Higgins_Sivera_2011}, Section 8.4) 
	it is proven that $H_n(\widehat{X}) \cong H_n(\nonabdoldsset(\st(X)))$. 
	The identification then follows from Lemma \ref{lem:weak-equiv-t-complex}.
\end{proof}
\begin{lemma}\label{lem:strictification-1-connected}
	The map $X \to \ust(\st(X))$ is $1$-connected.
\end{lemma}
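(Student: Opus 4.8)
The plan is to deduce this immediately from the previous lemma, after pinning down the (standard) meaning of ``$1$-connected map.'' Recall that a map of simplicial sets $f \colon X \to Y$ is $1$-connected when, for every choice of basepoint, it induces a bijection on $\pi_0$ and a surjection on $\pi_1$. Since Lemma \ref{lem:homology-strictification-isomorphism} provides, in the range $n = 0, 1$, a \emph{natural isomorphism} $\pi_n(X) \cong \pi_n(\ust(\st(X)))$, the content of the present lemma is already subsumed by it — indeed one even gets an isomorphism on $\pi_1$, not merely a surjection. So the body of the argument is one sentence: apply Lemma \ref{lem:homology-strictification-isomorphism} in degrees $0$ and $1$.

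The one point that genuinely needs to be checked — and the only place any care is required — is that the isomorphisms produced in Lemma \ref{lem:homology-strictification-isomorphism} are the maps \emph{induced by the unit} $\eta_X \colon X \to \ust(\st(X))$, rather than some abstract isomorphism of groups that happens to exist. For $\pi_0$ this is transparent from the construction of $\st$: passing from $F_A(X)$ to $\st(X)$ is a filtered colimit of coequalizers that adjoin formal horn fillers or identify co-thin simplices, and neither operation alters the set of connected components nor the induced map on $\pi_0$; moreover the weak equivalence $X \to U_A F_A(X)$ of \cite{Nikolaus2011} is natural, so the composite is the unit. For $\pi_1$ one traces the identification through the crossed-complex model: the isomorphism of Lemma \ref{lem:homology-strictification-isomorphism} in degree $1$ is obtained from Lemma \ref{lem:weak-equiv-t-complex} (which is natural, being induced by geometric realization) together with the definition of $\pi_1$ of a crossed complex as $\pi_1$ of the bottom groupoid modulo boundaries of $2$-cells, and unwinding this shows the resulting map $\pi_1(X, x) \to \pi_1(\ust(\st(X)), x)$ is precisely $\pi_1(\eta_X)$.

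I do not expect a real obstacle here; the ``hard part,'' if any, is purely bookkeeping — confirming the naturality of the chain of identifications running through $\nonabdoldsset$, $\ucomplex$, and $\tabcomplex$ that underlies Lemma \ref{lem:homology-strictification-isomorphism} in degree $1$. It is worth noting that for the intended application only the surjectivity of $\pi_1(\eta_X)$ (and bijectivity of $\pi_0$) is used, so even a weaker, more easily verified version of this compatibility would suffice; but since Lemma \ref{lem:homology-strictification-isomorphism} already gives full isomorphisms in these degrees, no extra work is needed beyond citing it.
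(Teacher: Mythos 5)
You have correctly located the crux: Lemma \ref{lem:homology-strictification-isomorphism} only supplies an abstract natural isomorphism $\pi_n(X) \cong \pi_n(\ust(\st(X)))$ in degrees $0$ and $1$, and the entire content of the present lemma is to show that the map actually induced by the unit $\eta_X$ realizes (or is at least as connected as) that isomorphism. But your proposal does not discharge this point in degree $1$: the sentence ``unwinding this shows the resulting map is precisely $\pi_1(\eta_X)$'' asserts exactly the thing to be proved. Naturality of the isomorphism in $X$ (or of Lemma \ref{lem:weak-equiv-t-complex}) only says the isomorphism commutes with maps of simplicial sets; it says nothing, by itself, about how $\pi_1(\eta_X)$ compares with it, and the paper's construction of the degree-$1$ identification (through $H_1$ of the associated crossed complex, Lemma \ref{lem:weak-equiv-t-complex}, and a Whitehead-type comparison) nowhere comes packaged with a unit-compatibility statement. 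So the ``bookkeeping'' you defer is the whole lemma, and it is not obviously routine: tracking $\pi_1$ through the iterated coequalizers defining $\st$ is precisely what the paper avoids doing directly.

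The paper instead gives a short but genuinely non-formal argument: it factors the unit as $X \to \ust(\st(X)) \to |\mathbb{Z}[X]|$, observes via Lemma \ref{lem:homology-strictification-isomorphism} that on $\pi_1$ the composite is the Hurewicz/abelianization map, and then, for $\alpha \in \pi_1(X)$ represented by a map $S^1 \to X$, uses naturality of this factorization together with the rigidity of $\mathbb{Z}$ (if a composite $\mathbb{Z} \to \mathbb{Z} \to \mathbb{Z}$ is an isomorphism, each factor is) to conclude that $\pi_1(\eta_{S^1})$ is an isomorphism and hence, transporting along naturality, that $\pi_1(\eta_X)$ is the identity under the natural isomorphism. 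Two further remarks: your $\pi_0$ argument is fine and matches what the paper implicitly uses; but your closing claim that only surjectivity on $\pi_1$ is needed downstream is doubtful, since Lemma \ref{lem:strictification-preserves-groupoids} requires the unit to be an isomorphism on $\pi_1$ (a surjection onto an abstractly isomorphic group need not be injective, e.g.\ for non-Hopfian fundamental groups), which is what the paper's proof actually establishes.
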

\begin{proof}
	Note that that we have a factorization $X \to \ust(\st(X)) \to |\mathbb{Z}[X]|$, 
	which when we apply $\pi_1$ gives us $\pi_1(X) \to \pi_1(X) \to \pi_1(X)^{\text{ab}}$ 
	by \ref{lem:homology-strictification-isomorphism}, 
	where the composite is the abelianization.
	Let $\alpha \in \pi_1(X)$. 
	Considering a map $S^1 \to X$ representing $\alpha$ 
	and applying functoriality of the sequence, 
	we obtain the commutative diagram
	\begin{center}
	\begin{tikzcd}
		\mathbb{Z} 
			\ar[r] 
			\ar[d] 
		& \mathbb{Z} 
			\ar[d] \\
		\pi_1(X) 
			\ar[r] 
		& \pi_1(\ust(\st(X))) 
	\end{tikzcd}
	\end{center}
	Where the top arrow is an isomorphism, 
	as if the composition $\mathbb{Z} \to \mathbb{Z} \to \mathbb{Z}$ 
	is an isomorphism, each map must be. 
	We then see that $\pi_1(X) \to \pi_1(\ust(\st(X))$ 
	must be the identity under the natural isomorphism 
	$\pi_1(X) \cong \pi_1(\ust(\st(X)))$.
\end{proof}
\begin{lemma}\label{lem:strictification-preserves-groupoids}
	If $G$ is a $1$-type (that is, if $\pi_i(G) \cong 0$ for $i > 1$), 
	then the unit $G \to \ust(\st(G))$ is a weak equivalence.
\end{lemma}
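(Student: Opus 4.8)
The plan is to verify the weak equivalence $G \to \ust(\st(G))$ by checking it is an isomorphism on all homotopy groups, using the homotopy-group identifications from Lemma \ref{lem:homology-strictification-isomorphism} together with the $1$-connectivity from Lemma \ref{lem:strictification-1-connected}. Since $G$ is a $1$-type, $\pi_i(G) = 0$ for $i \ge 2$, so it suffices to show: (a) the map is an isomorphism on $\pi_0$ and $\pi_1$; and (b) $\pi_i(\ust(\st(G))) = 0$ for $i \ge 2$.

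First I would dispose of (a). By Lemma \ref{lem:homology-strictification-isomorphism}, for $n = 0, 1$ the natural map induces an isomorphism $\pi_n(G) \cong \pi_n(\ust(\st(G)))$; and by Lemma \ref{lem:strictification-1-connected} this natural isomorphism is realized by the unit map itself (that lemma shows the unit is $1$-connected, i.e. an iso on $\pi_0, \pi_1$ and a surjection on $\pi_2$, with the $\pi_1$ statement pinned down to be the unit). So the unit is already a $\pi_0$- and $\pi_1$-isomorphism for any connected $G$; here we may reduce to the connected case componentwise since all the functors involved commute with coproducts and $\pi_0$ is detected componentwise.

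The substance is (b): showing the higher homotopy of $\ust(\st(G))$ vanishes. By Lemma \ref{lem:homology-strictification-isomorphism}, for $n \ge 2$ we have $\pi_n(\ust(\st(G))) \cong \tilde H_n(\tilde G)$, the reduced homology of the universal cover of $G$. Since $G$ is a $1$-type, its universal cover $\tilde G$ is weakly contractible ($\pi_i(\tilde G) = \pi_i(G) = 0$ for $i \ge 2$ and $\pi_1(\tilde G) = 0$), hence $\tilde H_n(\tilde G) = 0$ for all $n \ge 1$. Therefore $\pi_n(\ust(\st(G))) = 0$ for $n \ge 2$. Combined with (a), the unit $G \to \ust(\st(G))$ is an isomorphism on all homotopy groups for every basepoint and every component, hence a weak equivalence.

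The main obstacle is purely bookkeeping: making sure the abstract isomorphisms of Lemma \ref{lem:homology-strictification-isomorphism} are genuinely induced by the unit map $G \to \ust(\st(G))$ rather than merely abstractly existing, so that vanishing of the target's homotopy groups can be combined cleanly with the $\pi_0, \pi_1$-iso statement to conclude the unit is a weak equivalence. For $n \ge 2$ this is automatic since the source $\pi_n(G)$ is zero, so there is nothing to compare; the only place naturality/identification of the map matters is $\pi_1$, which is exactly what Lemma \ref{lem:strictification-1-connected} handles. One should also note in passing that $\ust(\st(G))$ is automatically connected when $G$ is (again since these functors preserve $\pi_0$), so the universal-cover argument applies without fuss.
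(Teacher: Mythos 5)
Your argument is correct and is essentially the paper's own proof: the paper simply cites Lemmas \ref{lem:strictification-1-connected} and \ref{lem:homology-strictification-isomorphism}, and your write-up spells out exactly the intended deduction (unit is an iso on $\pi_0,\pi_1$; for $n\ge 2$, $\pi_n(\ust(\st(G)))\cong \tilde H_n(\tilde G)=0$ since the universal cover of a $1$-type is weakly contractible). No gaps; your bookkeeping remarks about naturality and componentwise reduction are sensible but not a departure from the paper's route.
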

\begin{proof}
	This follows from Lemmas \ref{lem:strictification-1-connected} 
	and \ref{lem:homology-strictification-isomorphism}.
\end{proof}
\begin{definition}
	By a \emph{homotopy fibre sequence} we shall mean a sequence 
	of pointed simplicial sets $X \to Y \to Z$, such that 
	\begin{center}
	\begin{tikzcd}
			X 
				\ar[r] 
					\ar[d] 
			& * 
				\ar[d] \\
			Y 
				\ar[r] 
			& Z
	\end{tikzcd}
	\end{center}
	is a homotopy pullback square. 
\end{definition}
For our purposes, it will really be sufficient just to know 
that the property of being a homotopy fibre sequence 
is invariant under weak equivalence of diagrams, 
and that for a pointed, connected simplicial set $X$ the sequence
$$\widehat{X} \to X \to X_{\le 1}$$
where again $\widehat{X}$ is the universal cover, 
and $X_{\le 1}$ is the $2$-coskeleton of $X$, 
is a homotopy fibre sequence. 
\begin{lemma}\label{lem:strictication-preserves-covers}
	If $Y \to X \to G$ is a fibre sequence of pointed, 
	connected simplicial sets where $G$ is a $1$-type
	and $X \to G$ induces an isomorphism on $\pi_1$, then 
	$$
	\ust(\st(Y)) \to \ust(\st(X)) \to \ust(\st(G))
	$$
	is a fibre sequence.
\end{lemma}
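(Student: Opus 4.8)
The plan is to show that the canonical comparison map from $\ust(\st(X))$ to the homotopy fibre of $\ust(\st(Y)) \to \ust(\st(G))$ is a weak equivalence, by a homotopy-group computation that reduces everything to the simply connected situation where Lemmas~\ref{lem:sc-strictification-abelianization}, \ref{lem:homology-strictification-isomorphism}, \ref{lem:strictification-1-connected} and \ref{lem:strictification-preserves-groupoids} apply. First I would reduce to the case that $G$ — and hence, since $Y \to G$ is a $\pi_0$-isomorphism, $Y$ — is connected. The long exact sequence of $X \to Y \to G$, together with $\pi_2(G) = 0$ and the hypothesis that $Y \to G$ is an isomorphism on $\pi_0$ and $\pi_1$, then shows that $X$ is connected and simply connected and that the map $X \to Y$ lifts to a weak equivalence $X \xrightarrow{\sim} \widetilde Y$ onto the universal cover of $Y$.

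Applying $\ust \circ \st$ to the fibre sequence produces $\ust(\st(X)) \to \ust(\st(Y)) \to \ust(\st(G))$, whose composite is the constant map (since $\st$ of the constant map $X \to G$ is constant in the pointed setting, because $\st$ preserves the zero object), so there is a canonical map $c : \ust(\st(X)) \to F$ where $F := \text{hofib}(\ust(\st(Y)) \to \ust(\st(G)))$, and $c$ composed with $F \to \ust(\st(Y))$ is $\ust \circ \st$ applied to $X \to Y$. Next I would analyze $F$: by Lemma~\ref{lem:strictification-preserves-groupoids} the unit $G \to \ust(\st(G))$ is a weak equivalence, so $\ust(\st(G))$ is a $1$-type; by Lemma~\ref{lem:homology-strictification-isomorphism} and naturality the map $\ust(\st(Y)) \to \ust(\st(G))$ is $\pi_0(Y) \to \pi_0(G)$ on $\pi_0$ and $\pi_1(Y) \to \pi_1(G)$ on $\pi_1$, hence an isomorphism on both. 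The long exact sequence of $F \to \ust(\st(Y)) \to \ust(\st(G))$ then gives that $F$ is simply connected and that $F \to \ust(\st(Y))$ is an isomorphism on $\pi_n$ for $n \ge 2$.

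Since $\ust(\st(X))$ is also simply connected (Lemma~\ref{lem:homology-strictification-isomorphism} with $n=1$, or Lemma~\ref{lem:strictification-1-connected}), it only remains to show $c$ is an isomorphism on $\pi_n$ for $n \ge 2$, and by the factorization above this reduces to showing $\ust(\st(X)) \to \ust(\st(Y))$ is an isomorphism on $\pi_n$ for $n \ge 2$. By naturality of the isomorphism in Lemma~\ref{lem:homology-strictification-isomorphism}, that map is identified with $\tilde H_n(\widetilde X) \to \tilde H_n(\widetilde Y)$ induced by the lift $\widetilde X \to \widetilde Y$ of $X \to Y$; as $X$ is simply connected $\widetilde X = X$ and this lift is exactly the weak equivalence from the first step, so the map is an isomorphism. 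Thus $c$ is a weak equivalence on all homotopy groups, hence a weak equivalence, and the sequence is a fibre sequence.

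The hard part will be the bookkeeping around naturality: one must check that the isomorphisms of homotopy groups furnished by Lemma~\ref{lem:homology-strictification-isomorphism} and by the long exact sequences are genuinely realized by the canonical map $c$ (not merely abstract isomorphisms of groups), which is precisely where the naturality clause of Lemma~\ref{lem:homology-strictification-isomorphism} and the explicit description of $c$ via the vanishing composite are used; the reduction to the connected case and the universal-cover identifications are routine but should be set up carefully because of the $\pi_0$-level subtleties.
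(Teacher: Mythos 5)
Your proposal is correct and is essentially the fleshed-out version of the paper's own (one-line) proof: the paper simply cites Lemma \ref{lem:strictification-preserves-groupoids} and Lemma \ref{lem:homology-strictification-isomorphism}, and your argument is exactly the comparison-with-the-homotopy-fibre and long-exact-sequence bookkeeping that those citations implicitly rely on. The minor points you flag (reduction to the connected case, basepoint/null-homotopy care, naturality of the isomorphisms) are indeed the only things to set up carefully, and your treatment of them is sound.
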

\begin{proof}
	The given hypotheses tell us that $Y \to X \to G$ 
	is weak homotopy equivalent to the standard homotopy fibre sequence 
	$\widehat{X} \to X \to X_{\le 1}$. 
	As $\ust$ and $\st$ preserve weak equivalences, 
	it suffices to show therefore that 
	$$ \ust(\st(\widehat{X})) \to \ust(\st(X)) \to \ust(\st(X_{\le 1})) $$
	is a homotopy fibre sequence, 
	which we will do by showing it is equivalent to the standard homotopy fibre sequence
	$$ \widehat{\ust(\st(X))} \to \ust(\st(X)) \to \ust(\st(X))_{\le 1} $$
	By \ref{lem:strictification-1-connected} and \ref{lem:strictification-preserves-groupoids}, 
	the map $\ust(\st(X)) \to \ust(\st(X_{\le 1}))$ is $1$-connected 
	and $\ust(\st(X_{\le 1}))$ is a $1$-type. 
	Hence, it is weak equivalent to $\ust(\st(X))_{\le 1}$. 
	Further, we can lift our map 
	$\ust(\st(\widehat{X})) \to \ust(\st(X))$ to get the dashed arrow in
	\begin{center}
	\begin{tikzcd}
		& \widehat{\ust(\st(X))} \ar[d] \\
		\ust(\st(\widehat{X})) \ar[ru, dashed] \ar[r] & \ust(\st(X))
	\end{tikzcd}
	\end{center}
	Invoking Lemma \ref{lem:homology-strictification-isomorphism}, 
	we obtain that this map is an isomorphism on 
	$\pi_n$ for $n > 2$ and therefore is a weak equivalence.
	Putting this together, we have a weak equivalence of diagrams
	\begin{center}
	\begin{tikzcd}
		\ust(\st(\widehat{X})) 
			\ar[r] 
			\ar[d] 
		& \ust(\st(X)) 
			\ar[r] 
			\ar[d] 
		& \ust(\st(X_{\le 1})) 
			\ar[d] \\
		\widehat{\ust(\st(X))} 
			\ar[r] 
		& \ust(\st(X)) 
			\ar[r] 
		& \ust(\st(X))_{\le 1}
	\end{tikzcd}
	\end{center}
	In which the left vertical map and the right vertical map 
	are the two weak equivalences we just described, 
	and the middle vertical map is the identity. 
	We therefore have a weak equivalence of diagrams as desired.
\end{proof}
\begin{proof}[Proof of Theorem \ref{thm:main-theorem-basic}]
	First consider the case where $X$ is connected, 
	and pick a basepoint for $X$. 
	We have a fibre sequence $\widehat{X} \to X \to X_{\le 1}$, 
	where $X_{\le 1}$ is the $2$-coskeleton of $X$ and $\widehat{X}$ 
	is therefore (up to homotopy) a universal cover of $X$. 
		We consider the following diagram:
	\begin{center}
	\begin{tikzcd}
		\widehat{X} 
			\ar[d] 
			\ar[r] 
		& \holim(\st^\bullet(\widehat{X})) 
			\ar[d] 
			\ar[r] 
		& (\ust \circ \st)(\widehat{X}) 
			\ar[d] 
			\ar[r, shift left = 2] 
			\ar[r, shift right = 2] 
		& (\ust \circ \st)^2(\widehat{X}) 
			\ar[d]  
			\ar[r, shift left = 2] 
			\ar[r] 
			\ar[r, shift right = 2] 
		& \cdots \\
		X 
			\ar[d] 
			\ar[r] 
		& \holim(\st^\bullet(X)) 
			\ar[d] 
			\ar[r] 
		& (\ust \circ \st)(X) 
			\ar[d] 
			\ar[r, shift left = 2] 
			\ar[r, shift right = 2] 
		& (\ust \circ \st)^2(X) 
			\ar[d] 
			\ar[r, shift left = 2] 
			\ar[r] 
			\ar[r, shift right = 2] 
		& \cdots \\
		X_{\le 1} 
			\ar[r] 
		& \holim(\st^\bullet(X_{\le 1})) 
			\ar[r] 
		& (\ust \circ \st)(X_{\le 1}) 
			\ar[r, shift left = 2] 
			\ar[r, shift right = 2] 
		& (\ust \circ \st)^2(X_{\le 1}) 
			\ar[r, shift left = 2] 
			\ar[r] 
			\ar[r, shift right = 2]  
		& \cdots 
	\end{tikzcd}	
	\end{center}
	By Lemma \ref{lem:strictication-preserves-covers}, each of the sequences
	\begin{center}
	\begin{tikzcd}
	 	(\ust \circ \st)^n(\widehat{X}) 
	 		\ar[r] 
	 	& (\ust \circ \st)^n(X) 
	 		\ar[r] 
	 	& (\ust \circ \st)^n(X_{\le 1})  
	\end{tikzcd}
	\end{center}
	is a fibre sequence. Since homotopy limits commute, it follows that
	\begin{center}
	\begin{tikzcd}
		\holim(\st^\bullet(\widehat{X})) 
			\ar[r] 
		& \holim(\st^\bullet(X)) 
			\ar[r] 
		& \holim(\st^\bullet(X_{\le 1}) )
	\end{tikzcd}
	\end{center}
		is a fibre sequence. 
		By Lemma \ref{lem:sc-strictification-abelianization}, 
		we have a commutative diagram 
	\begin{center}
	\begin{tikzcd}
		\widehat{X} 
			\ar[rd] 
			\ar[r] 
		& \holim(\st^\bullet(\widehat{X})) 
			\ar[d] \\
	 & \holim(\tz^\bullet(\widehat{X})) 
	\end{tikzcd}
	\end{center}
	Where the downwards arrows is a weak equivalence. 
	By \ref{thm:BK-main-theorem}, 
	$\widehat{X} \to \holim(\tz^\bullet(\widehat{X}))$ is a weak equivalence, 
	so $\widehat{X} \to \holim((\ust \circ \st)^\bullet(\widehat{X}))$ is as well. 
	By Lemma \ref{lem:strictification-preserves-groupoids}, 
	we get a commutative diagram
	\begin{center}
	\begin{tikzcd}
		X_{\le 1} 
			\ar[r]  
			\ar[rd] 
		& \holim(\st^\bullet(X_{\le 1}))  
			\ar[d] \\
	 	& X_{\le 1}
	\end{tikzcd}
	\end{center}
	Where the downwards arrow is a weak equivalence. 
	Therefore, the map $X_{\le 1} \to \holim(\st^\bullet(X_{\le 1}))$ 
	is a weak equivalence. It follows by the five lemma that 
	$X \to \holim(\st^\bullet(X))$ is a weak equivalence.
	\\
	\indent Finally, for the general case 
	(where $X$ may not be connected): 
	Both the functors $\ust$ and $\st$ commute with coproducts, 
	and therefore the map $X \to \holim(\st^\bullet(X))$  
	is the coproduct of the map on each of the connected components, 
	from which the result follows. 
\end{proof}
\section{The adjunction $\st \dashv \ust$ induces a comonadic adjunction of $\infty$-categories}
In this section we will do a little more work 
to achieve our main goal: providing an equivalence 
of homotopy theories between spaces and coalgebras in $\stcom$ 
for the comonad $\st \circ \ust$. 
We will do this via an application of the Barr--Beck--Lurie theorem. 
The main theorem of this section is
\begin{theorem}\label{thm:main}
	Let $\lf: \qsset \rightleftarrows \qstcom : \rf$ 
	be the adjunction of quasicategories induced by the Quillen adjunction 
	$\st: \sset \rightleftarrows \stcom :\ust$. 
	Then $\lf \dashv \rf$ is comonadic: that is, 
	it induces an equivalence of quasicategories between the quasicategory 
	$\qsset$ and the quasicategory of coalgebras for the comonad $\lf \circ \rf$.
\end{theorem}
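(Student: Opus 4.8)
The plan is to apply the $\infty$-categorical Barr--Beck--Lurie theorem in its comonadic form to the adjunction $\lf \dashv \rf$. Concretely, the comparison functor $K : \qsset \to \mathrm{coAlg}(\lf \circ \rf)$ — the one sending $X$ to $\lf X$ equipped with its canonical $\lf\rf$-coalgebra structure, i.e. the $\infty$-categorical incarnation of the functor $\freecoalg$ from Section 4, and compatible with the forgetful functor down to $\qstcom$ in the sense that forgetting recovers $\lf$ — is an equivalence of quasicategories as soon as: (i) $\lf$ is conservative, and (ii) $\qsset$ admits totalizations of $\lf$-split cosimplicial objects and $\lf$ preserves them. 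Condition (ii) separates into the trivial fact that $\qsset$, being the $\infty$-category of spaces, has all limits, and the real content, that $\lf$ preserves the relevant totalizations.

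First I would establish conservativity. Since $\rf$ and $\lf$ are functors of quasicategories, they preserve equivalences, so if $\lf(f)$ is an equivalence in $\qstcom$, then applying $\rf$ and $\lf$ alternately shows that $(\rf\lf)^{\,n+1}(f)$ is an equivalence for every $n \ge 0$. The cosimplicial space $\st^\bullet(f)$ appearing in Theorem \ref{thm:main-theorem-basic} is levelwise $(\rf\lf)^{\,n+1}(f)$, hence a levelwise equivalence, so $\holim(\st^\bullet(f))$ is an equivalence. By the naturality of the map $X \to \holim(\st^\bullet(X))$ and Theorem \ref{thm:main-theorem-basic}, it follows that $f$ itself is an equivalence; thus $\lf$ is conservative.

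The main obstacle is condition (ii): as a left adjoint $\lf$ does not preserve limits in general, so this must be checked by hand. For an $\lf$-split cosimplicial object $X^\bullet$, the totalization of $\lf X^\bullet$ is an absolute limit — it is computed by the coaugmentation of any splitting and is preserved by every functor — so the task reduces to proving that the canonical comparison map $\lf\,\holim(X^\bullet) \to \holim(\lf X^\bullet)$ is an equivalence. I would prove this by the same dévissage used in the proof of Theorem \ref{thm:main-theorem-basic}: first reduce to connected $X^\bullet$, using that $\st$ and $\ust$ commute with coproducts; then, using the fibre sequence comparing a space to its universal cover and its fundamental groupoid together with Lemmas \ref{lem:strictification-preserves-groupoids} and \ref{lem:strictication-preserves-covers}, split off a $1$-type part — on which $\lf$ is an equivalence, so preservation is automatic — from a simply connected part. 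On the simply connected part, Theorem \ref{thm:simply-connected-crossed-complex} identifies $\lf$, through the chain-complex model, with the additive functor $\tz[-]$, so the required statement becomes the preservation of totalizations of $\tz[-]$-split cosimplicial objects, which is precisely the phenomenon underlying the Bousfield--Kan result recalled in Theorem \ref{thm:BK-main-theorem} and holds because $\tz[-]$ lands in an additive setting where split totalizations are well controlled. Reassembling via the five lemma yields (ii), and Barr--Beck--Lurie then gives the theorem.

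I expect the genuinely delicate point to be running this dévissage for an arbitrary $\lf$-split cosimplicial object rather than merely for the canonical resolutions $\st^\bullet(X)$. One way to organize this is to bypass the general form of (ii) and instead argue essential surjectivity of $K$ directly: Theorem \ref{thm:main-theorem-basic} already shows $K$ is fully faithful (the unit $X \to \holim(\st^\bullet(X))$ of the resulting adjunction is an equivalence), and since the forgetful functor from $\mathrm{coAlg}(\lf\circ\rf)$ is conservative, essential surjectivity of $K$ amounts to showing that for each coalgebra $Y$ the counit $\lf\,\holim\!\big(\rf(\lf\rf)^\bullet Y\big) \to Y$ is an equivalence — a statement about the single family of $\lf$-split cosimplicial objects $\rf(\lf\rf)^\bullet Y$, to which the Postnikov/cover argument applied to the space-level underlying data can be brought to bear.
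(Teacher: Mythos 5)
Your overall architecture (Barr--Beck--Lurie, conservativity plus a condition on cosimplicial totalizations, with Theorem \ref{thm:main-theorem-basic} as the essential input) matches the paper's, and your conservativity argument is correct and in fact slicker than the paper's: the paper instead invokes Lemma \ref{lem:homology-strictification-isomorphism} together with the classical fact that a map inducing isomorphisms on $\pi_0$, $\pi_1$, and the homology of universal covers is a weak equivalence, whereas you derive conservativity formally from Theorem \ref{thm:main-theorem-basic} by observing that an equivalence $\lf(f)$ propagates to a levelwise equivalence of the canonical cosimplicial resolutions and hence to their totalizations. Both are valid and non-circular.

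The gap is in the second half. The paper does not verify the split-cosimplicial condition of Barr--Beck--Lurie directly; it invokes a refinement (\cite{Holmberg-Peroux_2020}, Prop.\ 6.1.4): for an adjunction $L \dashv R$ of quasicategories with $L$ conservative, $L$ is comonadic if and only if the unit $X \to \lim_\Delta(RL^{\bullet+1}X)$ is an equivalence for every $X$. With that lemma in hand, Theorem \ref{thm:main-theorem-basic} finishes the proof in one line and neither of your proposed hard steps needs to be confronted. As written, both of your routes are left open. Route (a) requires running the universal-cover/Postnikov d\'evissage on an \emph{arbitrary} $\lf$-split cosimplicial object, and it is not clear that the levelwise universal covers and $1$-truncations of such an object assemble into cosimplicial diagrams to which Lemmas \ref{lem:strictification-preserves-groupoids} and \ref{lem:strictication-preserves-covers} and the five lemma apply: the splitting lives on $\lf X^\bullet$ and gives no control over $X^\bullet$ itself. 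Route (b) correctly reduces essential surjectivity of the comparison functor to the statement that $\lf\,\holim(\rf(\lf\rf)^\bullet Y) \to Y$ is an equivalence for every coalgebra $Y$, but that is precisely the nontrivial content of the Holmberg--P\'eroux lemma (its proof is not a rerun of the cover argument, since $Y$ is an arbitrary coalgebra rather than $\lf$ of a space), and you assert it rather than prove it. The missing ingredient is exactly the lemma ``conservative left adjoint $+$ unit of the canonical resolution is an equivalence $\Rightarrow$ comonadic''; once you supply it, by proof or citation, your argument closes up and coincides with the paper's.
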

\subsection{The induced functor of quasicategories}
In general, a Quillen adjunction of model categories induces 
an adjunction of quasicategories. Here, our categories are rather nice: 
they are both simplicially enriched. 
It is therefore reasonable to expect that we might be able to use this 
simplicial enrichment to define the $\infty$-categorical adjunction we seek. 
Unfortunately, the functor $\ust$ is not simplicial (see \citep{Tonks_2003}), 
which complicates using these tools. 
Therefore, we ignore entirely the underlying simplicial enrichment 
and use basic model categorical tools. 
\\
\indent In (\citep{Mazel-Gee_2015}, A.3), a general  technique 
for obtaining adjunctions of quasicategories from model categories 
with functorial (co)fibrant replacement is described. 
Fortunately, we are in the simplest possible case: 
$\sset$ has as its cofibrant replacement functor the identity functor, 
and $\stcom$ has as its fibrant replacement functor the identity functor. 
Hence, the argument there allows us to conclude the following: 
\begin{theorem}
	The adjunction $\st \dashv \ust$ induces an adjunction 
	of quasicategories $\lf: \qsset \rightleftarrows \qstcom : \rf$ 
	with unit transformation induced directly by the 
	unit transformation $\mathbf{1}_{\sset} \to \ust \circ \st$. 
\end{theorem}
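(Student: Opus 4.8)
The plan is to invoke the general machinery of \cite{Mazel-Gee_2015}, Appendix A.3, which promotes a Quillen adjunction between model categories equipped with functorial (co)fibrant replacement to an adjunction between the associated quasicategories (their localizations at the weak equivalences), together with an explicit description of the unit and counit in terms of the point-set data. We are forced to proceed this way rather than through the simplicial enrichments of $\sset$ and $\stcom$ because, as remarked, $\ust$ fails to be a simplicial functor (\cite{Tonks_2003}); the model-categorical route avoids this obstruction entirely.

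First I would record that both model structures in play admit the identity functor as the relevant functorial replacement. For $\sset$ with the Kan--Quillen model structure every object is cofibrant (the cofibrations are the monomorphisms), so $\mathbf{1}_{\sset}$ is a functorial cofibrant replacement. For $\stcom$ with the transferred model structure, the earlier theorem identifies the fibrations with the Kan fibrations of underlying simplicial sets; since every simplicial $T$-complex is a Kan complex --- axiom (2) provides a (unique, thin) filler for every horn --- every object of $\stcom$ is fibrant, so $\mathbf{1}_{\stcom}$ is a functorial fibrant replacement.

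With these identifications, the hypotheses of the cited construction hold in the simplest possible form, and the derived functors are computed without any replacement: $\mathbb{L}\st = \st$ and $\mathbb{R}\ust = \ust$ on the nose. Consequently the induced functors of quasicategories $\lf$ and $\rf$ are precisely those represented by $\st$ and $\ust$ under the localization maps $\sset \to \qsset$ and $\stcom \to \qstcom$, and they assemble into an adjunction $\lf \dashv \rf$. Moreover the unit of this adjunction, which in the general construction is the composite of the point-set unit $\eta_X \colon X \to \ust(\st(X))$ with the fibrant-replacement comparison map $\st(X) \to R(\st(X))$, collapses to $\eta_X$ itself, since that comparison map is an identity; the triangle identities for $\lf \dashv \rf$ then descend from those for $\st \dashv \ust$ up to the coherences supplied by \cite{Mazel-Gee_2015}.

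I do not expect an essential obstacle here: the only thing to verify is that we lie within the hypotheses of the cited theorem, and this reduces to the two elementary observations above about cofibrancy in $\sset$ and fibrancy in $\stcom$. The point deserving a moment's care --- and the reason the statement is worth isolating --- is the claim about the unit: it is exactly the triviality of the fibrant replacement on $\stcom$ (equivalently, that every simplicial $T$-complex is already a Kan complex) that makes the derived unit literally $\eta$ rather than a zig-zag, and it is this concrete form of the unit that is needed when the adjunction is fed into the Barr--Beck--Lurie theorem in the next subsection.
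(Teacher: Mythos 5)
Your proposal matches the paper's argument: both invoke the construction of Mazel-Gee (A.3) and reduce everything to the observation that all objects of $\sset$ are cofibrant and all objects of $\stcom$ are fibrant (every simplicial $T$-complex being a Kan complex via its thin fillers), so the derived functors and the derived unit are the point-set ones. The paper's appendix merely fleshes out the cited machinery (Rezk nerve, candidate unit map, Dwyer--Kan mapping-space comparison), which you defer to the citation, as does the paper's main text.
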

To see this argument more fleshed out, see the appendix.
\subsection{Comonadicity}
In order to prove our main theorem, 
we will apply the Barr-Beck-Lurie Theorem:
\begin{theorem}[\citep{Lurie_2009}, Theorem 4.7.2.2]
	A functor $F: \C \to \D$ exhibits $\C$ as 
	comonadic over $\D$ if and only if it admits a right
	adjoint, is conservative, and preserves all limits 
	of $F$-split coaugmented cosimplicial objects.
\end{theorem}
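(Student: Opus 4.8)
The plan is to obtain this comonadic criterion from the monadic Barr--Beck--Lurie theorem by passing to opposite quasicategories, and then to sketch the monadic argument in its dual (comonadic) form. The reduction is mechanical: $F \colon \C \to \D$ exhibits $\C$ as comonadic over $\D$ if and only if $F^{\mathrm{op}} \colon \C^{\mathrm{op}} \to \D^{\mathrm{op}}$ exhibits $\C^{\mathrm{op}}$ as monadic over $\D^{\mathrm{op}}$. Under $(-)^{\mathrm{op}}$ a right adjoint $G$ of $F$ becomes a left adjoint of $F^{\mathrm{op}}$, the comonad $FG$ on $\D$ becomes a monad on $\D^{\mathrm{op}}$, the quasicategory of $FG$-coalgebras becomes the quasicategory of algebras over that monad, limits become colimits, and an $F$-split coaugmented cosimplicial object becomes an $F^{\mathrm{op}}$-split augmented simplicial object; conservativity is self-dual. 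Thus each of the three hypotheses matches the corresponding monadic hypothesis, and it suffices to prove the monadic statement. I will phrase the remaining steps directly in comonadic language, importing the standard free/forgetful lemmas for (co)algebras in their dualized form.

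I would first set up the comparison functor and dispose of the necessity (``only if'') direction, which is formal. Writing $\bot = FG$ for the comonad on $\D$ with counit $\epsilon$ and comultiplication induced by the unit $\eta$ of $F \dashv G$, the cofree--forgetful adjunction $U \colon \mathrm{coAlg}_\bot(\D) \rightleftarrows \D \colon R$ has forgetful left adjoint $U$, and the comparison $K \colon \C \to \mathrm{coAlg}_\bot(\D)$, $c \mapsto (Fc, F\eta_c)$, satisfies $U \circ K \simeq F$. Comonadicity means $K$ is an equivalence. Now $U$ is conservative and, being the forgetful functor of a comonad, it preserves limits of $U$-split coaugmented cosimplicial objects (such limits are \emph{absolute}, so this is automatic). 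If $K$ is an equivalence then $F = U \circ K$ inherits exactly these properties, giving necessity.

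The content is sufficiency. The key structural fact is that a split coaugmented cosimplicial object is an absolute limit diagram, so the hypothesis that $F$ preserves limits of $F$-split coaugmented cosimplicial objects is precisely what lets us compute totalizations after applying $F$. I would construct a candidate inverse $L \colon \mathrm{coAlg}_\bot(\D) \to \C$ by sending a coalgebra $(d,\gamma)$ to the totalization $\Tot$ of the cosimplicial object $[n] \mapsto G\,\bot^{\,n} d$ built from its cobar resolution. Every coalgebra is the limit of its own cobar resolution, which becomes $U$-split after forgetting, and every object $c \in \C$ admits a dual resolution; applying $F$ and using the split-limit hypothesis identifies these. One then checks that the unit $\mathrm{id}_\C \to LK$ and the counit $KL \to \mathrm{id}$ are equivalences: by conservativity of $F$ (resp. $U$) it suffices to check after applying $F$ (resp. $U$), at which point the relevant resolution becomes split, hence an absolute limit that $F$ preserves on the nose, and the two sides are identified.

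The main obstacle --- and where the weight of the proof actually lies --- is the coherent, quasicategorical construction of these cobar resolutions and their splittings as genuine diagrams $\Delta \to \mathrm{coAlg}_\bot(\D)$ (and $\Delta \to \C$), rather than as mere objectwise data, together with the verification that after forgetting they are honestly split. Classically the splitting is produced by extra codegeneracies coming from the counit $\epsilon$; in the $\infty$-categorical setting this requires the theory of split (co)simplicial objects and the associated décalage comparison, which is exactly the technical core developed in \cite{Lurie_2009}. Once these resolutions are available as coherent diagrams, the conservativity and split-limit hypotheses combine formally to force $K$ to be an equivalence, completing the proof.
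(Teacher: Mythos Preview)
The paper does not prove this theorem; it merely cites it from \cite{Lurie_2009} as a black-box tool to be applied in the proof of Theorem~\ref{thm:main}. There is therefore no ``paper's own proof'' to compare your proposal against.

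That said, your sketch is a correct high-level outline of how one proves the Barr--Beck--Lurie theorem: the dualization to the monadic statement is the standard manoeuvre, the necessity direction is indeed formal, and your description of the sufficiency argument (cobar resolution, candidate inverse via totalization, checking unit/counit are equivalences after applying the conservative functor so as to land in a split situation) is accurate. You are also right to flag that the serious work lies in building the resolutions and their splittings coherently as diagrams in the quasicategorical sense; this is precisely what Lurie's machinery supplies. If your aim was to demonstrate understanding of the theorem's proof, you have done so at an appropriate level of detail for a sketch; if your aim was to supply a proof the paper omits, note that the paper deliberately treats this as an external input and does not attempt to reprove it.
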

Our aim is to prove that the functor $\lf: \qsset \to \qstcom$ 
exhibits $\qsset$ as comonadic over $\qstcom$. 
Of course, we already have a right adjoint. 
Conservativity of $\lf$ is, as it turns out, a classical fact:
\begin{lemma}\label{lem:strictification-conservative}
	$\lf : \qsset \to \qstcom$ is conservative.
\end{lemma}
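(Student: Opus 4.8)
The plan is to reduce conservativity of $\lf$ to the classical fact that the strictification (abelianization-like) functor detects weak equivalences, via the homotopy group identifications established in Section 4. Since $\qsset$ and $\qstcom$ are the quasicategories underlying the model categories $\sset$ and $\stcom$, a morphism in $\qsset$ is an equivalence iff the corresponding map of simplicial sets is a weak homotopy equivalence, and likewise in $\qstcom$ (where weak equivalences are detected on underlying simplicial sets). So the statement to prove is: if $f : X \to Y$ is a map of simplicial sets such that $\st(f) : \st(X) \to \st(Y)$ is a weak equivalence of simplicial $T$-complexes, then $f$ is a weak homotopy equivalence.

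First I would observe that it suffices to check this on each connected component, since $\st$ and $\ust$ commute with coproducts (as noted in the proof of Theorem \ref{thm:main-theorem-basic}); so assume $X$ and $Y$ connected. The key input is Lemma \ref{lem:homology-strictification-isomorphism}, which gives natural isomorphisms $\pi_n(X) \cong \pi_n(\ust(\st(X)))$ for $n = 0, 1$ and $\tilde H_n(\tilde X) \cong \pi_n(\ust(\st(X)))$ for $n \geq 2$, and similarly for $Y$. Applying $\ust$ to $\st(f)$ and using naturality, the hypothesis that $\st(f)$ is a weak equivalence forces $\pi_0(f)$ and $\pi_1(f)$ to be isomorphisms, and forces $\tilde H_n(\tilde f) : \tilde H_n(\tilde X) \to \tilde H_n(\tilde Y)$ to be an isomorphism for all $n \geq 2$, where $\tilde f$ is the induced map on universal covers.

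Next I would upgrade the homology statement to a homotopy statement. Since $\pi_1(f)$ is an isomorphism, $\tilde f : \tilde X \to \tilde Y$ is a map of simply connected simplicial sets inducing an isomorphism on all reduced integral homology groups; by the homology Whitehead theorem, $\tilde f$ is a weak homotopy equivalence, hence induces isomorphisms on $\pi_n$ for all $n \geq 2$. Combined with the long exact sequence of the fibre sequence $\tilde X \to X \to X_{\leq 1}$ (and the analogue for $Y$), together with $\pi_0(f), \pi_1(f)$ isomorphisms, the five lemma gives that $\pi_n(f)$ is an isomorphism for all $n$ and all basepoints, so $f$ is a weak homotopy equivalence. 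Translating back: $f$ becomes an equivalence in $\qsset$, which is exactly conservativity of $\lf$.

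The main obstacle I anticipate is purely bookkeeping rather than conceptual: making sure the identifications in Lemma \ref{lem:homology-strictification-isomorphism} are genuinely natural in the map $f$ (so that "$\st(f)$ a weak equivalence" really does imply the homology maps are isomorphisms), and handling basepoints and components cleanly. One should also be slightly careful that a map of simplicial $T$-complexes is a weak equivalence precisely when its underlying map of simplicial sets is (which is the content of the model structure established earlier), so that applying $\ust$ loses no information. None of this is deep, but it is the part where a sloppy argument could go wrong; everything else is assembled from results already in the excerpt.
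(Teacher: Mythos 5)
Your proposal is correct and follows essentially the same route as the paper: both reduce conservativity, via the natural identifications of Lemma \ref{lem:homology-strictification-isomorphism}, to the classical statement that a map inducing isomorphisms on $\pi_0$, $\pi_1$ (all basepoints) and on the homology of universal covers is a weak homotopy equivalence. The only difference is that the paper cites this last fact directly (tom Dieck, 20.1.8), whereas you re-derive it from the homology Whitehead theorem on universal covers together with the fibre sequence $\tilde X \to X \to X_{\le 1}$ and the five lemma, which is a harmless expansion of that citation.
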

\begin{proof}
	We want to prove that for a morphism $f : X \to Y$ in $\qsset$, 
	if $\lf(f)$ is a weak equivalence, then so is $f$. 
	If $\lf(f)$ is a weak equivalence, then it induces isomorphisms 
	on all the homotopy groups of the underlying simplicial sets of 
	$\lf(X)$ and $\lf(Y)$. Equivalently by Lemma 
	\ref{lem:homology-strictification-isomorphism}, it induces an isomorphism 
	on all homology groups of the associated crossed complexes. 
	By \ref{lem:homology-strictification-isomorphism}, this is equivalent to 
	saying that $f$ induces an isomorphisms on $\pi_0(X) \to \pi_0(Y)$, 
	$\pi_1(X) \to \pi_1(Y)$ for any choice of basepoint, and 
	$H_n(\widehat{X}) \to H_n(\widehat{Y})$ for $n \ge 2$. 
	By (\citep{Dieck_2010}, 20.1.8), $f$ is a weak homotopy equivalence. 
\end{proof}
It therefore remains to verify the last condition about 
$F$-split coaugmented cosimplicial objects.
The following lemma simplifies the work to be done:
\begin{lemma}[\citep{Holmberg-Peroux_2020}, Prop 6.1.4]
	Given a pair of adjoint functors $L: \C \rightleftarrows \D : R$ in quasicategories, 
	such that $L$ is conservative. 
	Then $L$ is comonadic if and only if the map 
	$X \to \lim^\C_\Delta (RL^{\bullet +1}X)$ 
	is an equivalence for all objects $X$ in $\C$.
\end{lemma}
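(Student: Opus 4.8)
The plan is to deduce the lemma from the (dual) Barr--Beck--Lurie theorem quoted above by analyzing the comparison functor. Write $T = LR$ for the induced comonad on $\D$, let $\text{coAlg}_T(\D)$ be its quasicategory of coalgebras with forgetful functor $U_T$, and let $\Phi : \C \to \text{coAlg}_T(\D)$ be the canonical comparison functor, so that $L = U_T \circ \Phi$ and, by definition, $L$ is comonadic precisely when $\Phi$ is an equivalence. I would first record two facts. (i) $\Phi$ is conservative: $U_T$ is conservative (a map of coalgebras is an equivalence iff its underlying map is), and $L = U_T\Phi$ is conservative by hypothesis, hence so is $\Phi$. (ii) $\Phi$ has a right adjoint $\Psi$ (using that $\C$ admits all totalizations), and $\Psi\Phi \simeq \lim_\Delta (RL)^{\bullet+1}(-)$, with the adjunction unit $\mathrm{id}_\C \Rightarrow \Psi\Phi$ equal to the canonical map $X \to \lim_\Delta (RL)^{\bullet+1}X$ in the statement. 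For (ii): $U_T$ has the cofree functor $C_T$ as a right adjoint, so $R \simeq \Psi \circ C_T$; every coalgebra $M$ is the totalization of its cofree cobar resolution $[n] \mapsto C_T(T^n U_T M)$ (the dual of the bar resolution of a module over a monad), and applying the limit-preserving functor $\Psi$, using $\Psi C_T \simeq R$ and $R(LR)^nL \simeq (RL)^{n+1}$, yields the formula, after tracking the coaugmentation to identify the unit. Granting conservativity of $\Phi$, the lemma is therefore equivalent to the statement that $\Phi$ is an equivalence if and only if its unit is a natural equivalence.

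The forward direction is easy. If $L$ is comonadic, then by dual Barr--Beck--Lurie it preserves the limits of $L$-split coaugmented cosimplicial objects. The coaugmented cosimplicial object $X \to \bigl((RL)^{\bullet+1}X\bigr)$ is $L$-split: applying $L$ produces the cobar resolution $LX \to \bigl((LR)^{\bullet+1}LX\bigr)$, which is split via the comonad counit, hence an absolute limit diagram with limit $LX$. Thus $L$ preserves its limit, so $L(u_X)$ is, up to the resulting equivalence, the identity of $LX$; since $L$ is conservative, $u_X$ is an equivalence for every $X$.

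The converse is where I expect the real work to lie. Assuming $u_X$ is an equivalence for all $X$, the unit $\eta$ is a natural equivalence, so $\Phi$ is fully faithful with $\Psi\Phi \simeq \mathrm{id}_\C$; it remains to prove $\Phi$ is essentially surjective, equivalently that its counit $\epsilon$ is a natural equivalence. The natural route is to show every coalgebra lies in the essential image of $\Phi$, using that each cofree coalgebra $C_T(D)$ is $\Phi(RD)$ and that every coalgebra $M$ is the totalization of its cofree cobar resolution; the sticking point is that one needs the essential image of $\Phi$ to be closed under exactly these totalizations. This cannot be obtained by formal adjunction-chasing alone: the cobar resolution is only $U_T$-split, not split, in $\text{coAlg}_T(\D)$, so while its totalization exists there and is created by $U_T$ (split diagrams being absolute), identifying $\Phi$ of the corresponding totalization in $\C$ with it is equivalent, via conservativity of $U_T$, to the very preservation statement supplied by dual Barr--Beck--Lurie. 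I would therefore carry the converse out by re-deriving that preservation condition from the pointwise hypothesis on $\eta$, the main obstacle being to juggle the totalizations in $\C$ and in $\text{coAlg}_T(\D)$ with care rather than to invoke them abstractly.

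Finally, applying the lemma with $L = \lf$ and $R = \rf$: Theorem \ref{thm:main-theorem-basic} provides the equivalence $X \to \holim \st^\bullet(X)$, which is precisely the map $X \to \lim_\Delta (RL)^{\bullet+1}X$, and Lemma \ref{lem:strictification-conservative} provides conservativity of $\lf$, so the lemma yields Theorem \ref{thm:main} at once. I expect essentially all of the difficulty to be concentrated in the converse direction, and within it in the bookkeeping around totalizations in the coalgebra $\infty$-category; the identification of $\Psi\Phi$ with the cobar totalization and the forward direction are routine.
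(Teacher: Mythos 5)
The paper does not actually prove this lemma---it is quoted from the cited reference---so there is no internal proof to compare against; your attempt has to stand on its own. Your setup (the comparison functor $\Phi$, its conservativity, the right adjoint $\Psi$ with $\Psi\Phi \simeq \lim_\Delta (RL)^{\bullet+1}$, and the identification of the unit) is the standard and correct framework, and your forward direction is complete: $(RL)^{\bullet+1}X$ is $L$-split because $L$ carries it to the cobar resolution of the coalgebra $\Phi X$, which is split by the counit, so comonadicity forces $L(u_X)$ and hence $u_X$ to be an equivalence.

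The genuine gap is the converse, which is the only direction the paper uses, and which you explicitly defer (``I would therefore carry the converse out by re-deriving that preservation condition from the pointwise hypothesis''). That re-derivation is not bookkeeping; it is the entire content, and the obvious routes fail. Given the existence of $\Psi$, the pointwise hypothesis is equivalent to full faithfulness of $\Phi$, i.e.\ to $\C$ embedding as a coreflective subcategory of $\on{coAlg}_T(\D)$ containing the cofree coalgebras; essential surjectivity requires in addition that $L$ preserve $\lim_\Delta R T^\bullet U_T M$ for an \emph{arbitrary} coalgebra $M$, whereas the hypothesis supplies this only for the cosimplicial objects $(RL)^{\bullet+1}X$, which are exactly the case $M = \Phi X$. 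The natural attempt to bridge this---apply the hypothesis levelwise to $A^\bullet = R T^\bullet U_T M$, form the bicosimplicial object $(RL)^{p+1}A^q \simeq R T^{p+q+1} U_T M$, and exchange the two totalizations using splitness in the $q$-direction---is circular: both iterated limits evaluate to $\Psi M$ again, and one never identifies $L\Psi M$ with $U_T M$. Likewise the triangle identity only gives that $\Psi$ inverts the counit, which is useless without conservativity of $\Psi$, itself equivalent to what you are trying to prove. So you should either import the precise hypotheses and argument of the cited Prop.\ 6.1.4 (which carries assumptions on $\C$ beyond what is restated here) or supply a genuine proof that the essential image of $\Phi$ is closed under totalizations of cofree resolutions; as written, your proposal establishes full faithfulness of the comparison functor but not comonadicity.
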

\begin{proof}[Proof of \ref{thm:main}]
	By Lemma \ref{lem:strictification-conservative} and the above lemma, 
	we only need to prove that for any $X \in \qsset$, 
	the map $X \to \lim^\C_\Delta (\rf\lf^{\bullet +1}X)$ is an equivalence. 
	However, by our construction of the quasicategorical adjunction, 
	the cosimplicial object $\rf\lf^{\bullet +1}X$ is the same as 
	the diagram $\st^\bullet X$ as in Theorem \ref{thm:main-theorem-basic}. 
	Since homotopy limits compute limits in the associated quasicategory, 
	this then follows directly by Theorem \ref{thm:main-theorem-basic}.
\end{proof}
\begin{remark}
	We have phrased our main theorem as being about 
	simplicial sets and simplicial $T$-complexes, 
	but of course from the equivalence of categories 
	$\stcom \rightleftarrows \stinfty$ the analagous 
	statement for $\omega$-groupoids immediately follows.
\end{remark}
\section{Future Work}
\begin{itemize}
	\item The most immediate line of future inquiry 
		is to extend this work to categories, not just groupoids: 
		that is, construct adjunctions between categories of 
		strict and weak $(\infty,n)$ categories, and determine if 
		these are comonadic. Perhaps the most full extension along 
		these lines would be to construct for any $m \ge n$ a 
		strictification functor from weak $(m,n)$-categories to 
		strict $(m,n)$-categories, and of course show that the 
		resulting diagram of homotopy theories indexed by 
		$\mathbb{N}^2 \times [1]$ commutes. 
	\item One consequence of this work is a globular model for homotopy types: 
		the objects are strict $\infty$-groupoids equipped with a coalgebra map 
		for the comonad $\ust \nonabdoldsset \circ \nonabdoldnormalizer \circ \st$. 
		However,this description strongly relies on simplicial sets as an 
		intermediary to describe the comonad. Thus, one goal is to construct a 
		comonad of $\stinfty$ which is equivalent to the comonad induced by 
		the comonad on $\stcom$, but which is more ``inherently globular.'' 
	\item Although this work proves that $\omega$-groupoids 
		can be used to model weak $\infty$-groupoids, 
		it does not provide a convenient model category of coalgebras.
		A strengthening of this result would be to construct a model 
		structure on the category of coalgebras for the comonad, 
		along with a Quillen equivalence to the model category of simplicial sets.
\end{itemize}
\appendix 
\section{The passage to quasicategories}
In this section we flesh out in greater detail the passage 
from the Quillen adjunction $\st : \sset \rightleftarrows \stcom : \ust$ 
to the adjunction of quasicategories $\lf: \qsset \rightleftarrows \qstcom : \rf$, 
in order to fully justify our results. 
There is nothing original in this section: 
we simply flesh out some of the argument in \citep{Mazel-Gee_2015} 
as it applies to our particular case of interest, 
in order to make it more accessible to the non-expert reader. 
\\\\
\indent In order to obtain a quasicategory from a relative category 
(in particular, a model category), we shall pass through 
complete Segal spaces, which are the fibrant objects 
in a certain model structure on bisimplicial sets. 
We will not exposit much of the theory of complete Segal spaces, 
but direct the interested reader to \citep{Rezk_1998}. 
We begin with our model category, viewed as a relative category $(M,\mathcal{W})$. 
We then apply the following two functors:
\begin{center}
\begin{tikzcd}
	\text{RelCat} 
		\ar[r, "N_r"] 
	& \text{ssSet} 
		\ar[r, " i^*_1"] 
	& \text{sSet}
\end{tikzcd}
\end{center}
The first functor $N_r$ is called the ``Rezk nerve'' 
(originally called the ``classifying diagram'' in \citep{Rezk_1998}). 
It is given as follows:
\begin{definition}
	Given a relative category $(M, \mathcal{W})$, 
	the Rezk nerve $N_r(M)$ is given by 
	$(n,m) \mapsto \text{Fun}([n], \text{Core}(\text{Fun}([m], M)))$.
\end{definition}
Here ``Core'' denotes the subcategory $\mathcal{W} \subset M$. 
Importantly, $N_r$ preserves products, and so a natural transformation 
$M \times I \to M$ is sent to $N_r(M) \times N_r(I) \to N_r(M)$. 
Here $I$ is the standard interval category, 
equipped with the relative category structure where only the isomorphisms 
(the two identity arrows) are weak eqivalences. 
We wish to obtain a quasicategory from this. 
First, we fibrantly replace $N_r(M)$ in the Reedy model structure 
to get a bisimplicial set denoted $N_R^f(M)$, which is a complete Segal space.
We note that $N_R(I)$ is already a complete Segal space, 
as $N_r(C)$ is a complete Segal space whenever $C$ 
is equipped with the minimal relative category structure. 
Applying this all to our starting situation of the unit 
natural transformation of the adjunction $\st \dashv \ust$, 
we have a map
$$N_r^f(\stcom) \times N_r(I) \to N_r(\stcom)$$
whose component at any object of $N_r^f(\stcom)$ 
is precisely the unit map $X \to \ust(\st(X))$. 
We now apply the functor $i^*_1$ of \citep{Joyal_2006} 
to obtain a quasicategory. Conveniently, this functor merely 
restricts a bisimplicial set $X_{\star \star}$ to its 
first row $X_{\star 0}$, and so we have now a diagram
$$i^*_1\left( N_r^f(\sset)\right) \times N(I) \to i^*_1\left( N_r^f(\sset)\right)$$
Where $N(I)$ is the ordinary nerve of the category $I$. 
We take $i^*_1\left( N_r^f(\sset)\right)$ as our model 
for the quasicategory of simplicial sets $\qsset$ 
(and similarly $i^*_1\left( N_r^f(\stcom)\right)$ 
models the quasicategory of simplicial $T$-complexes $\qstcom$). 
This map is a candidate unit map for the quasicategorical adjunction 
$\lf : \qsset \rightleftarrows \qstcom : \rf$ in the sense 
of (\citep{Lurie_2009}, Definition 5.2.2.7); 
to show that it truly provides a quasicategorical adjunction 
it must be shown that it induces weak equivalences
$$
\underline{Map}_{\qstcom}(\lf(x),y) 
	\to \underline{Map}_{\qsset}(\rf (\lf (x)), \rf(y)) 
	\to \underline{Map}_{\qsset}(x, \rf(y))
$$
For any $x$ and $y$ objects in $\qsset$, 
where $\underline{Map}_\mathcal{C}$ is the mapping space 
between objects in a quasicategory. 
This follows from the fact that these mapping spaces are 
functorially equivalent to the mapping spaces given by hammock localization, 
which in turn are equivalent by \citep{Dwyer_Kan_1980} 
to mapping spaces calculated via cosimplicial resolutions. 
This reduces the problem to checking the weak equivalence on the level of model categories, 
where we can apply (\citep{Dwyer_Kan_1980}, Proposition 5.4) 
to conclude that the induced map of mapping spaces is an isomorphism. 
\\
\indent We now have that the unit transformation for the adjunction of 
quasicategories can be taken to be the unit transformation at the level of categories 
(more precisely, that the component at any object $X$ is the arrow $X \to (\ust \circ \st)(X)$). 
Hence the canonical cosimplicial resolution induced by the 
adjunction of quasicategories is equivalent to the canonical 
cosimplicial resolution induced by the adjunction of model categories, 
and therefore addressing the question of whether it is a 
limit diagram in the quasicategory $\qsset$ reduces to the 
question of whether it is a homotopy limit diagram in the 
simplicial model category $\sset$, justifying our usage of 
Theorem \ref{thm:main-theorem-basic} to prove Theorem \ref{thm:main}.

\bibliographystyle{plainnat}

\end{document}